\newcommand{\GradientProductError}[2]{{#1}^{3}#2^2\nu(#1,N)}
\newcommand{\LinearError}[2]{#1\err_{\operator;#1}#2}
\newcommand{\ProductError}{q^2\|f\|_\infty^2|\lambda_{q}|^{\nicefrac{(d-1)}{2}}\nu(r,N)}
\newcommand{\QuadraticError}{r^{4}\|f\|_\infty^2(\nu(r,N)+\err_{\operator;r})}
\newcommand{\DoubleGradientError}{k^2\|f\|_\infty(1+k\|f\|_\infty\nu(k,N))}
\newcommand{\ApproximationError}{r^{4}\|f\|_\infty^2(\nu(r,N)+\err_{\operator;r}+\delta)}
\newcommand{\MISE}{r^{4}\diam_M^2(\nu(r,N)+\err_{\operator;r}+\delta)}
\newcommand{\LowerBoundError}[1]{r^{\nicefrac{(9d+5)}{2d}}\|#1\|^2_\infty((\delta(1+\|g\|_\infty^{-1})+\err_{\operator;r})+r^{\nicefrac{(d-5)}{2d}}\nu_{r;N})}
\newcommand{\IntroMISE}{r^{4}\diam_M^2(\sum_{\ell=r}^\infty\ell^{-N}+\err_{\hat{\graphlaplacian};r}+\delta)}
\newcommand{\dirac}[2]{\nabla_{#1;#2}}
\newcommand{\err}{\epsilon} 
\newcommand{\rank}{\mathrm{rk}} 
\newcommand{\operator}{T} 
\newcommand{\cutoff}{\tau} 
\newcommand{\graphlaplacian}{L} 
\newcommand{\diam}{\mathrm{diam}} 
\newcommand{\R}{\mathbb{R}} 
\newcommand{\half}{\nicefrac{1}{2}}
\newcommand{\ra}{\rightarrow}
\newcommand{\xra}{\xrightarrow}
\newcommand{\estimator}[1]{{\hat{d}}_{#1}}
\theoremstyle{definition}
\newtheorem{thm}{Theorem}
\newtheorem{eg}{Example}
\newtheorem{defn}[thm]{Definition}
\newtheorem{cor}[thm]{Corollary}
\newtheorem{lem}[thm]{Lemma}
\newtheorem{prop}[thm]{Proposition}
\newtheorem*{thm:spectral.convergence}{Theorem 2.1, \cite{eigenmap}}
\newtheorem*{cor:stochastic}{Corollary \ref{cor:stochastic}} 
\newtheorem*{thm:blur}{Theorem \ref{thm:blur}} 
\newtheorem*{thm:convergence}{Theorem \ref{thm:convergence}} 
\newtheorem*{thm:cutoff.convergence}{Theorem \ref{thm:cutoff.convergence}}
\newtheorem*{thm:connes.distance.formula}{Connes' Distance Formula}
\newtheorem*{cor:connes.distance.formula}{Connes' Distance Formula}
\newtheorem*{thm:weyl.law}{Weyl's Law}
\newcommand{\dist}{\mathrm{dist}}
\newcommand{\optimization}[2]{
\begin{tabular}{rl}
  \label{eqn:estimator}
  {\bf maximize} & $\left(\half\|\graphlaplacian((v^{\odot 2})_{(r)})-2(v\odot Lv)_{(r)}\|_\infty\right)^{-\half}|v_a-v_b|$\\
  {\bf under constraints} & $v=v_{(q)}$\\
\end{tabular}\\[.1cm]
}
\begin{document}

\begin{frontmatter}
\title{Non-Parametric Manifold Learning}
\runtitle{Manifold Learning}
\begin{aug}
\author{Dena Marie Asta}
\address{Department of Statistics\\ Ohio State University\\ Columbus, OH\\ 43210 USA\\
\href{Url}{dasta@stat.osu.edu}}
\runauthor{D. M. Asta}
\end{aug}

\begin{abstract}
 We introduce an estimator for distances in a compact Riemannian manifold based on graph Laplacian estimates of the Laplace-Beltrami operator.  
  We upper bound the error in the estimate of manifold distances, or more precisely an estimate of a spectrally truncated variant of manifold distance of interest in non-commutative geometry (cf. [Connes and Suijelekom, 2020]), in terms of spectral errors in the graph Laplacian estimates and, implicitly, several geometric properties of the manifold.
  A consequence is a proof of consistency for (untruncated) manifold distances. 
  The estimator resembles, and in fact its convergence properties are derived from, a special case of the Kontorovic dual reformulation of Wasserstein distance known as Connes' Distance Formula.
\end{abstract}

\end{frontmatter}

\begin{keyword}[class=AMS]
\kwd{62G05; 62R30}
\end{keyword}

\begin{keyword}
\kwd{Manifold Learning}
\kwd{Consistency}
\kwd{Graph Laplacian}
\kwd{Connes' Distance Formula}
\kwd{Laplace-Beltrami Operator}
\kwd{Wasserstein Distance}
\end{keyword}

\section{Introduction}
\label{sec:intro}
Data in a variety of domains is most naturally regarded as drawn from a distribution on a possibly non-Euclidean space.  
For example, electrical impedances of capacitors are most naturally studied as points in a negatively curved hyperboloid \citep{huckemann2010mobius}, radar bearings are most naturally studied as points in the space of $(3\times 3)$ special orthogonal matrices \citep{radar-so3}, and diffusion tensoring images are most naturally studied as points in the space of semidefinite positive $(3\times 3)$-matrices \citep{Rahman-et-al}.  
In many situations, that space $M$ is unknown but is embedded in some Euclidean space $\R^N$; thus while the samples are just ordinary vectors $X_1,X_2,\ldots,X_n\in\mathbb{R}^N$, latent dependencies between the covariates are encoded as the restriction of the support of the generating distribution to $M\subset\R^N$.  
For example, the sample data might consist of coordinates in $\R^3$ for some points of a smooth but unknown $3$-dimensional object $M$ scanned by a digital camera.  
For another example, the sample data might encode $k$ covariates for each node in a social network (eg. age, height, weight for a friendship network), in which case the geometry of the subspace $M$ would then encode hidden dependencies between those covariates.  

In this setup, the identification of $M$ as a geometric object is an important, if not the most important, part of statistical inference.  
As an intrinsic geometric object, $M$ is a metric space in which the intrinsic distance $\dist_M(x,y)$ between a pair of points $x,y\in M$ is the length of the shortest path connecting $x$ to $y$ \textit{within} $M$.  
In fact, link generation in statistical networks is naturally modelled as a function of intrinsic distance between nodes in a latent space $M$ \citep{McCormick_Zheng2015}, \cite{Krioukov-et-al-hyperbolic-geometry}, \cite{geometric-network-comparison}; the embedding of the latent space $M$ into some $\mathbb{R}^N$ is then interpretable as extra data of covariates on the nodes.  
In the case the latent space $M$, and, often also its estimates, are smooth (manifolds), estimation of $M$ is called \textit{manifold learning} (eg. \cite{genovese2012manifold}).
The identification of the latent space $M$ is a form of non-linear dimensionality reduction in the case $\dim\,M\ll N$, with substantial applications in classification \citep{manifold-mds, mds, ramsay1982some, example-manifold-distances-in-ML}, prediction \citep{regression-on-manifolds}, and visualization.

In the non-parametric setting, the estimate itself is usually not a practical but instead an important theoretical tool.
For example, a non-parametric smooth estimate of a manifold $M$ based on sample points $X_1,X_2,\dots,X_n$ uniformly drawn from $M$ would be a space with infinitely many points \citep{genovese2012manifold}.
Dropping the requirement that the estimates themselves be smooth, we can simply estimate $M$ as a \textit{finite} metric space of its samples $\{X_1,X_2,\ldots,X_n\}$ equipped with an estimate of its $\nicefrac{n(n-1)}{2}$ pairwise intrinsic distances.  
Even this finite data is impractically large for $n\gg 0$.
Nonetheless, non-parametric estimates of $M$ are sufficient statistics for quantities of increasing and practical interest in the applications, such as curvature, volume, diameter, genus, or more general \textit{topology} (numbers and dimensions of holes) of $M$.
For example, the curvature of $M$ can be estimated from the eigenvalues of the $(n\times n)$-matrix of estimated distances between samples.  
In this manner, for example, upper bounds on convergence rates for non-parametric manifold estimators give constraints on all sorts of geometric estimators.  

The general method of estimation currently in the literature is to approximate the latent space $M$ in terms of the data and then computing intrinsic distances in that reconstructed space.  
Traditionally, $M$ is approximated by a network whose nodes are the sample points $X_1,X_2,\ldots,X_n$, edges are pairs of nodes sufficiently close to another according to a \textit{bandwidth parameter}, and edge weights are extrinsic Euclidean distances; shortest path lengths then give distance estimates (eg. \cite[Main Theorem B]{bernstein2000graph}.)
While it is known that the error of such an estimate can be bounded in terms of the bandwidth parameter and a bound on the curvature of a latent $C^2$-manifold \citep{arias2019unconstrained}, it is unknown whether there is a choice of bandwidths $h_n\ra 0$ making the distance estimates converge to the true distances \textit{in probability} (cf. \cite[Main Theorem B]{bernstein2000graph}.)  
In the special case that the latent space $M$ is a \textit{smooth surface}, then $M$ can be approximated by a \textit{mesh} consisting of nodes, edges, and polygons in Euclidean space; shortest path lengths in these reconstructed polytopes then give distance estimates.
For optimal mesh constructions, the resulting distance estimates are minimax among all distance estimates \citep{arias2020minimax}.
But optimal mesh constructions involve solving non-convex optimization problems.
And to the best knowledge of the author, there does not currently exist a data-driven technique for choosing bandwidths.  

This paper introduces a new type of distance estimator, with the following advantages.  
Firstly, the estimator is applicable to smooth compact latent spaces of arbitrary dimension and not merely surfaces.  
Secondly, the estimator admits a simple mathematical formulation that can be easily plugged into other estimators.   
Thirdly, the estimator can be shown to converge \textit{in probability} to a spectrally truncated variant of distance, of intrinsic interest in non-commutative geometry \citep{connes2021spectral}. 
Simulations suggest [Figure \ref{fig:circle.loss}] the estimator likely converges to (untruncated) manifold distances as the truncation parameter increases without bound.  
Error bounds in terms of manifold geometry and Laplacian estimation errors are additionally given, although risk convergence rates and convergence to (untruncated) distances require bounding convergence rates of spectrally truncated distances \citep{connes2021spectral}.  
Fourthly, the estimator outsources the problem of bandwidth estimation by taking as its input a consistent graph Laplacian for the Laplace-Beltrami operator (eg. \citep{eigenmap-convergence,spectral-exterior-calculus,laplacian-convergence-with-different-kernels}) on the manifold; substantial research has already gone into solving the specific problem of data-driven bandwidth estimation for graph Laplacians.  
And lastly, this paper highlights more generally how to recover the complete geometry of a manifold in the limit from graph Laplacians based on random samples (cf. \cite{eigenmap, diffusion-distances, laplacian-graphics}.)

The estimator resembles, and is based upon, \textit{Connes' Distance Formula} 
\begin{equation}
  \label{eqn:connes.distance.formula}
  \dist_M(x,y)=\sup\;\!\!_{\|\nabla_M f\|\leq 1}|f(x)-f(y)|
\end{equation}
a supremum over smooth functions $M\rightarrow\mathbb{R}$ instead of as an infimum over paths in $M$ \citep{connes-distance-formula}.
In this manner, the Connes' Distance Formula motivates operator-based definitions of distances between quantum states, through which it does not make sense to talk about smooth paths.
It is natural that Connes' Distance Formula also motivates estimators for distances between sample points, through which it also does not make sense to talk about smooth paths.
In fact, Connes' Distance Formula naturally suggests a generalization $\dist_{M;q}$ \citep{connes2021spectral} of distance under limited resolution of the manifold $M$, where the $f$'s are required to having vanishing $i$th Fourier coefficients for $i>q$.
As the resolution improves, the manifold comes into focus [Theorem \ref{thm:blur}].  
One insight from our work is that stochastic convergence requires that in the discrete analogue of Connes' Distance Formula, one must choose suitable parameters $q<r$ so that various vectors must be projected onto either the first $r$ eigenspaces or the first $q<r$ eigenspaces, depending on whether those vectors are being used linearly or quadratically.  

\newcommand{\blur}{  
  For each fixed pair $x,y\in M$,
  $$\left(\dist_{M;q}(x,y)-{\mathrm{dist}_M(x,y)}\right)\xra{q\rightarrow\infty}0.$$
}

\newcommand{\convergence}{  
  Fix natural numbers $n,N$.
  Then
  $$|\estimator{\graphlaplacian;q;r}(x_a,x_b)-\mathrm{dist}_{M;q}(x_a,x_b)|=\mathcal{O}(\IntroMISE{}),$$
  for points $x_1,\ldots,x_n\in M$ with covering radius $\delta$ and finite rank negative semidefinite operator $\graphlaplacian$ on $C^{\infty}(x_1,\dots,x_n)$, where $q$ is the maximum natural number such that $2|\lambda_{q;\graphlaplacian}|<|\lambda_{r;\graphlaplacian}|$ where $\lambda_{0;\graphlaplacian}\geq\lambda_{1;\graphlaplacian}\geq\cdots\lambda_{n;\graphlaplacian}$ are the eigenvalues of $\graphlaplacian$, for $\delta,\err_{\graphlaplacian;r}$ sufficiently close to $0$ and $r$ sufficiently large.  
}


\newcommand{\stochastic}{  
  We have that
  $$|\estimator{\graphlaplacian_n;q;r_n}(X_a,X_b)-\mathrm{dist}_{M;q}(X_a,X_b)|\xrightarrow{n\rightarrow 0} 0\quad a.s.$$
  for samples $X_1,X_2,\ldots$ equidistributed from a smooth density on $M$ bounded away from $0$ and for each $n$, a linear operator $\graphlaplacian_n$ on $C^{\infty}(X_1,\ldots,X_n)$ such that $\err_{\graphlaplacian_n;i}\ra 0$ as $n\rightarrow\infty$ almost surely for each $i$, where $r_n\leq n$ is a suitable choice of natural number for each $n$ such that $r_n\ra\infty$.
}


The distance estimator, like the aforementioned mesh-based minimax estimator, is formulated as the solution to a non-convex optimization problem.   
We can empirically observe stochastic convergence by replacing both the spectrally truncated manifold distances and their estimates with oracle heuristics to get an approximate empirical risk plot [Figure \ref{fig:circle.loss}].
One important continuation of the current research is to develop computable statistics, like curvature, that are continuous in our distance estimator and therefore automatically consistent.
Recent work has shown that curvature can be directly estimated from samples uniformly drawn from an embedded manifold and Euclidean distances between them \citep{Krioukov-et-al-Ollivier-Ricci}.  
It would be interesting to relate such an estimator for curvature with our distance estimator, based on standard formulas relating the Laplace-Beltrami operator with Ricci curvature.
Another important continuation is to study convex relaxations of the estimator and the subsequent tradeoffs between convergence rates and computability.

\section{Conventions}
For an operator $\operator$ with discrete non-positive spectrum, 
$$\lambda_{1;\operator}\geq\lambda_{2;\operator}\geq\dots$$
will denote the set of \textit{non-zero} eigenvalues of $\operator$, including multiplicities, let $E_{i;\operator}$ denote the eigenspace of $\operator$ corresponding to eigenvalue $\lambda_i$, and let $\rank_{\operator}$ denote the rank of $\operator$.  
We write $\|-\|$ for the usual vector norm in Euclidean space.  
An \textit{extended norm} on a vector space is defined just like an ordinary norm, except that it can possibly take the value $\infty$. 
For a sequence of distinct points $x_1,x_2,\dots,x_n$ in some set, write $C^{\infty}(x_1,\dots,x_n)$ for the vector space of all functions $\{x_1,\dots,x_n\}\ra\R$ and make the identification
$$C^{\infty}(x_1,\ldots,x_n)\cong\R^n$$
under the linear isomorphism sending the characteristic function on $x_i$ to the $i$th standard basis vector. 
We say that an operator $\hat{\operator}$ on $C^\infty(M)$ \textit{extends} an operator $\operator$ on $C^\infty(x_1,\ldots,x_n)$ if $\hat\operator$ sends $f$ to a smooth function whose restriction is the function that $\operator$ sends a restriction of $f$.  
For a negative semidefinite operator $\operator$ on $C^{\infty}(x_1,\ldots,x_n)$ with a negative semidefinite extension $\hat\operator$ with the same rank, the eigenvalues of $\operator,\hat\operator$ will coincide and the eigenfunctions of $\hat\operator$ are just extensions of the eigenfunctions of $\operator$.  

\section{Background}\label{sec:background}
There is extensive literature on learning some distance-like structure from a set of sample points (cf. \cite{distance-learning}.).  
This paper is concerned with a special but important case where the sample points come from $\R^n$ but are secretly restricted to some unknown subspace $M\subset\R^n$; the subspace can be essentially characterized by the intrinsic shortest path lengths in $M$ between the sample points.  
In the case where $M$ is smooth and compact, the intrinsic manifold distance $\dist_M$ is also expressible in terms of a first-order differential \textit{Dirac} operator on smooth functions and more general \textit{smooth forms} on $M$ called \textit{Connes' Distance Formula}.  
While there is a proposed discrete analogue of Connes' Distance Formula for graphs \citep{connes-distance-formula-for-graphs}, it is not clear whether that analogue converges in any statistical sense.  
An estimator of one Dirac operator, the \textit{Hodge-Dirac} operator, can be constructed for samples $X_1,\dots,X_n$ on $M$ \citep{spectral-exterior-calculus}.
To the best knowledge of the author, a corresponding plug-in estimator for distances has not yet been investigated.
However, the cited estimator for the Hodge-Dirac operator is based on a more fundamental estimate of a second-order differential \textit{Laplace-Beltrami} operator.  
Moreover, Connes' Distance Formula turns out to be reformulated purely in terms of the Laplace-Beltrami operator.
We will focus on the relevant background needed to understand Connes' Distance Formula, the Laplace-Beltrami operator, and their estimates by graph Laplacians.    

\subsection{Geometry}\label{subsec:geometry}
We recall almost all relevant concepts in Riemannian Geometry purely in terms of multivariate calculus and linear algebra and make no explicit mention of \textit{Riemannian metrics} or, for that matter, charts. 
We refer the interested reader to other sources for the general theory (eg. \cite{geometry}) and rigorous definitions of gradient functions and integration.  
In this spirit, we take a \textit{smooth $d$-dimensional submanifold of $\R^{n+d}$} to be the preimage of $0$ under a smooth function $\R^{n+d}\ra\R^{n}$ which does not map any critical point to $0$. 

\begin{eg}
  An example is the unit $n$-dimensional sphere
  $$\mathbb{S}^n=\{x\in\R^{n+1}\;|\;\|x\|=1\}\subset\R^{n+1}$$
\end{eg}

Let $M$ be a compact, connected smooth $d$-dimensional submanifold of $\R^{n+d}$.
A function $f:M\ra\R$ is \textit{smooth} if $f\circ i:\R^m\ra\R$ is smooth for all smooth functions $i:\R^m\ra\R^{n+d}$ with image in $M$.  
Write $C^{\infty}(M)$ for the vector space of all smooth real-valued functions $M\ra\R$.
For each $f\in C^{\infty}(M)$, define
$$\|f\|_{\infty}=\sup_{x\in M}|f(x)|.$$

For $x,y\in M$, write $\dist_M(x,y)$ for the intrinsic manifold distance 
\begin{equation}
  \label{eqn:shortest.path.length}
  \dist_M(x,y)=\inf_{\gamma}\int_0^1\|\gamma'(t)\|\,dt
\end{equation}
where the infimum is taken over all smooth paths $\gamma:\mathbb{I}\ra\R^p$ from $x$ to $y$ whose image lies in $M$.
The \textit{diameter} of $M$ is $\sup_{x,y\in M}\dist_M(x,y)$.
The \textit{covering radius} of a subset $X\subset M$ is the maximum distance $\sup_{y\in M}\inf_{x\in X}\dist_M(x,y)$ between a point in $M$ and $X$.  
An \textit{isometry} $\varphi:U\ra M$ from an open subset $U\subset\R^q$ is a smooth function $U\ra\R^p$ with image in $M$ such that $\dist_M(\varphi(x),\varphi(y))=\|x-y\|$ for all $x,y\in U$.  

let $\int_Mf$ denote the integral of the function $f$ with respect to the volume form that $M$ inherits as a submanifold of $\R^{n+d}$.  
Write $\langle f,g\rangle$ for the \textit{$L_2$-inner product} of a pair of functions $f,g\in C^{\infty}(M)$, the integral
$$\langle f,g\rangle=\int_Mfg.$$

The \textit{volume} of $M$ is the $\mathrm{vol}_M=\int_M1$.

\begin{eg}
  For the unit circle $\mathbb{S}^1\subset\R^2$, we have that
  \begin{equation*}
	  2\mathrm{diam}_{\mathbb{S}^1}=\mathrm{vol}_{\mathbb{S}^1}=2\pi.
  \end{equation*}
\end{eg}

For each $f\in C^{\infty}(M)$, let $\nabla_Mf$ denote the gradient function
$$\nabla_Mf:M\rightarrow\R^d$$
associated to $f$, sending each point $x\in M$ to the $d$-vector tangent to $M$ regarded as a vector in $\R^d$ by identifying all of the tangent spaces of the $d$-dimensional submanifold $M$ of $\R^{n+d}$ with $\R^d$.  
The function $d_x=\dist_M(x,-):M\ra\R$ satisfies $|d_x(x)-d_x(y)|=\dist_M(x,y)$.
But while $d_x$ is smooth almost everywhere and satisfies $\|\nabla_Md_x\|\leq 1$ wherever $\nabla_Md_x$ is defined, $d_x$ is not smooth everywhere.  
However, $d_x$ can be uniformly approximated by smooth functions whose gradients are bounded by $1$ \citep{smooth-approximations-of-lipschitz-functions}.
The formula for distances below follows.

\begin{thm:connes.distance.formula}
  For $x,y\in M$, 
  $$\mathrm{dist}_M(x,y)=\sup_{f}|f(x)-f(y)|,$$
  where the supremum is taken over all $f\in C^{\infty}(M)$ such that $\|\nabla_M f\|_\infty\leq 1$.  
\end{thm:connes.distance.formula}

The proof follows by noting that the left side upper bounds the right side by the Mean Value Theorem and the left side lower bounds the right side by taking smooth $1$-Lipschitz approximations $f$ of the $1$-Lipschitz function $\dist_M(x,-):M\rightarrow\mathbb{R}$.  

\subsection{Laplace-Beltrami Operator}
We will take the \textit{Laplace-Beltrami operator} $\Delta_M$ for $M$ to mean the negative definite operator on $C^{\infty}(M)$ (instead of the larger vector space of $L_2$-integrable functions) characterized by the following identity for all $f,g\in C^{\infty}(M)$:
$$\int_Mf\Delta_Mg=-\int_M\nabla_M f\cdot\nabla_M g$$

The operator $\Delta_M$ acts on products by the formula
$$\Delta_M(fg)=f\Delta_Mg+g\Delta_Mf+2(\nabla_M f\cdot\nabla_M g).$$

It turns out that we can actually recover $\|\nabla_Mf\|$ from $\Delta_M$, $f$, and $f^2$.

\begin{prop}
  \label{prop:laplacian.gradient}
  For each $f\in C^{\infty}(M)$ and $x\in M$,
  \begin{equation}
	\label{eqn:laplacian.gradient}
    \|\nabla_M f\|^2=\half\Delta_Mf^2-f\Delta_Mf:M\ra\R.
  \end{equation}
\end{prop}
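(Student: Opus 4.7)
The statement is an immediate algebraic consequence of the product formula for $\Delta_M$ already recorded in the excerpt, namely
\[
\Delta_M(fg) = f\,\Delta_M g + g\,\Delta_M f + 2(\nabla_M f\cdot\nabla_M g),
\]
so there is essentially no obstacle: the plan is simply to specialize this identity to the case $g=f$ and then solve for $\|\nabla_M f\|^2$.

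Concretely, I would first substitute $g=f$ into the product formula to obtain
\[
\Delta_M(f^2) \;=\; 2f\,\Delta_M f + 2(\nabla_M f\cdot\nabla_M f).
\]
Next, I would note that by definition of the Euclidean dot product on the tangent vector $\nabla_M f(x)\in\R^d$, one has $\nabla_M f\cdot\nabla_M f = \|\nabla_M f\|^2$ pointwise on $M$. Substituting this identification gives
\[
\Delta_M(f^2) \;=\; 2f\,\Delta_M f + 2\|\nabla_M f\|^2.
\]
Finally, I would solve for $\|\nabla_M f\|^2$, dividing by $2$ and rearranging, to arrive at the claimed identity
\[
\|\nabla_M f\|^2 = \tfrac{1}{2}\Delta_M f^2 - f\,\Delta_M f
\]
of smooth functions $M\to\R$, valid at every $x\in M$.

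Since the product rule for $\Delta_M$ is taken as given in the excerpt (it is stated just above the proposition), the only thing to verify is the pointwise identification of $\nabla_M f\cdot\nabla_M f$ with $\|\nabla_M f\|^2$, which is tautological given the convention in the excerpt that $\nabla_M f$ takes values in $\R^d$ with the standard Euclidean norm. Hence there is no substantive technical difficulty; the proposition is essentially a restatement of the Leibniz rule for $\Delta_M$ applied to a square.
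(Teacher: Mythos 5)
Your proposal is correct and follows essentially the same route as the paper: both specialize the stated Leibniz rule $\Delta_M(fg)=f\Delta_Mg+g\Delta_Mf+2(\nabla_M f\cdot\nabla_M g)$ to $g=f$ and rearrange to isolate $\|\nabla_M f\|^2$. No gaps.
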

\begin{proof}
  Observe that
  $$\half\Delta_Mf^2-f\Delta_Mf=\half(2f\Delta_Mf+2\|\nabla_M f\|^2)-f\Delta_Mf=\|\nabla_M f\|^2.$$
\end{proof}

We can then reformulate Connes' Distance Formula purely in terms of $\Delta_M$.

\begin{cor:connes.distance.formula}
  For $x,y\in M$, 
  $$\mathrm{dist}_M(x,y)=\sup_{f}|f(x)-f(y)|,$$
  where the supremum is taken over all $f\in C^{\infty}(M)$ satisfying $\|\half\Delta_Mf^2-f\Delta_Mf\|_\infty\leq 1$.  
\end{cor:connes.distance.formula}

The eigenvalues of $\Delta_M$ form a discrete subset of non-positive real numbers because $M$ is compact.

\begin{eg}
  The non-zero eigenvalues of $\Delta_{\mathbb{S}^1}$, counting multiplicities, are
  $$-1,-1,-2,-2,-3,-3,\ldots$$
  Respective choices of eigenfunctions are the functions sending $(\cos\,\theta,\sin\,\theta)\in\mathbb{S}^1$ to $\sin\,\theta$, $\cos\,\theta$, $\sin\,2\theta$, $\cos\,2\theta$, \dots
\end{eg}

Let $\lambda_i=\lambda_{i;\Delta_M}$. 
Fix a choice $e_1,e_2,\dots$ of eigenfunctions for $\Delta_M$ that are orthonormal with respect to the $L_2$-inner product such that $e_i\in E_{\lambda_i}\Delta_M$.  
For each $f\in C^{\infty}(M)$, write $\hat{f}_i$ for the \textit{$i$th Fourier coefficient} 
$$\hat{f}_i=\langle f,e_i\rangle.$$

For each $f\in C^{\infty}(M)$ and $k=1,2,\ldots$, let $f_{(k)}$ be the Fourier partial sum
$$f_{(k)}=\sum_{i=1}^k\hat{f}_ie_i.$$

For each $f\in C^{\infty}(M)$, $f=\sum_{k=1}^\infty\hat{f}_ie_i$ in the sense that 
$$\|f-f_{(k)}\|_\infty\xra{k\ra\infty}0.$$

In fact, $f_{(k)}\ra f$ in the order $1$ uniform Sobolov norm in the sense that also
$$\|\nabla f-\nabla f_{(k)}\|_\infty\xra{k\ra\infty}0.$$

Connes Distance Formula suggests the following natural generalization of distance under imperfect resolution as determined by a parameter $q$.  

\begin{defn}
  \label{defn:blurred.distance}
  For each $x,y\in M$, let
  $$\dist_{M;q}(x,y)=\sup_{f}|f(x)-f(y)|,$$
  over all $f\in C^{\infty}(M)$ such that $\|\nabla_M f\|_\infty\leq 1$ and $\hat{f}_i=0$ for all $i>q$.  
\end{defn}

\begin{thm}
  \label{thm:blur}
  \blur{}
\end{thm}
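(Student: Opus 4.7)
The plan is to establish the limit as a two-sided inequality. The upper bound $\dist_{M;q}(x,y) \leq \dist_M(x,y)$ is immediate: the supremum defining $\dist_{M;q}$ is taken over a subset (imposing the additional Fourier-vanishing constraints $\hat{f}_i=0$ for $i>q$) of the functions over which the supremum defining $\dist_M$ is taken, via the Connes' Distance Formula. So it suffices to show $\liminf_{q\to\infty}\dist_{M;q}(x,y)\geq \dist_M(x,y)$.

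For the lower bound, fix $\varepsilon>0$ and, using Connes' Distance Formula, pick $f\in C^\infty(M)$ with $\|\nabla_M f\|_\infty\leq 1$ and $|f(x)-f(y)|>\dist_M(x,y)-\varepsilon$. The natural candidate for the truncated problem is the Fourier partial sum $f_{(q)}$, which by construction satisfies $\hat{(f_{(q)})}_i=0$ for $i>q$. However, $f_{(q)}$ may fail the gradient constraint: we only know from the order-$1$ uniform Sobolev convergence stated in the excerpt that $\|\nabla_M f_{(q)}\|_\infty\to\|\nabla_M f\|_\infty\leq 1$, not that it stays bounded by $1$. To fix this, I would rescale and set
$$g_q \;=\; \frac{f_{(q)}}{\max\bigl(1,\,\|\nabla_M f_{(q)}\|_\infty\bigr)}.$$
Then $g_q$ is admissible for the supremum defining $\dist_{M;q}(x,y)$, since scaling preserves vanishing of the higher Fourier coefficients and guarantees $\|\nabla_M g_q\|_\infty\leq 1$.

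Finally, I would use the two uniform convergences stated right before the theorem statement: $\|f-f_{(q)}\|_\infty\to 0$ and $\|\nabla_M f-\nabla_M f_{(q)}\|_\infty\to 0$ as $q\to\infty$. The first gives $|f_{(q)}(x)-f_{(q)}(y)|\to |f(x)-f(y)|$, and the second gives $\max(1,\|\nabla_M f_{(q)}\|_\infty)\to \max(1,\|\nabla_M f\|_\infty)=1$. Combining,
$$\dist_{M;q}(x,y)\;\geq\;|g_q(x)-g_q(y)|\;=\;\frac{|f_{(q)}(x)-f_{(q)}(y)|}{\max(1,\|\nabla_M f_{(q)}\|_\infty)}\;\xrightarrow{q\to\infty}\;|f(x)-f(y)|\;>\;\dist_M(x,y)-\varepsilon.$$
Since $\varepsilon$ is arbitrary, the liminf is at least $\dist_M(x,y)$, completing the proof.

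The only delicate point, and where I expect the main obstacle to lie, is the possibility that $\|\nabla_M f_{(q)}\|_\infty>1$ for every $q$; the rescaling trick handles this cleanly because the cited order-$1$ Sobolev convergence forces the rescaling factor to tend to $1$, so it contributes no loss in the limit. Everything else is just pointwise evaluation continuity ($\|f-f_{(q)}\|_\infty\to 0$ in particular implies convergence at $x$ and $y$) and the general approximation-by-smooth-functions machinery already invoked in the proof of Connes' Distance Formula.
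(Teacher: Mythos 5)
Your proposal is correct and takes essentially the same approach as the paper: truncate a smooth near-optimal function to its $q$th Fourier partial sum, rescale by $\max(1,\|\nabla_M f_{(q)}\|_\infty)$ to restore admissibility, and use the uniform and order-$1$ Sobolev convergence of partial sums together with the trivial bound $\dist_{M;q}\leq\dist_M$. The only difference is cosmetic: you fix a single $\varepsilon$-near-maximizer supplied by Connes' Distance Formula, whereas the paper diagonalizes over a sequence of smooth $1$-Lipschitz approximations of $\dist_M(x,-)$ truncated at level $i$; your variant applies the partial-sum convergence to one fixed smooth function, which if anything is the cleaner way to run the same argument.
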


The precise rate of convergence is dependent on particular geometric features of $M$, such as its local dimension $d$, curvature, volume, and conjugate pairs of points, and even the particular choice of pairs of points $x,y$. 

\begin{proof}
  Let $d$ be the $1$-Lipchitz function
  $$d=\dist_M(x,-):M\rightarrow\mathbb{R}.$$ 

  There exists a sequence $f_1,f_2,\ldots:M\rightarrow\mathbb{R}$ of smooth $1$-Lipschitz functions that uniformly converge to $d$.  
  Let $g_i=(f_i)_{(i)}$.  
  Then $g_1,g_2,\ldots$ uniformly converges to $d$.  
  Moreover $\|\nabla g_n-\nabla f_n\|\rightarrow 0$ as $n\rightarrow\infty$ because partial Fourier sums of smooth functions converge in the Sobolev norm to the infinite Fourier series.  
  Let $h_n=\max(\|\nabla g_n\|_\infty,1)^{-1}g_n$.  
  Then $h_1,h_2,\ldots$ are smooth functions that uniformly converge to $d$.  
  And $\|\nabla h_n\|_\infty\leq 1$ for each $n$ by construction.  
  And $|h_q(x)-h_q(y)|\leq\dist_{M;q}(x,y)\leq\dist_M(x,y)$ for each $q$.  
  Therefore the result follows.  
\end{proof}

Let $c_{ijk}$ denote the Riemannian \textit{triple product}
$$c_{ijk}=\langle e_ie_j,e_k\rangle=\int_Me_ie_je_k$$
the $k$th Fourier coefficient of the product function $e_ie_j$.
In the Appendix, we will review facts about Fourier coefficients, including certain decay rates for triple products [Theorem \ref{thm:triple.products}].
These decay rates imply a high concentration of the Fourier coefficients for $\Delta_Mf^2$ lie in a range determined by where the Fourier coefficients of $f$ are concentrated [Lemma \ref{lem:product.error}].
This is the key observation that allows for a consistent, discrete analogue of the non-linear operator $\|\nabla_M\|^2$ defined by (\ref{eqn:laplacian.gradient}), in terms of discrete estimates of $\Delta_M$.

\subsection{Distributions}\label{subsec:distributions}
A \textit{smooth density} on $M$ is a function $f\in C^{\infty}(M)$ such that
$$\int_Mf=1\quad f\geq 0.$$

A sequence of random quantities $X_1,X_2,\dots$ on $M$ is \textit{equidistributed} from a smooth density $f$ on $M$ if the averages of the first $n$ Dirac point-masses of $X_1,X_2,\dots$ weakly converges to $f$ in the sense that for each $g\in C^{\infty}(M)$, the following holds almost surely:
$$\lim_{n\ra\infty}\frac{1}{n}\sum_{i=1}^ng(X_i)=\int_{M}fg$$

\subsection{Geometric Estimation}\label{subsec:graph.laplacians}
There are a number of estimators for various features of a manifold.  
We restrict ourselves to estimators for distance, gradients, and Laplacians.  

\subsubsection{Distance Estimation}
There are a number of methods for estimating those intrinsic distances themselves.
All of these estimators rely on the assumption that $M$ is smooth.
Smoothness guarantees that small Euclidean distances approximate small intrinsic manifold distances.  
For this reason, all latent distance estimators are defined in terms of a \textit{bandwidth} parameter $h>0$, an estimate of how fast extrinsic Euclidean distances converge to intrinsic manifold distances as the points get closer and closer.  
For instance, we can estimate $\dist_M(X_p,X_q)$ as the length of the shortest path from $X_p$ to $X_q$ in the \textit{sample network} defined by adding an edge from $X_i$ to $X_j$ with weight $\|X_i-X_j\|$ if $\|X_i-X_j\|<h$ and not connecting $X_i,X_j$ otherwise.
This estimate is known to be close to $\dist_M(X_p,X_q)$ with high probability under certain assumptions on the distribution in terms of the bandwidth $h$  and for manifolds $M$ which are \textit{isometric to a convex subset of Euclidean space} \cite[Main Theorem B]{bernstein2000graph}.
The latter condition on the latent space $M$ excludes manifolds with non-trivial geometry, such as compact manifolds or manifolds with non-trivial curvature.
An additional challenge is to give a data-driven way of actually choosing the bandwidth $h$.
The usual method of cross-validation does not apply because while there are multiple (often, but not always independent) sample nodes $X_1,X_2,\ldots$, there is usually only one given independent \textit{sample network}.

\subsubsection{Gradient Estimation}
Connes' Distance Formula demonstrates how the gradient fully encodes the geometry of $M$.
The \textit{incidence matrix} of a sample network is intuitively an estimate for the gradient.
However, the literature lacks convergence results formalizing this intuition, perhaps for reasons of technical convenience and efficiency.  
Firstly, incidence matrices are linear transformations defined between two distinct vector spaces of generally differing dimensions.
Secondly, the sizes of incidence matrices grow cubically in sample size.

\subsubsection{Laplacian Estimation}\label{subsubsec:graph.laplacians}
\textit{Graph Laplacians} are square matrices that have been proven to converge to the Laplace-Beltrami operator $\Delta_M$ in various senses. 
Unlike incidence matrices, graph Laplacians are operators on a single vector space and grow quadratically in sample size.  
These graph Laplacians, loosely, are defined as the negative semidefinite difference $A-D$ between an adjacency matrix $A$ and a diagonal degree matrix $D$. 
In the context of samples $X_1,X_2,\ldots,X_n\in\R^N$ restricted to an unknown latent subspace $M\subset\R^N$, our sample networks have edge weights defined by a kernel function and consequently the graph Laplacians have possibly non-integral values for their entries.  
In turn, the kernel function is parametrized by a bandwidth parameter $h$.  
Like before, the usual method of cross-validation cannot be used to tune the bandwidth $h$.  
Instead, some of the heuristics for choosing the bandwidth $h$ are based on estimates of various geometric features of the latent space $M$, like curvature.
However, an adaptation of Lepski's Method \cite{bandwidth-selection-for-graph-laplacian} actually comes with some theoretical guarantees for the graph Laplacian as an estimator for $\Delta_M$.  

Because graph Laplacians are so fundamental to our estimation process, we give an explicit construction when the kernel is a Gaussian.  
Consider points $x_1,\dots,x_n\in M\subset\R^N$.  
Let $L_{h;(x_1,x_2,\ldots,x_n);h}$ be the finite rank linear operator 
$$L_{h;(x_1,x_2,\ldots,x_n)}:C^\infty(x_1,\ldots,x_n)\ra C^{\infty}(x_1,\ldots,x_n)$$
defined by the following rule for all $f\in C^{\infty}(x_1,\ldots,x_n)$ and $1\leq i\leq n$:
$$(L_{h;(x_1,\dots,x_n)}f)(x_i)=(\nicefrac{\mathrm{vol}_M}{(2\sqrt{\pi}nh^{2+\dim_M})})\sum_{j=1}^ne^{-\frac{\|x_i-x_j\|^2}{4h^2}}f(x_j)$$

This operator uniquely extends to an operator 
$$\hat{L}_{h;(x_1,x_2,\ldots,x_n)}:C^\infty(M)\ra C^{\infty}(M)$$
with the same rank and same eigenvalues.  
The eigenfunctions of $\hat{L}_{h;(x_1,x_2,\ldots,x_n)}$ are ``out-of-sample'' smooth interpolations $M\ra\R$ of the eigenfunctions of $L_{h;(x_1,x_2,\ldots,x_n)}$.

We recall some convergence results.
Consider uniformly drawn samples 
$$X_1,X_2,X_3,\ldots\sim_{iid}M.$$
The operator $L_{h_n;(X_1,\dots,X_n)}$ converges to $\Delta_M$ in the operator norm (where the domain is equipped with an order $3$ uniform Sobolov norm and the range is equipped with the uniform norm) and $nh_n^{n+2}/\ln\,n\ra\infty$, $nh_n^{n+4}/\ln\,n\ra 0$ \citep{uniform-convergence-of-graph-laplacians}.  
For various choices of bandwidths $h_n\ra 0$, the $i$th out-of-sample eigenfunction $\hat{e}_{i;n}$ of $\hat{L}_{h_n;(X_1,\ldots,X_n)}$ can be chosen for each $n\geq i$ such that with high probability:
\begin{enumerate}
  \item $\|\hat{e}_{i;n}-e_{i}\|_\infty=\mathcal{O}\left(n^{-\nicefrac{2}{(5d+6)(d+6)}}\right)$ \citep[Corollary 3.3]{wang2015spectral}
  \item for fixed $r$, $\max_{1\leq i\leq r}|\hat{e}_{i;n}(X_j)-e_{i;n}(X_j)|\ra 0$ as $n\ra\infty$ at a rate that depends on our choice of $r$ and manifold geometry \citep[Theorem 2, Expression (10) and Remarks 4, 5]{dunson2021spectral}. 
\end{enumerate}

For suitable choices of $h_n\ra 0$ and the $\hat{e}_{i;n}$'s as before, the latter convergence \citep{dunson2021spectral} can be refined by methods from the former convergence \citep{wang2015spectral} to yield an $L_\infty$-convergence 
$$\max_{1\leq i\leq r}\|\hat{e}_i-e_{i}\|_\infty\xra{n\rightarrow\infty}0$$


%

Thus motivated, we define a notion of spectral deviation $\err_{\operator;r}$ in terms of a general negative semidefinite linear operator $\operator$ on $C^\infty(M)$ whose non-zero eigenvalues are $\lambda_{1;\operator}\geq\lambda_{2;\operator}\geq\cdots$, as follows.  
Let $e_{i;\operator}$ denote (a choice of) $i$th eigenvector of $\operator$ corresponding to $\lambda_{i;\operator}$. 

\begin{defn}
  \label{defn:spectral.errors}
  For each operator $\operator$ on $C^\infty(x_1,\ldots,x_n)$, let
  $$\err_{\operator;r}=\sup_{\hat\operator}\max_{1\leq i\leq r}\max\left(|\lambda_{i;\hat{\operator}}-\lambda_i|,\|e_{i;\hat{\operator}}-e_i\|_\infty\right),$$
  where the supremum is taken over all negative semidefinite operators $\hat\operator$ on $C^{\infty}(M)$ that extend $\operator$.  
\end{defn}

Thus we see that for fixed $r$ and suitable choices of $h_n\ra 0$,
$$\err_{L_{h_n;(X_1,X_2,\ldots,X_n)};r}\xra{n\ra\infty}0$$
for the particular graph Laplacian $L_{h_n;(X_1,X_2,\ldots,X_n)}$ we defined in terms of Gaussian kernels and samples $X_1,X_2,\ldots,X_n$ independently drawn from the uniform distribution on $M$.  
It is even possible to generalize this observation for suitable choices of $r_n\ra\infty$ and $h_n\ra 0$, although the precise rates at which $r_n$ grows and $h_n$ shrinks are subtle questions that are beyond the focus of the current paper (cf. \cite[Remarks 4,5]{dunson2021spectral}.)

\section{Distance Estimator}\label{sec:statement}
We formalize a discrete analogue of the Connes' Distance Formula as follows.  
Let $\graphlaplacian$ denote a symmetric negative $(n\times n)$ semidefinite matrix.  
For each such $\graphlaplacian$ and parameters $0\leq q\leq r\leq n$, define $\estimator{\graphlaplacian;q;r}(x_a,x_b)$ to be the solution to the following optimization problem, where $v_{(i);\graphlaplacian}$ denotes the projection of $v\in\R^n$ onto $E_{1;\graphlaplacian}+\dots+E_{r;\graphlaplacian}$ and $\odot$ denotes coordinate-wise (Hadamard) products:\\[.1cm]

\optimization{\graphlaplacian}{\cutoff}

The expression involving the square root rescales the vector $v\in\R^n$ to act like a vector of manifold distances from some fixed point.  
The constraint controls errors coming from using $\graphlaplacian$ to estimate $\Delta_M$ linearly and quadratically.  
We note that the above optimization problem can be completely formulated in the language of a \textit{Spectral (Exterior) Calculus (SEC)} \cite{spectral-exterior-calculus}, where estimates of local geometric features on $M$, such as its Riemannian metric, based on $n$ sample points are expressed in terms of the first $m\ll n$ eigenvectors of $\graphlaplacian$.
In order to obtain \textit{stochastic} convergence results, an additional complication for our estimator and other estimators in the SEC is that there are implicitly two choices of $m$, depending on whether we are using $\graphlaplacian$ to estimate the behavior of $\Delta_M$ as a \textit{linear} operator ($m=r\ll n$) or as part of a \textit{quadratic} expression ($m=q\ll r$).  

The vector $v$ should be regarded as a discrete analogue of a function $f\in C^{\infty}(M)$.  
The maximum value of $q$ satisfying the first constraint is an estimate for the number of eigenvectors on whose products $\graphlaplacian$ accurately models $\Delta_M$ whenever the first $r$ eigenvectors of $\graphlaplacian$ accurately models the corresponding eigenfunctions of $\Delta_M$.  
The optimization problem can be reformulated in the following manner amenable to numerical approximation.
Let $e_{0;\graphlaplacian},e_{1,L},\dots,e_{n;\graphlaplacian}$ denote the $n$ orthonormal eigenvectors of $\graphlaplacian$ corresponding to the respective eigenvalues $0=\lambda_0\geqslant\lambda_1\geqslant\dots\lambda_n$ of $L$, normalized so that their first non-zero coordinate is positive.
Let 
$$c_{ijk;\graphlaplacian;\odot}=e_{i;\graphlaplacian}\odot e_{j;\graphlaplacian}\odot e_{k;\graphlaplacian}.$$

We think of a $q$-vector $\hat{v}$ as a vector of Fourier coefficients for a potential distance function $f:M\ra\R$; an estimate of $\|\nabla f\|_\infty$ directly in terms of the discrete $\hat{v}$ analogue of a Fourier transform for a function is therefore
$$\hat\nabla_{L;q;r}\hat{v}=\sqrt{\max_i\left(\sum_{k=1}^r\sum_{0\leqslant i,j\leqslant q}(\nicefrac{\hat\lambda_k}{2}-\hat\lambda_j)c_{ijk;\graphlaplacian;\odot}\hat{v}_i\hat{v}_je_{k,\graphlaplacian}\right)_i}.$$

It follows algebraically that
\begin{equation}
  \label{eqn:sec.formula}
  \estimator{\graphlaplacian;q;r}(x_a,x_b)=
  \sup_{\hat{v}\in[-1,+1]^q}\left|\sum_{k=1}^q\hat{v}_k\left((e_{k;L})_a-(e_{k;L})_b\right)\right|(\hat\nabla_{L;q;r}\hat{v})^{-1}.
\end{equation}

One advantage of this distance estimator over the calculation of shortest path lengths in some suitably weighted network of sample points is that the different estimated distances $\estimator{\graphlaplacian;q;r}(x_a,x_b)$, for different choices of $a$ and $b$, can be simultaneously approximated within the same loop in a Monte Carlo optimization without changing the time-complexity of the calculation.  
In other words, we start with an $(n\times n)$-matrix $D=(\|X_i-X_j\|)_{ij}$ and then update $D$ entry-wise by the following rule until we exhibit sufficient convergence and thus obtain an estimate $D$ for the matrix of intrinsic distances:
\begin{equation}
  \label{eqn:algorithm}
  D_{ij}\leftarrow\max\left(D_{ij},|v_i-v_j|(\hat\nabla_{L;q;r}\hat{v})^{-1}\right).
\end{equation}

There are several sources for error in the estimator: error in the estimation of $\Delta_M$, additional error in the estimation of the quadratic $\|\nabla_M\|^2$, error in the additional estimation of $\|\nabla_M\|_\infty^2$ from $\|\nabla_M\|^2$ on the sample points, and error in estimating $\|f\|_{\Delta_M}$ for a smooth interpolation $f$ of functions in $C^{\infty}(X_1,\dots,X_n)$.
The \textit{covering radius} of a set $x_1,x_2,\ldots,x_n\in M$ is the minimum $\delta>0$ for which every point in $M$ is at most distance $\delta$ from one of the points $x_1,x_2,\ldots,x_n$.
Our main result is that these errors asymptotically and uniformly vanish.   

\begin{thm}
  \label{thm:convergence}
  \convergence{}
\end{thm}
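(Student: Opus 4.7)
The plan is to prove a two-sided bound comparing the discrete optimization defining $\estimator{L;q;r}(x_a,x_b)$ to the continuous supremum defining $\dist_{M;q}(x_a,x_b)$ by explicitly translating between a feasible discrete $v\in\R^n$ and a feasible smooth $f\in C^{\infty}(M)$. The bridge is the reformulated Connes' Distance Formula from Proposition \ref{prop:laplacian.gradient}: the constraint $\|\nabla_M f\|_\infty\leq 1$ is equivalent to $\|\half\Delta_M f^2-f\Delta_M f\|_\infty\leq 1$. Under the correspondence $v\leftrightarrow f$, $L\leftrightarrow\Delta_M$, and Hadamard product $\odot\leftrightarrow$ pointwise product of functions, each piece in the discrete constraint has a continuous counterpart whose discrepancy can be bounded separately.

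For the direction $\estimator{L;q;r}(x_a,x_b)\geq\dist_{M;q}(x_a,x_b)-\text{error}$, take a near-optimal $f=f_{(q)}$ with $\|\nabla_M f\|_\infty\leq 1$ and set $v_i=f(x_i)$. Since $f$ lies in the span of the first $q$ eigenfunctions of $\Delta_M$, which by the definition of $\err_{L;r}$ are uniformly approximated by the first $q\leq r$ eigenvectors of any extension of $L$, the vector $v$ lies within $O(r\,\err_{L;r})$ of its projection $v_{(q)}$. The constraint $\|\half L(v^{\odot 2})_{(r)}-(v\odot Lv)_{(r)}\|_\infty\leq 1$ is then derived from $\|\half\Delta_M f^2-f\Delta_M f\|_\infty\leq 1$ in three steps: (a) replacing $L$ by $\Delta_M$ on the first $r$ eigenvectors, which introduces $\err_{L;r}$ errors; (b) showing that $\Delta_M f^2$ and $f\Delta_M f$ are essentially supported on the first $r$ eigenspaces, so that the projections $(\cdot)_{(r)}$ cost only the rapidly decaying Fourier tail $\sum_{\ell=r}^\infty\ell^{-N}$, via Lemma \ref{lem:product.error} together with the triple-product decay results (Theorem \ref{thm:triple.products}), exactly under the hypothesis $2|\lambda_{q;L}|<|\lambda_{r;L}|$; and (c) passing between $\|\cdot\|_\infty$ on $M$ and the coordinate-wise maximum over $x_1,\ldots,x_n$, which costs $\delta$ times a Lipschitz bound on the continuous expression.

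The reverse direction runs the same three approximations in reverse: given a near-optimal discrete $v=v_{(q)}$, form $f=\sum_{i=1}^q\hat v_i e_{i;\Delta_M}\in C^{\infty}(M)$, verify $\|\nabla_M f\|_\infty\leq 1+\eta$ from the discrete constraint, and conclude $|v_a-v_b|\approx|f(x_a)-f(x_b)|\leq(1+\eta)\dist_{M;q}(x_a,x_b)$ where $\eta$ is of the stated order. The prefactor $r^4\diam_M^2$ in $\IntroMISE$ arises because (i) the near-optimal $f$ approximates the $1$-Lipschitz function $\dist_M(x_a,-)$ and so has $\|f\|_\infty\leq\diam_M$, with $f^2$ contributing $\diam_M^2$; and (ii) the Lipschitz bound needed for step (c) involves derivatives of $\half\Delta_M f^2-f\Delta_M f$, which for $f$ supported on the first $q\leq r$ eigenspaces accumulate factors of $|\lambda_i|$ for $i\leq r$ that consolidate by Weyl's Law into an $r^4$ prefactor.

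The main obstacle is step (b), because $L$ is not even approximately a derivation, so one cannot directly expand $L(v^{\odot 2})$ by a discrete product rule. The argument must instead go through Fourier analysis on $M$, showing that products $e_ie_j$ of low-frequency eigenfunctions have rapidly decaying high-frequency Fourier coefficients; the spectral-gap condition $2|\lambda_{q;L}|<|\lambda_{r;L}|$ is exactly what forces the Fourier mass of $\Delta_M(e_ie_j)$ for $i,j\leq q$ to concentrate within the first $r$ eigenspaces, with residual bounded by $\sum_{\ell=r}^\infty\ell^{-N}$. Once this is in hand, the remaining bookkeeping combines the three contributions $\sum_{\ell=r}^\infty\ell^{-N}$, $\err_{L;r}$, and $\delta$, each scaled by the $r^4\diam_M^2$ prefactor.
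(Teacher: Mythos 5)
Your plan matches the paper's proof in all essentials: a two-sided feasibility transfer between the discrete and continuous optimization problems (the paper's Lemmas \ref{lem:lower.bound} and \ref{lem:upper.bound}), with the error decomposed exactly as you describe into the spectral term $\err_{\graphlaplacian;r}$, the Fourier-tail term $\sum_{\ell\geq r}\ell^{-N}$ controlled by triple-product decay under the gap condition $2|\lambda_q|<|\lambda_r|$ (Lemma \ref{lem:product.error}), and the covering-radius term $\delta$ via derivative bounds consolidated by Weyl's Law. The only cosmetic difference is that the paper transfers candidate functions by matching Fourier coefficients across the two eigenbases and then rescaling by an explicit normalization $\mu$ (rather than sampling $f$ at the $x_i$ and projecting), which sidesteps the extra projection error you note but changes nothing of substance.
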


\begin{cor}
  \label{cor:stochastic}
  \stochastic{}
\end{cor}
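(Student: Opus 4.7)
The plan is to reduce Corollary~\ref{cor:stochastic} to Theorem~\ref{thm:convergence} by verifying, almost surely, that each of the three summands in the error bound
$$r_n^{4}\diam_M^2\Bigl(\sum_{\ell=r_n}^{\infty}\ell^{-N}+\err_{\graphlaplacian_n;r_n}+\delta_n\Bigr)$$
tends to zero for a deterministic sequence $r_n\to\infty$ with $r_n\leq n$. Here $\delta_n$ is the (random) covering radius of $\{X_1,\ldots,X_n\}$. The purely arithmetic summand is handled by choosing $N$ large: since $\sum_{\ell=r}^\infty \ell^{-N}=O(r^{1-N})$, any $N\geq 6$ makes $r_n^4\sum_{\ell=r_n}^\infty\ell^{-N}=O(r_n^{5-N})\to 0$. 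It therefore suffices to control $r_n^4\delta_n$ and $r_n^4\err_{\graphlaplacian_n;r_n}$ almost surely, and then invoke Theorem~\ref{thm:convergence} pointwise in $\omega$ for all sufficiently large $n$.

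\textbf{Covering radius.} First I would show $\delta_n\to 0$ almost surely with a polynomial rate. Because $X_1,X_2,\ldots$ are equidistributed from a smooth density $f\geq f_0>0$ on the compact manifold $M$, for any $\varepsilon>0$ a finite cover of $M$ by geodesic balls of radius $\varepsilon/2$ has positive probability mass in each ball, uniformly bounded below by a constant $c(\varepsilon)>0$ independent of $n$. A standard Borel--Cantelli or Chernoff argument then gives $\delta_n=O\bigl((\log n/n)^{1/d}\bigr)$ almost surely, which in particular makes $r_n^4\delta_n\to 0$ for any $r_n=o\bigl((n/\log n)^{1/(4d)}\bigr)$.

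\textbf{Diagonal selection of $r_n$.} The assumption that $\err_{\graphlaplacian_n;i}\to 0$ a.s.\ for each fixed $i$ implies convergence in probability for each $i$, but does not supply a uniform rate in $i$. I would therefore construct $r_n$ by a standard diagonal trick: for each $k\in\mathbb{N}$ pick the smallest $n_k>n_{k-1}$ with
$$\Pr\bigl(\err_{\graphlaplacian_n;k}>k^{-4}2^{-k}\bigr)<2^{-k}\quad\text{for all }n\geq n_k,$$
and set $r_n=\max\{k\leq n:n_k\leq n\}$. Then $r_n\to\infty$, $r_n\leq n$, and Borel--Cantelli together with the summability $\sum_n 2^{-r_n}<\infty$ yield $r_n^4\err_{\graphlaplacian_n;r_n}\to 0$ a.s. Tightening the selection further so that simultaneously $r_n=o\bigl((n/\log n)^{1/(4d)}\bigr)$ disposes of the $r_n^4\delta_n$ term. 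Finally, because $r_n\to\infty$ and the eigenvalues of $\graphlaplacian_n$ track those of $\Delta_M$ on the first $r_n$ modes, the $q=q_n$ determined by $2|\lambda_{q;\graphlaplacian_n}|<|\lambda_{r_n;\graphlaplacian_n}|$ satisfies $q_n\to\infty$ as well, and the matching of estimator to $\dist_{M;q}$ follows from Theorem~\ref{thm:convergence}.

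\textbf{Main obstacle.} The delicate point is the simultaneous control of $r_n$ against \emph{two} independent, unquantified stochastic rates: the sample covering radius and the spectral deviation $\err_{\graphlaplacian_n;r_n}$. Without an explicit rate on the latter one cannot simply pick $r_n$ a priori, so the diagonal-plus-Borel--Cantelli construction above is essential; if one is willing to import the explicit $n^{-2/((5d+6)(d+6))}$ eigenfunction bound of \cite{wang2015spectral} cited earlier, the selection can be made concrete, for example $r_n\sim(\log n)^{\alpha}$ for a small $\alpha>0$ depending on $d$, which both verifies the hypotheses of Theorem~\ref{thm:convergence} for $\delta_n$ and $\err_{\graphlaplacian_n;r_n}$ sufficiently small, and makes every term in the bound vanish a.s.
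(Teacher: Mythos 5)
Your overall route is the paper's: reduce to Theorem \ref{thm:convergence}, kill the $\nu$ term by taking $N$ large, show $\delta_n\to 0$ a.s., and extract a slowly growing $r_n$ so that the spectral error along the diagonal vanishes. But two of your concrete steps do not follow from the hypotheses as stated. First, the corollary only assumes the samples are \emph{equidistributed} from a density bounded away from zero (in the paper's sense of almost-sure convergence of empirical averages against smooth test functions); it does not assume independence, so the Chernoff/Borel--Cantelli rate $\delta_n=\mathcal{O}((\log n/n)^{1/d})$ is not available. What equidistribution plus positivity gives you (and all the paper's proof uses) is $\delta_n\to 0$ almost surely with no rate; a deterministic, polynomially growing $r_n$ fixed in advance therefore has nothing to beat $\delta_n$ against, and the choice of $r_n$ must also absorb the covering-radius term, either by a further tail-probability diagonalization or by allowing $r_n$ to depend on the realization, which is what the paper implicitly does.

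Second, your Borel--Cantelli step for the spectral term is broken as written: with $r_n=k$ on the block $[n_k,n_{k+1})$, the relevant series is $\sum_k (n_{k+1}-n_k)2^{-k}$, and the block lengths $n_{k+1}-n_k$ are uncontrolled (they are dictated by the unknown rate at level $k+1$), so the claimed summability $\sum_n 2^{-r_n}<\infty$ can fail. The repair is to use the almost-sure hypothesis instead of downgrading it to convergence in probability at a single $n$: for each $k$ choose $n_k$ with $\Pr\left(\sup_{n\ge n_k}\err_{\graphlaplacian_n;k}>k^{-4}2^{-k}\right)<2^{-k}$ and apply Borel--Cantelli over $k$; then $r_n^4\err_{\graphlaplacian_n;r_n}\to 0$ a.s. with $r_n=\max\{k\le n: n_k\le n\}$. (Alternatively, argue pointwise on the probability-one event where all countably many convergences $\err_{\graphlaplacian_n;i}\to 0$ and $\delta_n\to 0$ hold, as the paper does with its contradiction/extraction argument, at the price of an $\omega$-dependent $r_n$.) A smaller point: the corollary compares the estimator to $\dist_{M;q}$ for the \emph{same} fixed $q$, so your closing remark that $q_n\to\infty$ is not needed; what is needed, and what the paper arranges via $s_n$, $S=\sup_n s_n$, and Lemma \ref{lem:k.criterion}, is that $r_n$ stays large enough that the eigenvalue-gap condition $2|\lambda_q|+\half<|\lambda_{r_n}|$ holds, so that Theorem \ref{thm:convergence} applies with that fixed $q$.
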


We run some simulations to test the estimator on a unit circle, embedded in the usual way in $\mathbb{R}^2$ as the complex numbers of unit magnitude, as follows.  
We sample $n$ points independently and uniformly, construct a graph Laplacian, and approximate both the estimator and $q$-resolved distances to obtain an approximation of an empirical plot of the $\ell_1$-loss.

\begin{figure}[h]
  \label{fig:circle.loss}
  \begin{center}
    \includegraphics[width=60mm,height=40mm]{"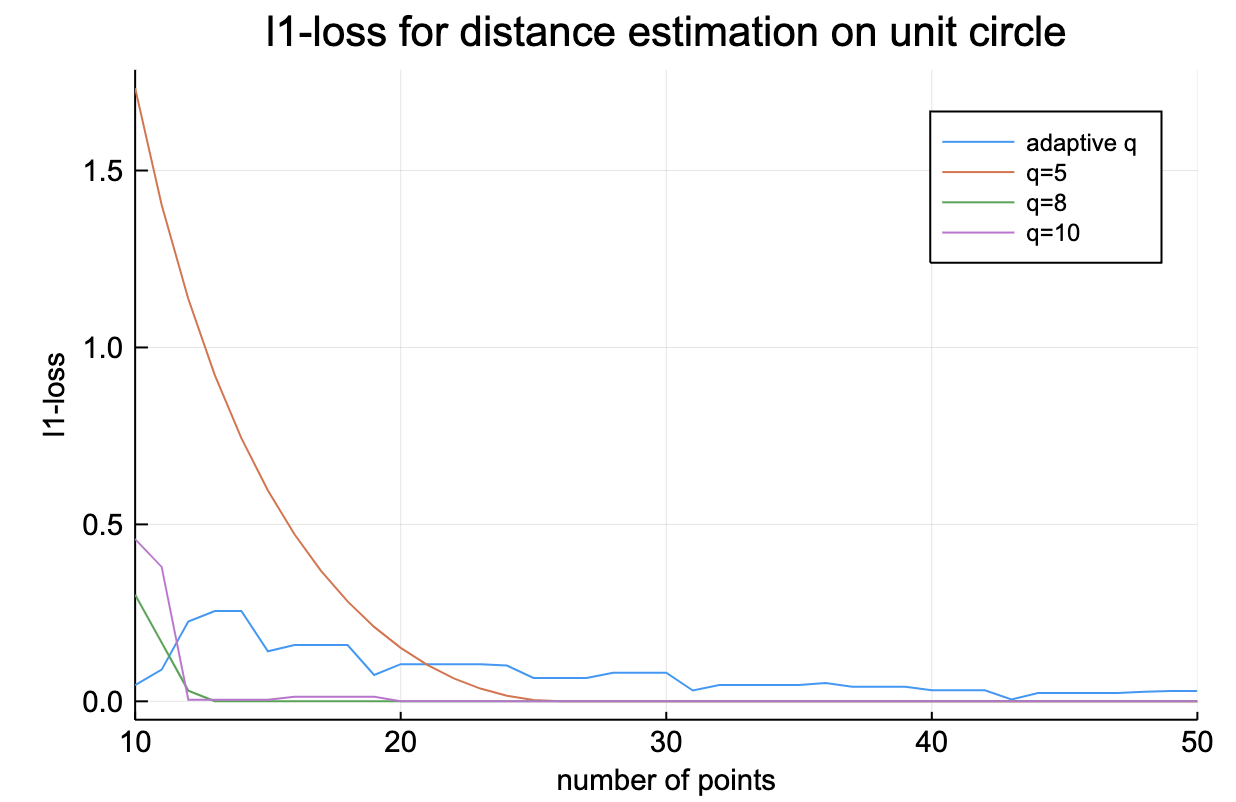"}
  \end{center}
  \caption{Empirical Loss. The figure above plots an approximation of the empirical $\ell_1$-loss $|\estimator{q;n;\graphlaplacian_n}(X_1,X_2)-\dist_{\mathbb{S}^1;q}(X_1,X_2)|$ for points $X_1,X_2,\ldots,X_{50}$ uniformly and independently sampled from the unit circle embedded in the plane as the unit complex numbers against $n$ for $q=5,8,10$, as well as the empirical $\ell_1$-loss $|\estimator{\hat{q}_n;n;\graphlaplacian_n}(X_1,X_2)-\dist_{\mathbb{S}^1}(X_1,X_2)|$ where $\hat{q}_n$ is adaptively chosen for each $n$ to be the maximum natural number $q$ such that $2|\lambda_{q;\graphlaplacian_n}|<|\lambda_{n;\graphlaplacian_n}|$.
  Here the $q$-resolved distance $\dist_{\mathbb{S}^1;q}$ is approximated by the value of the objective function in (\ref{eqn:connes.distance.formula}) by the $q$th partial Fourier series $f$ associated to the distance function $\dist_{\mathbb{S}^1}(X_1,-)$, rescaled so that $\|\nabla_Mf\|_\infty\leq 1$.  
  Here the estimator $\estimator{q;n;\graphlaplacian_n}(X_1,X_2)$ is approximated by computing the associated objective function on vector of values of the $q$th partial Fourier sum of the oracle value of the objective function in (\ref{eqn:sec.formula}) by a vector of the first $q$th Fourier coefficients of the distance function on $X_1,X_2,\ldots,X_n$.}
  \label{fig:samples}
\end{figure}

\section{Proofs}\label{sec:proofs}
For each operator $\operator$ on vector space of the form $C^\infty(X)$ and $f\in C^{\infty}(X)$, define
\begin{equation}
  \label{eqn:dirac}
  \dirac{\operator}{r}f=\sqrt{\half\,\operator(f^2)_{(r);\operator}-(f\operator f)_{(r);\operator}}.
\end{equation}

We write $\dirac{\operator}{r}^2f$ for the square of (\ref{eqn:dirac}). 
Proposition \ref{prop:laplacian.gradient} implies that
$$\dirac{\Delta_M}{\infty}f=\|\nabla_Mf\|:M\rightarrow\mathbb{R}.$$

The operator $\dirac{\operator}{r}$, while not linear, respects scalar multiplication.  

\begin{lem}
  \label{lem:nonlinear.gradient}
  Fix $r\leqslant\rank_{\operator}$.
  For each $\lambda\in\R$ and $f\in C^{\infty}(M)$, 
  $$\dirac{\operator}{r}\lambda f=\lambda\dirac{\operator}{r}f.$$
\end{lem}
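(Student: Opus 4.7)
The plan is to simply unpack the definition (\ref{eqn:dirac}) and invoke linearity twice. Substituting $\lambda f$ for $f$ in the defining expression, I would first use that $(\lambda f)^2 = \lambda^2 f^2$ together with the linearity of $\operator$ to rewrite
$$\half\,\operator((\lambda f)^2) - (\lambda f)\operator(\lambda f) = \lambda^2\!\left(\half\,\operator(f^2) - f\operator f\right).$$
Then, since the projection $(-)_{(r);\operator}$ onto $E_{1;\operator}+\cdots+E_{r;\operator}$ is itself linear, the scalar $\lambda^2$ pulls through the projection as well, so the entire quantity inside the square root in $\dirac{\operator}{r}(\lambda f)$ equals $\lambda^2$ times the corresponding expression for $f$. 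Taking the square root and factoring out $\sqrt{\lambda^2}$ yields the identity, with the understanding that $\dirac{\operator}{r}f$ is by construction the non-negative square-root-valued function whose square is the bracketed expression, so the overall sign is immaterial and the conclusion $\dirac{\operator}{r}(\lambda f) = \lambda \dirac{\operator}{r}f$ holds as an identity of such non-negative functions.

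There is essentially no obstacle: no property of $M$, no decay of triple products, and nothing about the spectrum of $\operator$ beyond having a well-defined projection onto its first $r$ eigenspaces is needed. The only bookkeeping step worth flagging is the $\sqrt{\lambda^2} = |\lambda|$ issue, which I would either absorb into the convention that $\dirac{\operator}{r}$ takes values in the non-negative reals (making the statement an equality of absolute values) or handle by explicit case analysis on the sign of $\lambda$. The entire proof should be one short display followed by a single sentence.
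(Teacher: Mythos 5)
Your proposal is correct and follows essentially the same route as the paper: square $\dirac{\operator}{r}$, pull $\lambda^2$ through $\operator$ and the linear projection $(-)_{(r);\operator}$, and take square roots. The only difference is that you explicitly flag the $\sqrt{\lambda^2}=|\lambda|$ bookkeeping, which the paper's one-line proof silently elides; this is a reasonable refinement rather than a divergence in method.
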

\begin{proof}
  Observe that
  \begin{align*}
	  \dirac{\operator}{r}^2\lambda f&=\half\operator(\lambda^2f^2)_{(r)}-\lambda(f\operator\lambda f)_{(r)}=\lambda^2(\half\operator (f^2)_{(r)}-(f\operator f)_{(r)})\\&=\lambda^2\dirac{\operator}{r}^2f.
  \end{align*}
\end{proof}

For convenience, write $\nu_{r;N}$ for the quantity
\begin{equation}
  \label{eqn:triple.product.rate}
  \nu_{r;N}=\sum_{\ell=r}^\infty\ell^{-N}.
\end{equation}

Henceforth $N$ will denote a fixed large integer $N\gg 0$.  
Then $\nu_{r;N}\xra{r\rightarrow\infty}0$.  

\subsection{Derivative bounds}
We review some basic observations about smooth functions on $M$, obtained by using Fourier expansions and basic facts about eigenfunctions for $\Delta_M$.  
In particular, we give hard bounds on derivatives up to order $3$ for finite linear combinations of $e_1,e_2,\cdots$ and their squares.

\begin{lem}
  \label{lem:coefficient.bound}
  For each $f\in C^{\infty}(M)$ and $i=1,2,\ldots$, $|\hat{f}_i|=\mathcal{O}\left(\|f\|_\infty\right)$. 
\end{lem}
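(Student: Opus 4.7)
The statement asserts that for each index $i$ the map $f\mapsto\hat f_i$ is bounded as a linear functional on $(C^\infty(M),\|-\|_\infty)$, with an implicit constant allowed to depend on $M$ (and possibly $i$, although in fact it need not). The plan is to unfold the definition of the Fourier coefficient and apply two standard inequalities in succession, deducing the bound directly from the orthonormality of the eigenbasis and the compactness of $M$.

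First, I would write $\hat f_i=\langle f,e_i\rangle=\int_M f e_i$ and apply the pointwise bound $|f(x)e_i(x)|\leq\|f\|_\infty|e_i(x)|$, so that
\[
  |\hat f_i|\;\leq\;\|f\|_\infty\int_M|e_i|.
\]
Next, I would bound the $L_1$-norm of $e_i$ by its $L_2$-norm via Cauchy--Schwarz against the constant function $1$:
\[
  \int_M|e_i|\;=\;\langle|e_i|,1\rangle\;\leq\;\Bigl(\int_M|e_i|^2\Bigr)^{\!\half}\Bigl(\int_M 1\Bigr)^{\!\half}\;=\;\sqrt{\mathrm{vol}_M},
\]
using that $e_i$ is a unit vector in the $L_2$-inner product by the orthonormality convention stated just before Definition~\ref{defn:blurred.distance}. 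Combining these,
\[
  |\hat f_i|\;\leq\;\sqrt{\mathrm{vol}_M}\,\|f\|_\infty,
\]
which gives the claimed $\mathcal{O}(\|f\|_\infty)$ with an implicit constant $\sqrt{\mathrm{vol}_M}$ depending only on the manifold $M$, uniformly in $i$.

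There is essentially no obstacle here: the argument is a two-line application of H\"older and Cauchy--Schwarz, and the only things used are the $L_2$-orthonormality of the eigenbasis $\{e_i\}$ and the finiteness of $\mathrm{vol}_M$ (guaranteed by compactness of $M$). The main thing worth noting for downstream use is that the constant is independent of $i$, which may be relevant when this lemma is invoked to control sums of Fourier coefficients or products thereof in the subsequent derivative bounds.
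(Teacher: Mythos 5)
Your proof is correct and is essentially the same argument as the paper's: the paper applies Cauchy--Schwarz directly to get $|\hat f_i|\leq\|f\|_2\|e_i\|_2=\|f\|_2=\mathcal{O}(\|f\|_\infty)$, while you pull out $\|f\|_\infty$ first and apply Cauchy--Schwarz to $\int_M|e_i|$; both hinge on $\|e_i\|_2=1$ and the finiteness of $\mathrm{vol}_M$, and both yield a constant $\sqrt{\mathrm{vol}_M}$ independent of $i$.
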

\begin{proof}
  We have the string of inequalities
  $$|\hat{f}_i|=|\int_Mfe_i|\leq\|f\|_2\|e_i\|_2=\|f\|_2=\mathcal{O}(\|f\|_\infty).$$
\end{proof}

The following pair lemmas are the key observations that allow us to extend applications of graph Laplacians as discrete estimates of $\Delta_M$ as a linear operator to discrete estimates of $\Delta_M$ in quadratic expressions.
The next lemma bounds the error in truncating the Laplacian of a product.

\begin{lem}
  \label{lem:product.error}
  If $2|\lambda_{q}|<|\lambda_{r}|$ then for all $N\gg 0$
  $$\|Tf^2-T((f^2)_{(r)})\|_\infty=\mathcal{O}\left(\GradientProductError{r}{\|f\|_\infty}\right),$$
  for $f\in E_{1;\Delta_M}+\cdots+E_{q;\Delta_M}$ and $\operator=\Delta_M,\nabla_M\Delta_M$.  
\end{lem}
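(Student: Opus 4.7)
The plan is to expand $f^2$ in the eigenbasis of $\Delta_M$, isolate exactly which Fourier modes are killed by the order-$r$ truncation, and then use the triple product decay announced in Theorem \ref{thm:triple.products} to bound the discarded tail in $L^\infty$.

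First I would write $f = \sum_{i=1}^q \hat{f}_i e_i$, which is possible since $f \in E_{1;\Delta_M}+\cdots+E_{q;\Delta_M}$, and note $|\hat{f}_i| = \mathcal{O}(\|f\|_\infty)$ by Lemma \ref{lem:coefficient.bound}. Expanding $e_i e_j = \sum_{k=1}^\infty c_{ijk} e_k$ gives
$$\widehat{(f^2)}_k = \sum_{i,j=1}^q \hat{f}_i \hat{f}_j c_{ijk},$$
so that
$$T f^2 - T((f^2)_{(r)}) = \sum_{k=r+1}^\infty \widehat{(f^2)}_k \, T e_k,$$
where $T e_k = \lambda_k e_k$ when $T = \Delta_M$ and $T e_k = \lambda_k \nabla_M e_k$ when $T = \nabla_M \Delta_M$. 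Taking the triangle inequality in the sup norm,
$$\|T f^2 - T((f^2)_{(r)})\|_\infty \leq \sum_{k=r+1}^\infty \sum_{i,j=1}^q |\hat{f}_i||\hat{f}_j||c_{ijk}|\,\|T e_k\|_\infty.$$

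Next I would invoke the triple product bound from Theorem \ref{thm:triple.products}. The spectral gap hypothesis $2|\lambda_q| < |\lambda_r|$ is precisely what forces rapid decay $|c_{ijk}| = \mathcal{O}(k^{-N})$ uniformly in $i,j \leq q$ and $k > r$, for any large $N$ we like. Combined with the polynomial growth of $|\lambda_k|$, $\|e_k\|_\infty$, and $\|\nabla_M e_k\|_\infty$ (from Weyl's Law and standard $L^\infty$ Sobolev estimates on eigenfunctions and their derivatives), the factor $\|Te_k\|_\infty$ is absorbed into the decay by shaving off a few powers from $N$. Each of the $q^2$ summands indexed by $(i,j)$ is therefore $\mathcal{O}(\|f\|_\infty^2 k^{-N})$ with a suitably enlarged $N$, so summing over $k$ yields a total bound of $\mathcal{O}(q^2 \|f\|_\infty^2 \nu_{r;N})$. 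Finally, since the spectral gap and Weyl's Law enforce a polynomial relation $q \leq \mathrm{poly}(r)$, the factor $q^2$ is absorbed into $r^3$ (with room to spare) to match the claimed form $\GradientProductError{r}{\|f\|_\infty}$.

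The main obstacle is the invocation of Theorem \ref{thm:triple.products}: the quantitative decay of $c_{ijk}$ genuinely rests on the gap $2|\lambda_q| < |\lambda_r|$, and the effective exponent in that decay must be large enough to overpower the polynomial growth of $\|T e_k\|_\infty$. The secondary nuisance is the $T = \nabla_M \Delta_M$ case, which costs one extra derivative on $e_k$ and therefore a slightly worse polynomial factor; however, since $N$ is at our disposal, these polynomial overheads are absorbed easily. Once the Appendix bound is in hand, the rest of the argument is the mechanical sum-and-bound above.
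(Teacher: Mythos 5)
Your proposal is correct and takes essentially the same route as the paper: the paper likewise expands $f^2$ over the first $q$ eigenfunctions with $|\hat f_i|=\mathcal{O}(\|f\|_\infty)$, controls the discarded modes $k>r$ via the triple-product decay of Theorem \ref{thm:triple.products} combined with the sup-norm bounds on $e_k$, $\nabla_M e_k$ and Weyl's law (these steps are packaged as Lemmas \ref{lem:eigenproduct.error} and \ref{lem:gradient.eigenproduct.error} in the Appendix), and then uses $q=\mathcal{O}(r)$ to reach the $r^{3}\|f\|_\infty^2\nu_{r;N}$ form. The only cosmetic difference is that you inline those appendix lemmas and read Theorem \ref{thm:triple.products} as pointwise decay of $c_{ijk}$, whereas the theorem is a tail-sum bound that the paper handles by Abel summation; your absorption of the polynomial growth of $\|Te_k\|_\infty$ into the arbitrary exponent $N$ is exactly the same mechanism.
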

\begin{proof}
  Let $d=\dim_M$. 
  Lemmas \ref{lem:coefficient.bound}, \ref{lem:eigenproduct.error}, and \ref{lem:gradient.eigenproduct.error} imply that
  \begin{align*}
    \|\operator f^2-\operator((f^2)_{(r)})\|_\infty
	&\leq\|\sum_{i,j\leq q}\hat{f}_i\hat{f}_j(\operator(e_ie_j)-(\operator(e_ie_j))_{(r)})\|_{\infty}\\
	&=\mathcal{O}\left(\ProductError{}\right)  
  \end{align*}
  Weyl's Law implies $q=\mathcal{O}(r)$, giving the result.  
\end{proof}

\begin{lem}
  \label{lem:first.derivative.bound}
  For all $r$ and $f\in E_{1;\Delta_M}+\cdots E_{r;\Delta_M}$,
  $$\|\nabla_Mf\|_\infty=\mathcal{O}\left(r^{\nicefrac{(3d+1)}{2d}}\|f\|_\infty\right).$$
\end{lem}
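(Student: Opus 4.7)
The plan is to expand $f$ in the basis $e_1,\dots,e_r$, bound each Fourier coefficient by Lemma \ref{lem:coefficient.bound}, estimate the sup norm of the gradient of each eigenfunction using a standard Hörmander-type bound, and then sum over $i=1,\dots,r$ using Weyl's Law.

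First I would write $f = \sum_{i=1}^{r} \hat{f}_i e_i$, so that $\nabla_M f = \sum_{i=1}^{r} \hat{f}_i \nabla_M e_i$ and hence
\begin{equation*}
\|\nabla_M f\|_\infty \;\leq\; \sum_{i=1}^{r} |\hat{f}_i|\,\|\nabla_M e_i\|_\infty.
\end{equation*}
By Lemma \ref{lem:coefficient.bound}, each $|\hat{f}_i| = \mathcal{O}(\|f\|_\infty)$, so the task reduces to bounding $\sum_{i=1}^{r}\|\nabla_M e_i\|_\infty$.

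Next I would invoke the standard sup-norm bound $\|\nabla_M e_i\|_\infty = \mathcal{O}(|\lambda_i|^{(d+1)/4})$, which is the gradient analogue of Hörmander's $L^\infty$-estimate for spectral clusters on a compact Riemannian manifold (the paper presumably records this, or a close cousin, in the appendix alongside Theorem \ref{thm:triple.products} and the eigenproduct bounds used in Lemma \ref{lem:product.error}). By Weyl's Law, $|\lambda_i| = \Theta(i^{2/d})$, so $\|\nabla_M e_i\|_\infty = \mathcal{O}(i^{(d+1)/(2d)})$.

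Finally I would sum the resulting tail:
\begin{equation*}
\sum_{i=1}^{r} i^{(d+1)/(2d)} = \mathcal{O}\bigl(r^{1+(d+1)/(2d)}\bigr) = \mathcal{O}\bigl(r^{(3d+1)/(2d)}\bigr),
\end{equation*}
combining with $|\hat{f}_i| = \mathcal{O}(\|f\|_\infty)$ to conclude $\|\nabla_M f\|_\infty = \mathcal{O}(r^{(3d+1)/(2d)}\|f\|_\infty)$.

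The main potential obstacle is justifying the pointwise gradient bound $\|\nabla_M e_i\|_\infty = \mathcal{O}(|\lambda_i|^{(d+1)/4})$. The $L^\infty$ bound on $e_i$ itself is $\mathcal{O}(|\lambda_i|^{(d-1)/4})$; one gains an extra factor of $|\lambda_i|^{1/2}$ upon differentiating because $-\Delta_M e_i = |\lambda_i| e_i$ controls one Sobolev-order higher via elliptic regularity on a compact manifold. If the paper prefers to avoid Hörmander, one could instead write $\|\nabla_M e_i\|_\infty^2 = \|\tfrac12\Delta_M e_i^2 - e_i\Delta_M e_i\|_\infty \leq |\lambda_i|\cdot\mathcal{O}(\|e_i\|_\infty^2) + \tfrac12\|\Delta_M(e_i^2)\|_\infty$ via Proposition \ref{prop:laplacian.gradient} and then expand $e_i^2$ in the eigenbasis using the triple-product decay estimates of Theorem \ref{thm:triple.products}; this keeps the argument self-contained within the toolkit already developed in the paper, at the cost of some bookkeeping to verify that the resulting exponent still comes out to $(3d+1)/(2d)$.
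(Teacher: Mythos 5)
Your proposal is correct and matches the paper's proof essentially step for step: expand $f$ in the first $r$ eigenfunctions, bound each Fourier coefficient by $\mathcal{O}(\|f\|_\infty)$ via Lemma \ref{lem:coefficient.bound}, invoke the gradient sup-norm bound $\|\nabla_M e_i\|_\infty = \mathcal{O}(|\lambda_i|^{(d+1)/4})$ (this is Corollary \ref{cor:gradient.bound}, already in the appendix, so no need for the alternative fallback argument you sketch), and convert $\lambda_r$ to powers of $r$ via Weyl's Law. The only cosmetic difference is that you sum $\sum_{i=1}^r i^{(d+1)/(2d)}$ where the paper bounds by $r\cdot\max_{i\le r}\|\nabla e_i\|_\infty$, which gives the same exponent $(3d+1)/(2d)$.
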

\begin{proof}
  Lemma \ref{lem:coefficient.bound} and Corollary \ref{cor:gradient.bound} imply that
  \begin{align*}
      \|\nabla_Mf\|_\infty
	&=\|\sum_{i=1}^r\hat{f}_i\nabla_M e_{i}\|=\mathcal{O}\left(r\|f\|_\infty\max_{1\leq i\leq r}\|\nabla e_i\|_\infty\right)
	\leq\mathcal{O}\left(r\|f\|_\infty |\lambda_{r}|^{\nicefrac{(d+1)}{4}}\right) 
  \end{align*}
  Weyl's Law implies $|\lambda_r|=\mathcal{O}(r^{\nicefrac{2}{d}})$, giving the result.  
\end{proof}

\begin{lem}
  \label{lem:third.derivative.bound}
  For all $r$ and $f\in E_{1;\Delta_M}+\cdots E_{r;\Delta_M}$,
  $$\|\nabla_M\Delta_Mf\|_\infty=\mathcal{O}\left(r^{\nicefrac{(3d+5)}{2d}}\|f\|_\infty\right).$$
\end{lem}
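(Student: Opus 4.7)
The plan is to mimic the argument of Lemma \ref{lem:first.derivative.bound}, but with one extra factor of $|\lambda_r|$ introduced by the appearance of $\Delta_M$ inside the norm. Concretely, I would first expand $f = \sum_{i=1}^r \hat{f}_i e_i$, apply $\Delta_M$ diagonally in the eigenbasis to obtain $\Delta_M f = \sum_{i=1}^r \hat{f}_i \lambda_i e_i$, then differentiate term-by-term to write
$$\nabla_M \Delta_M f = \sum_{i=1}^r \hat{f}_i \lambda_i \nabla_M e_i.$$

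Next I would apply the triangle inequality in the uniform norm. Bounding each Fourier coefficient $|\hat{f}_i|$ by $\mathcal{O}(\|f\|_\infty)$ via Lemma \ref{lem:coefficient.bound}, bounding $|\lambda_i| \leq |\lambda_r|$ for $i \leq r$, and bounding $\|\nabla_M e_i\|_\infty = \mathcal{O}(|\lambda_r|^{(d+1)/4})$ via Corollary \ref{cor:gradient.bound}, I get
$$\|\nabla_M \Delta_M f\|_\infty = \mathcal{O}\!\left( r \|f\|_\infty |\lambda_r| \cdot |\lambda_r|^{(d+1)/4}\right) = \mathcal{O}\!\left(r \|f\|_\infty |\lambda_r|^{(d+5)/4}\right).$$

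Finally, Weyl's Law gives $|\lambda_r| = \mathcal{O}(r^{2/d})$, so $|\lambda_r|^{(d+5)/4} = \mathcal{O}(r^{(d+5)/(2d)})$, and combining with the outer factor of $r$ yields the claimed bound $\mathcal{O}(r^{(3d+5)/(2d)} \|f\|_\infty)$.

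I do not anticipate a genuine obstacle here: the argument is essentially a bookkeeping exercise parallel to Lemma \ref{lem:first.derivative.bound}. The only point that warrants attention is making sure the supremum of $|\lambda_i|$ over $i \leq r$ is correctly estimated by $|\lambda_r|$ (which follows from the convention that the eigenvalues are listed in decreasing order of their absolute values, i.e., $0 \geq \lambda_1 \geq \lambda_2 \geq \cdots$ so that $|\lambda_1| \leq \cdots \leq |\lambda_r|$), and that the exponent arithmetic $(d+1)/4 + 1 = (d+5)/4$ is kept straight before converting powers of $|\lambda_r|$ to powers of $r$ via Weyl's Law.
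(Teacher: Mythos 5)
Your proposal is correct and is essentially the paper's own proof: expand in the eigenbasis, bound $|\hat{f}_i|$ by Lemma \ref{lem:coefficient.bound}, bound $\|\nabla_M e_i\|_\infty$ by Corollary \ref{cor:gradient.bound}, pick up the extra factor $|\lambda_r|$, and convert to powers of $r$ via Weyl's Law. No differences worth noting.
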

\begin{proof}
  Lemma \ref{lem:coefficient.bound} and Corollary \ref{cor:gradient.bound} imply that
  \begin{align*}
      \|\nabla_M\Delta_Mf\|_\infty
	&=\|\sum_{i=1}^r\lambda_{i}\hat{f}_i\nabla_M e_{i}\|=\mathcal{O}\left(r\|f\|_\infty|\lambda_r|\max_{1\leq i\leq r}\|\nabla e_i\|_\infty\right)\\
	&\leq\mathcal{O}\left(r\|f\|_\infty |\lambda_{r}|^{\nicefrac{(d+5)}{4}}\right).
  \end{align*}
  Weyl's Law implies $|\lambda_r|=\mathcal{O}(r^{\nicefrac{2}{d}})$, giving the result.  
\end{proof}

\begin{lem}
  \label{lem:third.derivative.square.bound}
  For all $k>0$ and $f\in E_{1;\Delta_M}+\cdots E_{k;\Delta_M}$,
  $$\|\nabla_M\Delta_Mf^2\|_\infty=\mathcal{O}\left(k^2\|f\|^2_\infty(k^{\nicefrac{(3d+5)}{2d}}+\nu(k,N)\right).$$
\end{lem}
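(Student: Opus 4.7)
The plan is to bootstrap from Lemma \ref{lem:third.derivative.bound}, which already handles the third-derivative bound for elements of $E_{1;\Delta_M}+\cdots+E_{k;\Delta_M}$, by reducing the product $f^2$ (which is \emph{not} in any finite eigenspace sum) to a partial Fourier sum plus a controllable tail. The factor $k^2$ in the stated bound strongly suggests expanding $f^2$ bilinearly as a sum of $k^2$ eigenfunction products $e_ie_j$ and estimating uniformly.

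First, writing $f=\sum_{i=1}^k\hat{f}_ie_i$, Lemma \ref{lem:coefficient.bound} gives $|\hat{f}_i|=\mathcal{O}(\|f\|_\infty)$, so
\[
\nabla_M\Delta_Mf^2=\sum_{i,j=1}^k\hat{f}_i\hat{f}_j\,\nabla_M\Delta_M(e_ie_j),
\]
and by the triangle inequality it suffices to show the uniform bound
\[
\max_{i,j\leq k}\|\nabla_M\Delta_M(e_ie_j)\|_\infty=\mathcal{O}\!\left(k^{\nicefrac{(3d+5)}{2d}}+\nu(k,N)\right),
\]
since multiplying by $k^2\cdot\mathcal{O}(\|f\|_\infty^2)$ yields the desired conclusion.

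Next, pick $r$ satisfying $2|\lambda_k|<|\lambda_r|$; by Weyl's Law one may take $r$ as a constant multiple of $k$, so $r=\Theta(k)$. Split each product as $e_ie_j=(e_ie_j)_{(r)}+(e_ie_j-(e_ie_j)_{(r)})$. For the truncated part, $(e_ie_j)_{(r)}\in E_{1;\Delta_M}+\cdots+E_{r;\Delta_M}$, so applying Lemma \ref{lem:third.derivative.bound} gives
\[
\|\nabla_M\Delta_M(e_ie_j)_{(r)}\|_\infty=\mathcal{O}\!\left(r^{\nicefrac{(3d+5)}{2d}}\|(e_ie_j)_{(r)}\|_\infty\right)=\mathcal{O}\!\left(k^{\nicefrac{(3d+5)}{2d}}\right),
\]
where the $\mathcal{O}(1)$ bound on $\|(e_ie_j)_{(r)}\|_\infty$ follows from $|\widehat{e_ie_j}_\ell|=|c_{ij\ell}|$ being bounded (indeed decaying) in $\ell$ via Theorem \ref{thm:triple.products} combined with standard Weyl/Hörmander control on $\|e_\ell\|_\infty$. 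For the tail, applying essentially the same mechanism that underlies Lemma \ref{lem:product.error} (triple-product decay $|c_{ij\ell}|=\mathcal{O}(\ell^{-N})$ for $\ell\gg i,j$, together with the Weyl estimates $|\lambda_\ell|=\mathcal{O}(\ell^{2/d})$ and $\|\nabla_M e_\ell\|_\infty=\mathcal{O}(|\lambda_\ell|^{(d+1)/4})$) controls the residual by a geometric tail which is $\mathcal{O}(\nu(k,N))$ once $N$ is chosen large enough.

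The main obstacle is the tail estimate for $\nabla_M\Delta_M(e_ie_j-(e_ie_j)_{(r)})$: the Weyl growth of $|\lambda_\ell|$ and $\|\nabla_M e_\ell\|_\infty$ amplifies the series term-by-term, so one must verify that the triple-product decay from Theorem \ref{thm:triple.products} is strong enough (for sufficiently large $N$) to dominate the factor $|\lambda_\ell|\cdot\|\nabla_M e_\ell\|_\infty=\mathcal{O}(\ell^{(d+5)/(2d)})$ and leave a summable remainder captured by $\nu(k,N)$. Modulo this bookkeeping, all other estimates are linear consequences of Lemmas \ref{lem:coefficient.bound} and \ref{lem:third.derivative.bound} applied in the pattern above.
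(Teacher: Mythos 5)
Your overall strategy---truncate at a level $r$ with $2|\lambda_k|<|\lambda_r|$, $r=\Theta(k)$ by Weyl's Law, handle the truncated part with Lemma \ref{lem:third.derivative.bound} and the tail with the triple-product machinery behind Lemma \ref{lem:product.error}---is the same as the paper's. The gap is your uniform claim $\|(e_ie_j)_{(r)}\|_\infty=\mathcal{O}(1)$. Theorem \ref{thm:triple.products} gives decay of $c_{ij\ell}$ only in the regime $|\lambda_\ell|>(1+\epsilon)|\lambda_i+\lambda_j|$, i.e.\ only for $\ell$ beyond roughly $r$; it says nothing about the low modes $\ell\leq r$, where the only available controls are $|c_{ij\ell}|=\mathcal{O}(\|e_ie_j\|_2)$ and $\|e_\ell\|_\infty=\mathcal{O}(\lambda_\ell^{\nicefrac{(d-1)}{4}})$ from Theorem \ref{thm:uniform.bound}. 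The honest estimate is $\|(e_ie_j)_{(r)}\|_\infty\leq\|e_ie_j\|_\infty+\|e_ie_j-(e_ie_j)_{(r)}\|_\infty=\mathcal{O}(\lambda_k^{\nicefrac{(d-1)}{2}}+\nu_{r;N})=\mathcal{O}(k^{\nicefrac{(d-1)}{d}})$, which is $\mathcal{O}(1)$ only for $d=1$: sup-norms of eigenfunctions are not uniformly bounded (zonal spherical harmonics saturate Theorem \ref{thm:uniform.bound}). Because your bilinear expansion of $f^2$ has already spent the factor $k^2$, any extra polynomial growth per term is fatal: with the correct per-term bound your truncated part comes out as $\mathcal{O}\bigl(k^{2+\nicefrac{(d-1)}{d}}\|f\|_\infty^2k^{\nicefrac{(3d+5)}{2d}}\bigr)$, overshooting the stated bound by $k^{\nicefrac{(d-1)}{d}}$ for every $d\geq 2$.

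The repair is to not expand bilinearly on the truncated part. Set $g=(f^2)_{(r)}$ and bound its Fourier coefficients by applying Lemma \ref{lem:coefficient.bound} to the single function $f^2$, so that $|\widehat{(f^2)}_\ell|=\mathcal{O}(\|f\|_\infty^2)$ and hence $\|g\|_\infty=\mathcal{O}(r^2\|f\|_\infty^2)$; then Lemma \ref{lem:third.derivative.bound} applied to $g$ gives $\mathcal{O}(r^{\nicefrac{(3d+5)}{2d}}\,r^2\|f\|_\infty^2)$, which with $r=\mathcal{O}(k)$ is exactly the first term of the stated bound. The tail $\|\nabla_M\Delta_Mf^2-\nabla_M\Delta_Mg\|_\infty$ is then handled wholesale by Lemma \ref{lem:product.error} with $T=\nabla_M\Delta_M$ (this is where the bilinear expansion and the factor $k^2$ legitimately enter, matching the $\nu$ term). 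That is the paper's proof; your tail estimate is sound, and only the truncated-part bookkeeping needs this change.
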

\begin{proof}
  Let $q=k$.  
  Choose $r$ so that $|\lambda_{r}|>2|\lambda_{q}|$.  
  Let $g=f^2_{(r)}$.
  Then
  \begin{align*}
  \|(\nabla_M\Delta_Mf^2)\|_\infty
  &\leq\|\nabla_M\Delta_Mg\|_\infty+\|\nabla_M\Delta_Mf^2-\nabla_M\Delta_Mg\|_\infty\\
  &=\mathcal{O}\left(r^{\nicefrac{(3d+5)}{2d}}\|g\|_\infty\right)+\mathcal{O}\left(\GradientProductError{r}{\|f\|_\infty}\right)
  \end{align*}
  by Lemmas \ref{lem:third.derivative.bound} and \ref{lem:product.error}.
  Weyl's Law gives that the minimum possible choice of $r$ is $\mathcal{O}(k)$.
  Moreover $\|g\|_\infty=\|(f^2)_{(r)}\|_\infty=\mathcal{O}(r^2\|f\|_\infty^2)$ by an application of Lemma \ref{lem:coefficient.bound}.  
  Combining these observations yields the result.
\end{proof}

\subsection{Discretizations}
Consider  $x_1,x_2,\cdots,x_n\in M^n$.
In general 
$$\|f\|_\infty\geq\max_i|f(x_i)|.$$
The absolute value of the different between both sides of the inequality above is controlled by $\nabla f$ and the maximum distance between a general point in $M$ and a sample point $x_i$ by a straightforward application of the Mean Value Theorem.
Recall that a sequence $x_1,x_2,\ldots,x_n\in M$ is a \textit{$\delta$-net} if the covering radius of $x_1,x_2,\ldots,x_n$ is at most $\delta$.  

\begin{prop}
  \label{prop:discretization.error}
  Consider $x_1,\cdots,x_n$ is a $\delta$-net of $M$.  
  For each $f\in C^{\infty}(M)$,
  $$\|f\|_\infty-\max_i\|f(x_i)\|_\infty\leq\delta\|\nabla f\|_\infty.$$
\end{prop}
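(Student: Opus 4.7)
The plan is to reduce the bound to the $1$-Lipschitz property of $f$ with Lipschitz constant $\|\nabla_M f\|_\infty$ and then exploit the $\delta$-net hypothesis.

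First, because $M$ is compact and $f$ is continuous, the supremum defining $\|f\|_\infty$ is attained at some point $x^\star \in M$, and by replacing $f$ with $-f$ if necessary (which leaves $\|f\|_\infty$, $\|\nabla_M f\|_\infty$, and each $|f(x_i)|$ unchanged) I may assume $f(x^\star) = \|f\|_\infty \geq 0$. The $\delta$-net hypothesis then supplies an index $i_0$ with $\mathrm{dist}_M(x^\star, x_{i_0}) \leq \delta$.

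Next I would bound $f(x^\star) - f(x_{i_0})$ using the gradient. Given $\varepsilon > 0$, pick a smooth path $\gamma \colon [0,1] \to M$ from $x_{i_0}$ to $x^\star$ with $\int_0^1 \|\gamma'(t)\|\, dt \leq \delta + \varepsilon$ (such a path exists by the definition of $\mathrm{dist}_M$ as the infimum of path lengths). The chain rule applied to $f \circ \gamma$ gives
\begin{equation*}
  f(x^\star) - f(x_{i_0}) = \int_0^1 \nabla_M f(\gamma(t)) \cdot \gamma'(t)\, dt,
\end{equation*}
and Cauchy--Schwarz bounds the right-hand side by $\|\nabla_M f\|_\infty \int_0^1 \|\gamma'(t)\|\, dt \leq (\delta + \varepsilon)\|\nabla_M f\|_\infty$. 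Letting $\varepsilon \to 0$ yields $f(x^\star) - f(x_{i_0}) \leq \delta \|\nabla_M f\|_\infty$.

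Finally, combining the two steps,
\begin{equation*}
  \|f\|_\infty = f(x^\star) \leq f(x_{i_0}) + \delta \|\nabla_M f\|_\infty \leq \max_i |f(x_i)| + \delta \|\nabla_M f\|_\infty,
\end{equation*}
which rearranges to the claim. There is no real obstacle here; the only mild technical point is justifying the near-minimizing smooth path on $M$, which follows directly from the definition of $\mathrm{dist}_M$ in (\ref{eqn:shortest.path.length}) as an infimum over smooth paths, so taking a minimizing sequence and then $\varepsilon \to 0$ suffices.
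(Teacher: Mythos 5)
Your argument is correct and is exactly the argument the paper intends: the paper offers no formal proof, only the preceding remark that the bound follows from the Mean Value Theorem via $\nabla f$ and the covering radius, and your writeup (attain the sup, pass to a nearest net point, integrate the gradient along a near-minimizing path, let $\varepsilon\to 0$) is a complete version of that same reasoning. No gaps worth noting.
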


\begin{lem}
  \label{lem:gradient.discretization.error}
  Suppose $x_1,\cdots,x_n$ is a $\delta$-net of $M$.  
  Then
  $$\|\nabla_Mf\|^2_{\infty}-\max_i\|\nabla_Mf(x_i)\|^2=\mathcal{O}\left(\delta\DoubleGradientError{k}\right).$$
  for each $k$ and all $f\in E_{1;\Delta_M}+\cdots+E_{k;\Delta_M}$,
\end{lem}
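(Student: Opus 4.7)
By Proposition \ref{prop:laplacian.gradient}, the function $g := \|\nabla_M f\|^2 = \tfrac{1}{2}\Delta_M f^2 - f\Delta_M f$ is smooth on $M$, so applying Proposition \ref{prop:discretization.error} to $g$ immediately yields
\[
\|\nabla_M f\|^2_\infty - \max_i \|\nabla_M f(x_i)\|^2 \;=\; \|g\|_\infty - \max_i|g(x_i)| \;\leq\; \delta\,\|\nabla_M g\|_\infty,
\]
reducing the statement to an upper bound on $\|\nabla_M g\|_\infty$ of order $\DoubleGradientError$.

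Differentiating $g$ using the Leibniz rule gives
\[
\nabla_M g \;=\; \tfrac{1}{2}\nabla_M\Delta_M f^2 \;-\; (\nabla_M f)(\Delta_M f) \;-\; f\,(\nabla_M\Delta_M f),
\]
so by the triangle inequality
\[
\|\nabla_M g\|_\infty \;\leq\; \tfrac{1}{2}\|\nabla_M\Delta_M f^2\|_\infty + \|\nabla_M f\|_\infty\|\Delta_M f\|_\infty + \|f\|_\infty\|\nabla_M\Delta_M f\|_\infty.
\]

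Next I would bound the three summands using the derivative estimates already established in this subsection. Lemma \ref{lem:third.derivative.square.bound} controls the first term and is the source of the truncation-tail contribution $\nu(k,N)$. Lemma \ref{lem:third.derivative.bound} controls $\|\nabla_M\Delta_M f\|_\infty$ appearing in the third term. Lemma \ref{lem:first.derivative.bound} controls $\|\nabla_M f\|_\infty$ in the middle term, while the remaining factor $\|\Delta_M f\|_\infty$ is handled by expanding $f=\sum_{i\leq k}\hat f_i e_i$ and applying Lemma \ref{lem:coefficient.bound} together with Weyl's law $|\lambda_k|=\mathcal{O}(k^{2/d})$ and the eigenfunction sup-norm bound used in the proofs of the previous lemmas. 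Summing the three estimates and folding the dimension-dependent polynomial-in-$k$ factors into the implicit $\mathcal{O}$ constant produces the claimed bound $\delta\cdot\DoubleGradientError$.

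The main obstacle is purely bookkeeping. The two summands involving only a single copy of $f$ contribute the linear-in-$\|f\|_\infty$ piece $k^2\|f\|_\infty$, whereas the quadratic-in-$\|f\|_\infty$ piece $k^3\|f\|_\infty^2\nu(k,N)$ enters exclusively from the product-truncation tail inside $\|\nabla_M\Delta_M f^2\|_\infty$ coming from Lemma \ref{lem:product.error}. One must separate these contributions carefully and apply Weyl's law in each so that the three bounds collapse into the factored form $k^2\|f\|_\infty\bigl(1+k\|f\|_\infty\nu(k,N)\bigr)$ asserted in the statement; no ingredient beyond the derivative bounds already proved in this subsection is needed.
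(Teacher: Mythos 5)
Your reduction is the same as the paper's: write $\|\nabla_M f\|^2=\half\Delta_Mf^2-f\Delta_Mf$ via Proposition \ref{prop:laplacian.gradient}, bound the gap between the sup norm and the max over the $\delta$-net by $\delta\,\|\nabla_M(\|\nabla_Mf\|^2)\|_\infty$ via Proposition \ref{prop:discretization.error}, and control that gradient with Lemmas \ref{lem:third.derivative.bound} and \ref{lem:third.derivative.square.bound}. Where you genuinely diverge is the differentiation step, and there you are the more careful one: the paper's displayed computation passes from $\nabla_M(\half\Delta_Mf^2-f\Delta_Mf)$ to $\half\nabla_M\Delta_Mf^2-\nabla_M\Delta_Mf$, silently discarding the cross term $(\nabla_Mf)(\Delta_Mf)$ and the factor $f$ multiplying $\nabla_M\Delta_Mf$, whereas you keep both, which is why you also need Lemma \ref{lem:first.derivative.bound} and a bound on $\|\Delta_Mf\|_\infty$ (which does follow from Lemma \ref{lem:coefficient.bound}, Theorem \ref{thm:uniform.bound} and Weyl's Law).

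The weak point is your final bookkeeping paragraph. In your (correct) decomposition every summand is quadratic in $f$, so there are no terms "involving only a single copy of $f$," and the middle and third terms are of size $\mathcal{O}(k^{c(d)}\|f\|_\infty^2)$ with $c(d)>2$ and with no $\nu(k,N)$ factor; such $k$-dependent factors cannot be folded into the implicit $\mathcal{O}$ constant when the explicit dependence on $k$ is the content of the statement. So your three estimates do not literally collapse to $k^2\|f\|_\infty(1+k\|f\|_\infty\nu(k,N))$, and in particular the linear-in-$\|f\|_\infty$ piece of the stated bound is not produced by your route at all. To be fair, the paper shares this defect: its own final display, $\mathcal{O}\bigl(k^2\|f\|_\infty^2((1+\|f\|_\infty^{-1})k^{\nicefrac{(3d+5)}{2d}}+\nu(k,N))\bigr)$, also does not reduce to the stated bound, and its linear-in-$\|f\|_\infty$ term exists only because of the dropped factor of $f$. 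So you have reproduced, and in the calculus step improved on, the paper's argument; but you should record the bound your three estimates actually yield (quadratic in $\|f\|_\infty$, with explicit powers of $k$) rather than asserting it matches the displayed form.
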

\begin{proof}
  Lemmas \ref{lem:third.derivative.bound} and \ref{lem:third.derivative.square.bound} imply that
  \begin{align*}
    \nabla_M\|\nabla_Mf\|^2
	&= \nabla_M(\half\Delta_Mf^2-f\Delta_Mf)= \half\nabla_M\Delta_Mf^2-\nabla_M\Delta_Mf\\
	&=\mathcal{O}\left(k^2\|f\|^2_\infty(k^{\nicefrac{(3d+5)}{2d}}+\nu(k,N)\right)+\mathcal{O}\left(k^{\nicefrac{(3d+5)}{2d}}\|f\|_\infty\right)\\
	&=\mathcal{O}\left(k^2\|f\|^2_\infty((1+\|f\|_\infty^{-1})k^{\nicefrac{(3d+5)}{2d}}+\nu(k,N)\right)
  \end{align*}

  An application of Proposition \ref{prop:discretization.error} yields the result.
\end{proof}

\subsection{Perturbations}\label{subsec:errors}
Let $\operator$ be a finite rank negative semidefinite operator on $C^{\infty}(M)$.
Let
$$\hat{f}_{i;\operator}=\langle f,e_{i;\operator}\rangle\quad f_{(k);\operator}=\sum_{i=1}^k\hat{f}_{i;\operator}e_{i;\operator}\quad f^*=\sum_{i=1}^\infty\hat{f}_{i;\operator}e_i$$
for each $f\in C^{\infty}(M)$.

\begin{lem}
  \label{lem:coefficient.error}
  For each $f\in C^{\infty}(M)$ and $1\leq i\leq r\leq\rank_{\operator}$,
  $$|\hat{f}_i-\hat{f}_{i;\operator}|=\mathcal{O}\left(\|f\|_\infty\err_{r;\operator}\right).$$
\end{lem}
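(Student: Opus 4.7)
The plan is to rewrite the coefficient difference as a single $L^2$-inner product against the eigenvector perturbation and then convert supremum control on that perturbation into the desired bound. Concretely, orthonormality of the $e_j$'s and the $e_{j;\operator}$'s gives
$$\hat{f}_i-\hat{f}_{i;\operator}=\langle f,e_i\rangle-\langle f,e_{i;\operator}\rangle=\langle f,e_i-e_{i;\operator}\rangle=\int_M f(e_i-e_{i;\operator}),$$
so the problem reduces to estimating this integral.

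Next I would apply Hölder's inequality in its simplest form, pulling out $\|f\|_\infty$ and bounding the remaining integral by $\vol_M\|e_i-e_{i;\operator}\|_\infty$. Since $M$ is fixed and compact, $\vol_M$ is an absolute constant and may be absorbed into the $\mathcal{O}(\cdot)$.

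Finally, I would invoke Definition \ref{defn:spectral.errors} (used for $\operator$ itself, viewing $\operator$ on $C^\infty(M)$ as its own extension, so that it is admissible in the supremum defining $\err_{\operator;r}$): for every $1\leq i\leq r$ we have $\|e_i-e_{i;\operator}\|_\infty\leq\err_{\operator;r}$. Stringing this together yields
$$|\hat{f}_i-\hat{f}_{i;\operator}|\leq\vol_M\,\|f\|_\infty\,\err_{\operator;r}=\mathcal{O}(\|f\|_\infty\err_{\operator;r}),$$
which is the claim.

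Since the whole argument is two lines of Cauchy--Schwarz/Hölder plus an appeal to the definition of spectral error, I do not anticipate a real obstacle; the only subtlety worth mentioning is confirming that the sign/phase normalization used to select $e_{i;\operator}$ (implicit in $\err_{\operator;r}$) is the same one governing $\hat{f}_{i;\operator}$, so that the difference $e_i-e_{i;\operator}$ is the quantity actually bounded by $\err_{\operator;r}$ rather than $e_i+e_{i;\operator}$.
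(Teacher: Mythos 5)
Your argument is correct and essentially the paper's own: both write $\hat{f}_i-\hat{f}_{i;\operator}=\langle f,e_i-e_{i;\operator}\rangle$ and bound this by $\mathcal{O}(\|f\|_\infty\err_{\operator;r})$ using compactness of $M$ and Definition \ref{defn:spectral.errors}, the only cosmetic difference being that you use the $L^\infty$--$L^1$ H\"older bound (picking up $\vol_M$) where the paper uses Cauchy--Schwarz with $\|f\|_2=\mathcal{O}(\|f\|_\infty)$ and $\|e_i-e_{i;\operator}\|_2=\mathcal{O}(\|e_i-e_{i;\operator}\|_\infty)$. Your closing remark about the sign normalization of $e_{i;\operator}$ is a fair caveat that the paper also leaves implicit.
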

\begin{proof}
  The Cauchy-Schwarz Inequality implies 
  \begin{align*}
    |\hat{f}_i-\hat{f}_{i;\operator}|&\leq\langle f,e_i-e_{i;\operator}\rangle
	\leq\|f\|_2\|e_i-e_{i;\operator}\|_2=\mathcal{O}(\|f\|_\infty\|e_n-e_{n;\operator}\|_\infty)\\
	&\leq\mathcal{O}\left(\|f\|_\infty\err_{r;\operator}\right)
  \end{align*}
\end{proof}

\begin{lem}
  \label{lem:zeroth.error}
  For each $f\in E_{1;\operator}+\cdots+E_{r;\operator}$ with $\err_{r;\operator}<1$,
  $$\|f-f^*\|_\infty\in\mathcal{O}(r\err_{r;\operator}\|f\|_\infty).$$
\end{lem}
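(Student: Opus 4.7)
The plan is to expand $f$ in the eigenbasis of $\operator$, trade each $e_{i;\operator}$ for the corresponding $e_i$ one summand at a time, and control the resulting error using the $L_\infty$-bound built into the definition of $\err_{\operator;r}$. Concretely, since $f \in E_{1;\operator}+\cdots+E_{r;\operator}$, the expansion $f = \sum_{i=1}^r \hat f_{i;\operator}\, e_{i;\operator}$ is a finite sum with at most $r$ terms, and by definition $f^* = \sum_{i=1}^r \hat f_{i;\operator}\, e_i$, so
$$f - f^* = \sum_{i=1}^r \hat f_{i;\operator}\bigl(e_{i;\operator} - e_i\bigr).$$

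Next I would apply the triangle inequality in $\|\cdot\|_\infty$ and invoke the defining bound $\|e_{i;\operator} - e_i\|_\infty \leq \err_{\operator;r}$ for $i \leq r$, obtaining $\|f - f^*\|_\infty \leq \err_{\operator;r} \sum_{i=1}^r |\hat f_{i;\operator}|$. It then remains to bound $|\hat f_{i;\operator}|$ by a constant multiple of $\|f\|_\infty$. By definition $\hat f_{i;\operator} = \langle f, e_{i;\operator}\rangle$, so Cauchy--Schwarz gives $|\hat f_{i;\operator}| \leq \|f\|_2 \|e_{i;\operator}\|_2 = \mathcal{O}(\|f\|_\infty \|e_{i;\operator}\|_2)$ since $M$ has finite volume.

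The only nontrivial step is to show $\|e_{i;\operator}\|_2 = \mathcal{O}(1)$, and this is precisely where the hypothesis $\err_{\operator;r} < 1$ enters. By the triangle inequality and the volume-scaled comparison between $L_2$ and $L_\infty$ norms on $M$,
$$\|e_{i;\operator}\|_2 \leq \|e_i\|_2 + \|e_{i;\operator} - e_i\|_2 \leq 1 + \sqrt{\vol_M}\,\err_{\operator;r} = \mathcal{O}(1),$$
so each Fourier coefficient satisfies $|\hat f_{i;\operator}| = \mathcal{O}(\|f\|_\infty)$. Summing $r$ such terms yields the desired $\mathcal{O}(r\err_{\operator;r}\|f\|_\infty)$ bound.

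I do not expect a serious obstacle here; the argument is essentially the same flavor as Lemma \ref{lem:coefficient.error} (Cauchy--Schwarz plus the $L_\infty$-to-$L_2$ comparison), with the only subtlety being the implicit use of $\err_{\operator;r} < 1$ to keep $\|e_{i;\operator}\|_2$ bounded above by an absolute constant depending only on $M$. If one wanted a sharper constant, one could instead appeal to near-orthonormality of the perturbed family $\{e_{i;\operator}\}$ and to Parseval for $f$ expressed in that basis, but the crude bound above already suffices for the stated $\mathcal{O}$-asymptotic.
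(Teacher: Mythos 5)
Your proof is correct and follows essentially the same route as the paper: the same decomposition $f-f^*=\sum_{i=1}^r\hat f_{i;\operator}(e_{i;\operator}-e_i)$, the triangle inequality, and the eigenvector bound $\|e_{i;\operator}-e_i\|_\infty\leq\err_{\operator;r}$ from Definition \ref{defn:spectral.errors}. The only cosmetic difference is that you bound $|\hat f_{i;\operator}|=\mathcal{O}(\|f\|_\infty)$ directly by Cauchy--Schwarz using $\err_{\operator;r}<1$ to control $\|e_{i;\operator}\|_2$, whereas the paper obtains the same coefficient bound by combining Lemmas \ref{lem:coefficient.bound} and \ref{lem:coefficient.error}.
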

\begin{proof}
  Lemmas \ref{lem:coefficient.bound} and \ref{lem:coefficient.error} imply that
  $$\|f-f^*\|_\infty\leq\|\sum_{i=1}^r\hat{f}_{i;\operator}\|e_i-e_{i;\operator}\|_\infty\leq\mathcal{O}(r\err_{r;\operator}\|f\|_\infty).$$
\end{proof}

Let $q_{\operator;r;\epsilon}$ be the maximum positive integer $q$ for which 
$$2|\lambda_{q;\operator}|+\epsilon<|\lambda_{r;\operator}|.$$

Part of our estimator requires choosing an optimal number of eigenvectors for quadratically approximating $\Delta_M$.  
The following lemma tells us that this data-driven choice is suitable so long as our eigenvector error is controlled.

\begin{lem}
  \label{lem:k.criterion}
  For each $1\leq r\leq\rank_{\operator}$, $2|\lambda_{q_{\operator;r;\epsilon}}|+(\epsilon-3\err_{\operator;r})<|\lambda_{r}|$.
\end{lem}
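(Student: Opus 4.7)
The plan is to unpack the definition of $q_{\operator;r;\epsilon}$ and apply the spectral perturbation bound coming from Definition \ref{defn:spectral.errors}. Writing $q^* = q_{\operator;r;\epsilon}$ for brevity, the maximality clause in the definition forces $q^*$ itself to satisfy the defining strict inequality $2|\lambda_{q^*;\operator}| + \epsilon < |\lambda_{r;\operator}|$, so everything reduces to comparing the perturbed eigenvalues $\lambda_{q^*;\operator},\lambda_{r;\operator}$ with the true eigenvalues $\lambda_{q^*},\lambda_r$ of $\Delta_M$.

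Next, I would invoke Definition \ref{defn:spectral.errors} with any negative semidefinite extension $\hat{\operator}$ of $\operator$ of matching rank, so that the eigenvalues of $\hat\operator$ coincide with those of $\operator$ as discussed in the Conventions section. This yields $|\lambda_{i;\operator} - \lambda_i| \le \err_{\operator;r}$ for every $1 \le i \le r$. Since $\Delta_M$ is negative semidefinite and $\operator$ is assumed negative semidefinite, both $\lambda_i$ and $\lambda_{i;\operator}$ are non-positive, and therefore the bound transfers directly to absolute values: $\bigl| |\lambda_{i;\operator}| - |\lambda_i| \bigr| \le \err_{\operator;r}$.

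Now combine the two pieces. From the $i = q^*$ instance I get $2|\lambda_{q^*;\operator}| \ge 2|\lambda_{q^*}| - 2\err_{\operator;r}$, and from the $i = r$ instance I get $|\lambda_{r;\operator}| \le |\lambda_r| + \err_{\operator;r}$. Chaining these with the defining inequality,
\begin{equation*}
2|\lambda_{q^*}| - 2\err_{\operator;r} + \epsilon \;\le\; 2|\lambda_{q^*;\operator}| + \epsilon \;<\; |\lambda_{r;\operator}| \;\le\; |\lambda_r| + \err_{\operator;r},
\end{equation*}
and rearranging gives $2|\lambda_{q^*}| + (\epsilon - 3\err_{\operator;r}) < |\lambda_r|$, as required.

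There is no real obstacle; the only mild subtleties are keeping the signs straight (we are reasoning about negative semidefinite spectra, so the triangle inequality must be applied to $|\lambda_i|$ rather than $\lambda_i$ itself) and justifying that $\lambda_{i;\operator} = \lambda_{i;\hat\operator}$ for a rank-preserving extension, which is already noted in the Conventions section. Both of these are routine book-keeping and do not require any new ideas.
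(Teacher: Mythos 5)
Your proof is correct and follows essentially the same route as the paper: the paper's own argument is exactly the chain $2|\lambda_q|+\epsilon\leq 2|\lambda_{q;\operator}|+\epsilon+2\err_{\operator;r}<|\lambda_{r;\operator}|+2\err_{\operator;r}\leq|\lambda_{r}|+3\err_{\operator;r}$, which is the same combination of the maximality/defining inequality for $q_{\operator;r;\epsilon}$ with the eigenvalue-perturbation bound at indices $q$ and $r$. Your added remarks on signs and on rank-preserving extensions are just the book-keeping the paper leaves implicit.
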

\begin{proof}
  Let $q=q_{\operator;r;\epsilon}$.
  Then
  $$2|\lambda_q|+\epsilon\leq 2|\lambda_{q;\operator}|+\epsilon+2\err_{\operator;r}<|\lambda_{r;\operator}|+2\err_{\operator;r}\leq|\lambda_{r}|+3\err_{\operator;r}.$$  
\end{proof}

\subsubsection{Linear errors}
We bound errors associated to treating $\operator$ as a linear operator.  

\begin{lem}
  \label{lem:linear.error}
  For each $1\leq r\leq\rank_{\operator}$ and $f\in E_{1;\operator}+\cdots+E_{r;\operator}$ such that $\err_{r;\operator}<1$,
  $$\|\operator f-\Delta_Mf^*\|_\infty=\mathcal{O}\left(\LinearError{r}{\|f\|_\infty}\right)$$
\end{lem}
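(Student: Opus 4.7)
The plan is to expand both $\operator f$ and $\Delta_M f^*$ in the eigenbases of $\operator$ and $\Delta_M$ respectively, and bound the pointwise difference term by term. Since $f\in E_{1;\operator}+\cdots+E_{r;\operator}$, we may write $f=\sum_{i=1}^r\hat{f}_{i;\operator}e_{i;\operator}$, whence $\operator f=\sum_{i=1}^r\lambda_{i;\operator}\hat{f}_{i;\operator}e_{i;\operator}$. The key observation is that $\hat{f}_{i;\operator}=\langle f,e_{i;\operator}\rangle=0$ for $i>r$ (since the $e_{i;\operator}$'s are $L_2$-orthonormal and $f$ has no components beyond $E_{r;\operator}$), so the nominally infinite sum $f^*=\sum_{i=1}^\infty\hat{f}_{i;\operator}e_i$ truncates to $f^*=\sum_{i=1}^r\hat{f}_{i;\operator}e_i$, and hence $\Delta_Mf^*=\sum_{i=1}^r\lambda_i\hat{f}_{i;\operator}e_i$.

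Subtracting these two expressions, the differences to bound are of the form $\lambda_{i;\operator}e_{i;\operator}-\lambda_ie_i$. I would split each via the standard add–subtract trick
$$\lambda_{i;\operator}e_{i;\operator}-\lambda_ie_i=\lambda_{i;\operator}(e_{i;\operator}-e_i)+(\lambda_{i;\operator}-\lambda_i)e_i,$$
so that the triangle inequality yields
$$\|\operator f-\Delta_Mf^*\|_\infty\leq\sum_{i=1}^r|\hat{f}_{i;\operator}|\Bigl(|\lambda_{i;\operator}|\,\|e_{i;\operator}-e_i\|_\infty+|\lambda_{i;\operator}-\lambda_i|\,\|e_i\|_\infty\Bigr).$$

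Each of $\|e_{i;\operator}-e_i\|_\infty$ and $|\lambda_{i;\operator}-\lambda_i|$ is bounded by $\err_{\operator;r}$ for $i\leq r$ by Definition \ref{defn:spectral.errors}. A Cauchy–Schwarz argument identical to the proof of Lemma \ref{lem:coefficient.bound} (applied to the orthonormal family $e_{i;\operator}$ in place of $e_i$) yields $|\hat{f}_{i;\operator}|=\mathcal{O}(\|f\|_\infty)$. The remaining scalar factors $|\lambda_{i;\operator}|$ and $\|e_i\|_\infty$ are bounded uniformly in $i\leq r$ by constants depending only on $r$ and the geometry of $M$ (the former by $|\lambda_r|+\err_{\operator;r}\leq|\lambda_r|+1$ via the hypothesis $\err_{\operator;r}<1$, the latter by standard pointwise eigenfunction estimates used in the Appendix), and these get absorbed into the asymptotic $\mathcal{O}$. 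Summing the $r$ terms gives the claimed bound $\mathcal{O}(r\,\err_{\operator;r}\|f\|_\infty)$.

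The only slightly delicate step is recognizing that $f^*$ really is a finite sum against the $e_i$'s—this is what makes $\Delta_Mf^*$ well-defined and tractable, and it is exactly where the hypothesis $f\in E_{1;\operator}+\cdots+E_{r;\operator}$ is used. Once that reduction is in hand, the argument is mechanical triangle-inequality bookkeeping of essentially the same shape as the proof of Lemma \ref{lem:zeroth.error}, the only new ingredient being the eigenvalue-error term $|\lambda_{i;\operator}-\lambda_i|$.
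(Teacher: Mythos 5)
Your setup is the same as the paper's: expand $\operator f-\Delta_Mf^*$ over $i=1,\dots,r$ (noting, correctly, that $f^*$ is a finite sum because $\hat{f}_{i;\operator}=0$ for $i>r$), split each term as $\lambda_{i;\operator}(e_{i;\operator}-e_i)+(\lambda_{i;\operator}-\lambda_i)e_i$, bound $|\hat{f}_{i;\operator}|=\mathcal{O}(\|f\|_\infty)$ by Cauchy--Schwarz, and invoke Definition \ref{defn:spectral.errors} for the two error factors. The gap is in your last step: you absorb the factors $|\lambda_{i;\operator}|$ and $\|e_i\|_\infty$ into the implied constant on the grounds that they are ``bounded by constants depending only on $r$ and the geometry of $M$.'' An $\mathcal{O}$-bound whose constant is allowed to depend on $r$ is vacuous here, because the lemma is consumed downstream (Lemmas \ref{lem:quadratic.error}, \ref{lem:gradient.error}, \ref{lem:approximation.error}, Theorem \ref{thm:convergence}) precisely in the regime $r\to\infty$, where the $r$-dependence of every factor must be tracked. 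And these factors do grow with $r$: $|\lambda_{i;\operator}|\leq|\lambda_r|+\err_{\operator;r}$ with $|\lambda_r|=\Theta(r^{\nicefrac{2}{d}})$ by Weyl's Law, and Theorem \ref{thm:uniform.bound} only gives $\|e_i\|_\infty=\mathcal{O}(\lambda_i^{\nicefrac{(d-1)}{4}})$, which is not uniformly bounded in $i\leq r$ for a general manifold. So your argument does not actually deliver the clean $r\,\err_{\operator;r}\|f\|_\infty$ rate you state.

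For comparison, the paper's proof runs the identical decomposition but keeps these factors explicit, obtaining
$\mathcal{O}\bigl(r\|f\|_\infty\err_{\operator;r}(\err_{\operator;r}+|\lambda_r|^{\nicefrac{(d+5)}{4}})\bigr)$ and then, via Weyl's Law, $\mathcal{O}\bigl(\|f\|_\infty\err_{\operator;r}r^{\nicefrac{(3d+5)}{2d}}\bigr)$ --- i.e.\ a polynomially larger power of $r$ than the $r^1$ appearing in the lemma's display (a discrepancy internal to the paper, but one that does not affect the downstream rates, since in Lemma \ref{lem:quadratic.error} the quadratic term (\ref{eqn:quadratic.second.rate}) dominates either version of the linear error). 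To repair your proposal you would need either to prove $r$-independent uniform bounds on $|\lambda_i|$ and $\|e_i\|_\infty$ for $i\leq r$ (false: the spectrum is unbounded) or to carry the polynomial growth through as the paper does and accept the corresponding power of $r$ in the conclusion.
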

\begin{proof}
  Note that
  \begin{align*}
	        \|\operator f-\Delta_Mf^*\|_\infty 
			&\leq \sum_{i=1}^r\hat{f}_{i;\operator}(\lambda_{i;\operator}e_{i;\operator}-\lambda_ie_i)
			\leq \sum_{i=1}^r\hat{f}_{i;\operator}(\lambda_{i;\operator}(e_{i;\operator}-e_i)+(\lambda_{i;\operator}-\lambda_i)e_i)\\
			&=\mathcal{O}\left(r\|f\|_\infty((\max_{1\leq i\leq r}|\lambda_i|+\err_{\operator;r})\err_{\operator;r}+\err_{\operator;r}(\max_{1\leq i\leq r}\|e_i\|_\infty))\right)\\
			&=\mathcal{O}\left(r\|f\|_\infty((|\lambda_r|+\err_{\operator;r})\err_{\operator;r}+\err_{\operator;r}|\lambda_r|^{\nicefrac{(d+1)}{4}})\right)\\
			&=\mathcal{O}\left(r\|f\|_\infty\err_{\operator;r}(\err_{\operator;r}+|\lambda_r|^{\nicefrac{(d+5)}{4}})\right)\\
			&\leq\mathcal{O}\left(\|f\|_\infty\err_{\operator;r}r^{\nicefrac{(3d+5)}{2d}})\right)\\
  \end{align*}
\end{proof}

\subsubsection{Quadratic errors}
We then bound the errors associated to treating $\operator$ in quadratic expressions.  

\begin{lem}
  \label{lem:triple.error}
  Fix $r\leq\rank_{\operator}$.
  Suppose $\err_{\operator;r}\leq 1$.
  For all $1\leq i,j\leq r$ and all $k$,
  $$|c_{ijk}-c_{ijk;\operator}|=\mathcal{O}\left(\err_{\operator;r}\right)$$
\end{lem}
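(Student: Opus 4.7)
The plan is to bound $c_{ijk} - c_{ijk;\operator} = \int_M(e_i e_j e_k - e_{i;\operator} e_{j;\operator} e_{k;\operator})$ via the standard three-term telescoping identity
$$e_i e_j e_k - e_{i;\operator} e_{j;\operator} e_{k;\operator} = (e_i - e_{i;\operator}) e_j e_k + e_{i;\operator}(e_j - e_{j;\operator}) e_k + e_{i;\operator} e_{j;\operator}(e_k - e_{k;\operator}),$$
integrate, and then bound each of the three resulting integrals using Cauchy--Schwarz together with the control on individual eigenfunction differences supplied by Definition \ref{defn:spectral.errors}.

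For the first integral, applying Cauchy--Schwarz twice gives $|\int_M (e_i - e_{i;\operator}) e_j e_k| \leq \|e_i - e_{i;\operator}\|_\infty \|e_j\|_2 \|e_k\|_2 = \|e_i - e_{i;\operator}\|_\infty$, which is $\mathcal{O}(\err_{\operator;r})$ since $i \leq r$. The second integral is handled symmetrically; the extra factor $\|e_{i;\operator}\|_\infty \leq \|e_i\|_\infty + \err_{\operator;r}$ is bounded by a constant depending only on $r$ via the Hörmander-type eigenfunction bound used implicitly in Corollary \ref{cor:gradient.bound}, and the hypothesis $\err_{\operator;r} \leq 1$ keeps the constant uniform. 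The third integral is similarly bounded by $\|e_{i;\operator} e_{j;\operator}\|_\infty \cdot \|e_k - e_{k;\operator}\|_2 \cdot \mathrm{vol}_M^{1/2}$, so it remains only to control $\|e_k - e_{k;\operator}\|$.

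I expect the only delicate point to be the third term when $k > r$, since Definition \ref{defn:spectral.errors} only controls $\|e_k - e_{k;\operator}\|_\infty$ for $k \leq r$. For $k \leq r$ the bound above is immediate and yields $\mathcal{O}(\err_{\operator;r})$ with a constant depending only on $r$. For $k > r$ I would decompose $e_{i;\operator} e_{j;\operator}$ into its low-frequency $\Delta_M$-partial sum up to index $r$ (to which the previous bound applies after re-expansion) plus a tail, and then estimate the tail via the triple-product decay of Lemma \ref{lem:eigenproduct.error} together with Bessel's inequality, using that $e_{i;\operator} e_{j;\operator}$ is a smooth function whose Fourier coefficients against $e_k$ decay rapidly in $k$, while $\|e_{k;\operator}\|_2 = 1$. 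In the applications made of this lemma the index $k$ ranges only over the same window as $i,j$, i.e. $k \leq r$, so the straightforward telescoping bound suffices and the $k > r$ subtlety does not actually enter the downstream arguments.
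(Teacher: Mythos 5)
Your core argument --- telescope the difference of triple products and kill each term with Cauchy--Schwarz plus the eigenfunction control in Definition \ref{defn:spectral.errors} --- is the same strategy the paper uses, and your first two terms are literally the paper's proof. The difference is interpretive: the paper's own proof bounds $\int_M(e_ie_j-e_{i;\operator}e_{j;\operator})e_k$, i.e.\ it leaves the third factor as the \emph{true} eigenfunction $e_k$, so there are only two telescoping terms, the bound holds for all $k$ with no extra work (Cauchy--Schwarz against $\|e_k\|_2=1$), and no sup-norm eigenfunction bounds are needed. You instead perturb all three factors, which is what creates your ``delicate'' $k>r$ case. Be aware that under that reading the statement genuinely cannot be salvaged for $k>r$: $\err_{\operator;r}$ gives no control whatsoever over $e_{k;\operator}$ for $k>r$ (such an eigenvector need not even exist once $k$ exceeds $\rank_\operator$), so the sketched decomposition via Lemma \ref{lem:eigenproduct.error} and Bessel cannot produce an $\mathcal{O}(\err_{\operator;r})$ bound; your observation that only $k\le r$ is used downstream (Lemma \ref{lem:quadratic.error}) is the honest resolution, and for $k\le r$ your third term is handled correctly. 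One further quantitative point: you bound the second and third terms using $\|e_{i;\operator}\|_\infty$ and H\"ormander-type sup-norm bounds, which injects factors of order $\lambda_r^{\nicefrac{(d-1)}{4}}$ into the implied constant; the paper instead uses only $\|e_{i;\operator}\|_2\le 1+\err_{\operator;r}\le 2$, so its constant is independent of $r$. Since the lemma is later applied with $r\to\infty$ and its constant multiplied by $r^3|\lambda_r|^{\nicefrac{(d+5)}{4}}$ in Lemma \ref{lem:quadratic.error}, you should replace the sup-norm estimates by the $L_2$ ones to avoid silently degrading the downstream rate.
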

\begin{proof}
  The Cauchy-Schwarz Inequality implies that
  \begin{align*}
    |c_{ijk}-c_{ijk;\operator}|
	&=\lvert\int_M (e_ie_j-e_{i;\operator}e_{j\operator})e_k\rvert\\
	&\leq\|(e_ie_j-e_{i;\operator}e_{j;\operator})\|_2\|e_k\|_2\\
	&=\|(e_ie_j-e_{i;\operator}e_{j;\operator})\|_2\\
	&=\|(e_ie_j-e_{i;\operator}e_{j;\operator}+e_{i;\operator}e_{j}-e_{i;\operator}e_j\|_2\\
	&=\|(e_i-e_{i;\operator})e_j\|_2+\|e_{i;\operator}(e_j-e_{j;\operator})\|_2\\
	&\leq \epsilon_{\operator;r}\|e_j\|_2+\epsilon_{\operator;r}\|e_{i;\operator}\|_2\\
	&\leq \mathcal{O}\left(\epsilon_{\operator;r}\|e_i\|_2\right)=\mathcal{O}\left(\epsilon_{\operator;r}\right).
  \end{align*}
\end{proof}

\begin{lem}
  \label{lem:quadratic.error}
  For all $1\leq q\leq r\leq\rank_{\operator}$ such that $2|\lambda_{q;\operator}|+\epsilon<|\lambda_{r;\operator}|$,
  $$\|\operator(f^2)_{(r);\operator}-\Delta_M(f^*)^2\|_\infty=\mathcal{O}\left(\QuadraticError{}\right)$$
  for all $f\in E_{1;\operator}+\cdots+E_{q;\operator}$ if $\err_{\operator;r}<\nicefrac{\epsilon}{3}$ and $\epsilon<3$.    
\end{lem}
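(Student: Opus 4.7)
My proof plan is to split the target difference by triangle inequality into two pieces,
\[
\|\operator(f^2)_{(r);\operator}-\Delta_M(f^*)^2\|_\infty
\;\leq\;
\|\operator(f^2)_{(r);\operator}-\Delta_M((f^*)^2)_{(r)}\|_\infty
\;+\;
\|\Delta_M((f^*)^2)_{(r)}-\Delta_M(f^*)^2\|_\infty,
\]
and treat each piece separately. The right-hand term is a truncation error for $\Delta_M$ acting on the square of the single function $f^*\in E_{1;\Delta_M}+\cdots+E_{q;\Delta_M}$, so it is handled directly by Lemma~\ref{lem:product.error} provided $2|\lambda_q|<|\lambda_r|$. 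This spectral gap transfers from the hypothesis $2|\lambda_{q;\operator}|+\epsilon<|\lambda_{r;\operator}|$ by applying the eigenvalue bound $|\lambda_i - \lambda_{i;\operator}|\leq\err_{\operator;r}$ at $i=q$ and $i=r$, together with $\err_{\operator;r}<\nicefrac{\epsilon}{3}$; this step mirrors Lemma~\ref{lem:k.criterion}. Lemma~\ref{lem:zeroth.error} bounds $\|f^*\|_\infty\leq(1+\mathcal{O}(r\err_{\operator;r}))\|f\|_\infty$, absorbing the $\|f^*\|_\infty^2$ factor into $\|f\|_\infty^2$. This gives the $r^3\|f\|_\infty^2\nu(r,N)$ contribution.

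For the left piece, the plan is to expand both terms in their native spectral bases:
\[
\operator(f^2)_{(r);\operator}=\sum_{k=1}^{r}\sum_{1\leq i,j\leq q}\hat f_{i;\operator}\hat f_{j;\operator}\,\lambda_{k;\operator}c_{ijk;\operator}\,e_{k;\operator},\qquad
\Delta_M((f^*)^2)_{(r)}=\sum_{k=1}^{r}\sum_{1\leq i,j\leq q}\hat f_{i;\operator}\hat f_{j;\operator}\,\lambda_k c_{ijk}\,e_k,
\]
so their difference reduces to $r q^2\leq r^3$ summands of the form $\hat f_{i;\operator}\hat f_{j;\operator}\bigl(\lambda_k c_{ijk}e_k-\lambda_{k;\operator}c_{ijk;\operator}e_{k;\operator}\bigr)$. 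Each summand is bounded via the standard three-term telescoping
\[
\lambda_k c_{ijk}e_k-\lambda_{k;\operator}c_{ijk;\operator}e_{k;\operator}
=(\lambda_k-\lambda_{k;\operator})c_{ijk}e_k
+\lambda_{k;\operator}(c_{ijk}-c_{ijk;\operator})e_k
+\lambda_{k;\operator}c_{ijk;\operator}(e_k-e_{k;\operator}),
\]
where $|\lambda_k-\lambda_{k;\operator}|\leq\err_{\operator;r}$, $\|e_k-e_{k;\operator}\|_\infty\leq\err_{\operator;r}$ by definition, and $|c_{ijk}-c_{ijk;\operator}|=\mathcal O(\err_{\operator;r})$ by Lemma~\ref{lem:triple.error}. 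Bounding the unperturbed factors via $|\lambda_k|\leq|\lambda_r|=\mathcal O(r^{\nicefrac{2}{d}})$, the uniform eigenfunction bounds $\|e_k\|_\infty=\mathcal{O}(|\lambda_k|^{\nicefrac{(d-1)}{4}})$ already used in Section~\ref{subsec:errors}, and $|c_{ijk}|=\mathcal O(1)$ from $\|e_ie_j\|_2\leq\|e_i\|_\infty$, gives an overall per-summand bound proportional to $\|f\|_\infty^2\err_{\operator;r}$ times a polynomial factor in $r$, while Lemma~\ref{lem:coefficient.bound} controls $|\hat f_{i;\operator}|=\mathcal O(\|f\|_\infty)$. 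After summing the $rq^2$ terms and invoking Weyl's Law, these contributions bundle into the claimed $\mathcal O\!\bigl(r^4\|f\|_\infty^2\err_{\operator;r}\bigr)$.

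Combining both pieces yields the desired $\mathcal{O}(\QuadraticError{})$ bound. The main obstacle I anticipate is the bookkeeping in the second step: carefully propagating the eigenvalue factor $|\lambda_k|$ and the $L_\infty$ bound on $e_k$ through the telescope so that, after multiplication by $q^2$ from the $i,j$ summation and a further factor of $r$ from the $k$ summation, the growth stays within $r^4$ uniformly in $d$. A secondary subtlety is verifying the spectral-gap transfer cleanly enough to apply Lemma~\ref{lem:product.error}, but the condition $\err_{\operator;r}<\nicefrac{\epsilon}{3}$ is engineered precisely for that purpose and the argument is a direct mirror of Lemma~\ref{lem:k.criterion}.
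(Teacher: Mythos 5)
Your proposal is correct and follows essentially the same route as the paper: the tail term $\|\Delta_M((f^*)^2)_{(r)}-\Delta_M(f^*)^2\|_\infty$ is handled by Lemma~\ref{lem:product.error} after transferring the spectral gap via the Lemma~\ref{lem:k.criterion} argument, and the remaining perturbation term is controlled by Lemma~\ref{lem:triple.error}, the eigenvalue/eigenfunction errors $\err_{\operator;r}$, the uniform bounds of Theorem~\ref{thm:uniform.bound}, and Weyl's Law. The only difference is organizational: the paper passes through the intermediate function $\Delta_Mg^*$ with $g=(f^2)_{(r);\operator}$ (splitting into a Lemma~\ref{lem:linear.error} step and a triple-product step), whereas you fold both into a single three-term telescope, and your polynomial-in-$r$ bookkeeping is no looser than the paper's own.
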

\begin{proof}
  Let $g=(f^2)_{(r);\operator}$.
  Lemmas \ref{lem:coefficient.error}, \ref{lem:linear.error} imply that
  \begin{align}
	\|\operator g-\Delta_Mg^*\|_\infty
	&\label{eqn:quadratic.first.rate}=\mathcal{O}\left(\LinearError{r}{\|g\|_\infty}\right)
	=\mathcal{O}\left(\LinearError{r}{(r\|f\|^2_\infty)}\right)
  \end{align}

  Lemma \ref{lem:triple.error}, Theorem \ref{thm:uniform.bound} and Weyl's Law imply that
  \begin{align}
	  \|(\Delta_M((f^*)^2)_{(r)}-\Delta_Mg^*\|_\infty
	  &\leq\sum_{1\leq i,j\leq q} \hat{f}_{i;\operator}\hat{f}_{j;\operator}\sum_{k=1}^r|c_{ijk}-c_{ijk;\operator}||\lambda_k|\|e_k\|_\infty\\
	  &=\label{eqn:quadratic.product.error}\mathcal{O}\left(r^3(\|f\|_\infty+\err_{\operator;r})^2\err_{\operator;r}|\lambda_r|^{\nicefrac{(d+5)}{4}}\right)\\
	  &=\mathcal{O}\left(r^{\nicefrac{(7d+5)}{2d}}(\|f\|_\infty+\err_{\operator;r})^2\err_{\operator;r}|\right)\\
	  &\label{eqn:quadratic.second.rate}=\mathcal{O}\left(r^{\nicefrac{(7d+5)}{2d}}\|f\|_\infty^2\err_{\operator;r}\right)
  \end{align}

  Lemma \ref{lem:k.criterion} implies $2|\lambda_{q}|+\epsilon<|\lambda_{r}|$. 
  Thus Lemmas \ref{lem:product.error},\ref{lem:coefficient.error} and Weyl's Law $q=\mathcal{O}(r)$ imply
  \begin{align}
	  \|\Delta_M(f^*)^2-(\Delta_M((f^*)^2)_{(r)}\|_\infty&=\mathcal{O}\left(\GradientProductError{r}{\|f^*\|_\infty}\right)\\
	  &=\mathcal{O}\left(r^2q^2\|f\|_\infty^2\nu_{r;N}\right)\\
	  &\label{eqn:quadratic.third.rate}=\mathcal{O}\left(r^4\|f\|_\infty^2\nu_{r;N}\right)
  \end{align}

  The rate (\ref{eqn:quadratic.second.rate}) dominates the rate (\ref{eqn:quadratic.first.rate}).
  Therefore the result follows by adding (\ref{eqn:quadratic.second.rate}) and (\ref{eqn:quadratic.third.rate}).  
\end{proof}

\begin{lem}
  \label{lem:gradient.error}
  For all $1\leq q\leq r\leq\rank_{\operator}$ with $2|\lambda_q|<|\lambda_r|$ and $f\in E_{1;\operator}+\cdots+E_{q;\operator}$,
  $$\|\dirac{\operator}{r}^2f-\|\nabla_Mf^*\|^2\|_\infty=\mathcal{O}\left(\QuadraticError{}\right).$$
\end{lem}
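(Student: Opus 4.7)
The plan is to apply Proposition \ref{prop:laplacian.gradient} to the smooth function $f^*$ to obtain
$$\|\nabla_Mf^*\|^2=\half\Delta_M(f^*)^2-f^*\Delta_Mf^*,$$
and then compare termwise with the definition $\dirac{\operator}{r}^2f=\half\operator(f^2)_{(r);\operator}-(f\operator f)_{(r);\operator}$. The triangle inequality reduces the target bound to controlling the two quantities
$$A=\|\operator(f^2)_{(r);\operator}-\Delta_M(f^*)^2\|_\infty\quad\text{and}\quad B=\|(f\operator f)_{(r);\operator}-f^*\Delta_Mf^*\|_\infty,$$
each by $\mathcal{O}(\QuadraticError{})$. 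Since $A=\mathcal{O}(\QuadraticError{})$ is exactly the statement of Lemma \ref{lem:quadratic.error}, all the work is in $B$.

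To estimate $B$, I would insert two intermediate terms and split $B\leq B_1+B_2+B_3$ where
\begin{align*}
B_1&=\|(f\operator f)_{(r);\operator}-(f^*\Delta_Mf^*)_{(r);\operator}\|_\infty,\\
B_2&=\|(f^*\Delta_Mf^*)_{(r);\operator}-(f^*\Delta_Mf^*)_{(r)}\|_\infty,\\
B_3&=\|(f^*\Delta_Mf^*)_{(r)}-f^*\Delta_Mf^*\|_\infty.
\end{align*}
The tail $B_3$ is handled by Lemma \ref{lem:product.error} applied to the expansion $f^*\Delta_Mf^*=\sum_{i,j\leq q}\hat{f}_{i;\operator}\hat{f}_{j;\operator}\lambda_je_ie_j$, a linear combination of products of $\Delta_M$-eigenfunctions with indices in $\{1,\ldots,q\}$; the hypothesis $2|\lambda_q|<|\lambda_r|$ is precisely what the lemma requires, and the output is a $\nu_{r;N}$ contribution. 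The change-of-basis error $B_2$ is controlled by replacing each $e_{i;\operator}$ with $e_i$ one at a time via Lemma \ref{lem:coefficient.error} (each swap costs $\err_{\operator;r}$ in sup-norm) and summing over $i\leq r$, with $\|f^*\Delta_Mf^*\|_\infty$ bounded by a polynomial in $r$ via the eigenfunction sup-bound of Theorem \ref{thm:uniform.bound} and Weyl's Law. Finally $B_1$ is handled by decomposing $f\operator f-f^*\Delta_Mf^*=f(\operator f-\Delta_Mf^*)+(f-f^*)\Delta_Mf^*$ and applying Lemma \ref{lem:linear.error} and Lemma \ref{lem:zeroth.error} respectively, then absorbing the sup-norm cost of the truncation $(-)_{(r);\operator}$ into an $r$-polynomial factor by expanding in the $e_{i;\operator}$ basis and using the Cauchy--Schwarz and Bessel inequalities.

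The main obstacle is purely exponent bookkeeping: every step above introduces polynomial-in-$r$ factors coming from the eigenfunction sup-bound $\|e_i\|_\infty=\mathcal{O}(|\lambda_i|^{(d-1)/4})$, the Weyl estimate $|\lambda_r|=\mathcal{O}(r^{\nicefrac{2}{d}})$, and the number of summands in Fourier expansions. The nontrivial task is to verify that the resulting aggregate exponent fits under the target envelope $r^4\|f\|_\infty^2(\nu_{r;N}+\err_{\operator;r})$, which is the same exponent accounting that appears in the proof of Lemma \ref{lem:quadratic.error} and in particular uses $q=\mathcal{O}(r)$ deduced from $2|\lambda_q|<|\lambda_r|$ via Weyl's Law. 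Beyond this accounting, no new qualitative ingredient is required past what Lemmas \ref{lem:coefficient.bound}--\ref{lem:triple.error} already supply.
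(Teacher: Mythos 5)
Your proposal is correct and takes essentially the same route as the paper: the paper's proof is exactly the triangle-inequality split into $\half\|\operator(f^2)_{(r);\operator}-\Delta_M(f^*)^2\|_\infty$, handled by Lemma \ref{lem:quadratic.error}, plus the cross term, which it bounds by $\|f\|_\infty\|\operator f-\Delta_Mf^*\|_\infty$ and Lemma \ref{lem:linear.error} and then notes the quadratic bound dominates. Your finer $B_1+B_2+B_3$ treatment just spells out the projection and $f$ versus $f^*$ discrepancies that the paper silently absorbs into that one-line estimate.
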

\begin{proof}
  Note that
  \begin{align*}
	 \|\dirac{\operator}{r}^2f-\|\nabla_Mf^*\|^2\|_\infty
	&\leq \half\|\operator(f^2)_{(r)}-\Delta_M(f^*)^2\|_\infty+\|f\|_\infty\|\operator f-\Delta_Mf^*\|
  \end{align*}
  Among the respective upper bounds for the summands in the last line given by Lemmas \ref{lem:linear.error} and \ref{lem:quadratic.error}, the upper bound for the first summand dominates and gives the desired result.  
\end{proof}

\subsection{Loss}\label{subsec:bias}

We can bound the error in estimating $\|\nabla_M\|^2$ from $\dirac{\operator}{r}^2$ as follows.

\begin{lem}
  \label{lem:approximation.error}
  For each $\delta$-net $x_1,\ldots,x_n$ of $M$ and $\epsilon>0$,
  $$E(x_1,\ldots,x_n)=\mathcal{O}\left(\ApproximationError{}\right)$$
  where $E(x_1,\ldots,x_n)=|\max_{1\leq i\leq r}(\dirac{\operator}{r}^2f)(x_i)-\|\nabla_Mf^*\|^2_\infty|$, if $f\in E_{1;\operator}+\cdots+E_{q;\operator}$, $|\lambda_{r;\operator}|>2|\lambda_{q;\operator}|+\epsilon$, and $\err_{\operator;r}<\nicefrac{\epsilon}{3}$.  
\end{lem}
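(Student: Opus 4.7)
The plan is to decompose the quantity $E(x_1,\ldots,x_n)$ via the triangle inequality into (i) a pointwise Laplacian-perturbation error controlled by Lemma \ref{lem:gradient.error}, and (ii) a discretization error controlled by Lemma \ref{lem:gradient.discretization.error}, and then show that both pieces fit within the target rate $r^{4}\|f\|_\infty^2(\nu(r,N)+\err_{\operator;r}+\delta)$.

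First, I would insert the reference quantity $\max_i\|\nabla_M f^*(x_i)\|^2$ and write
$$E(x_1,\ldots,x_n) \le \max_{1\le i\le n}\bigl|(\dirac{\operator}{r}^2 f)(x_i) - \|\nabla_M f^*(x_i)\|^2\bigr| + \bigl|\max_{1\le i\le n}\|\nabla_M f^*(x_i)\|^2 - \|\nabla_M f^*\|_\infty^2\bigr|.$$
The first summand is bounded by the uniform norm $\|\dirac{\operator}{r}^2 f-\|\nabla_M f^*\|^2\|_\infty$. Under the hypotheses $|\lambda_{r;\operator}| > 2|\lambda_{q;\operator}| + \epsilon$ and $\err_{\operator;r} < \epsilon/3$, Lemma \ref{lem:gradient.error} gives a bound of $\mathcal{O}(\QuadraticError)$, which is already of the desired form.

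For the second summand I would apply Lemma \ref{lem:gradient.discretization.error} directly to $f^*$. Because $f\in E_{1;\operator}+\cdots+E_{q;\operator}$, by definition $f^* = \sum_{i=1}^q \hat{f}_{i;\operator} e_i$ lies in $E_{1;\Delta_M}+\cdots+E_{q;\Delta_M}$, so the lemma applies with $k=q$ and yields a bound of order $\delta\, q^2\|f^*\|_\infty(1 + q\|f^*\|_\infty\nu(q,N))$. Lemma \ref{lem:zeroth.error} (using $\err_{\operator;r}<1$) gives $\|f^*\|_\infty = \mathcal{O}(\|f\|_\infty)$, and Weyl's Law yields $q=\mathcal{O}(r)$, so this contribution is $\mathcal{O}(\delta r^3 \|f\|_\infty^2\nu(r,N)+\delta r^2\|f\|_\infty)$, which is absorbed into $r^{4}\|f\|_\infty^2(\nu(r,N)+\delta)$.

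Adding the two contributions and noting that every resulting term is dominated by one of $r^4\|f\|_\infty^2 \nu(r,N)$, $r^{\nicefrac{(7d+5)}{2d}}\|f\|_\infty^2\err_{\operator;r}$, or $r^4\|f\|_\infty^2\delta$, all of which fit under $r^4\|f\|_\infty^2(\nu(r,N)+\err_{\operator;r}+\delta)$ once $r\ge 1$ and $d\ge 1$ (so that the exponent $\nicefrac{(7d+5)}{2d}\le 4$ fails only for small $d$; in the remaining case one absorbs a harmless constant factor of $r$), produces the claimed bound. The main obstacle I anticipate is the bookkeeping of the various exponents of $r$ and $q$ arising from Weyl's Law together with the cross-terms from $\|f^*\|_\infty$ versus $\|f\|_\infty$; once those are systematically dominated by $r^4\|f\|_\infty^2$, the rest is immediate from the triangle-inequality split.
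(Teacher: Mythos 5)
Your proof is essentially the paper's own argument: insert the intermediate quantity $\max_i\|\nabla_M f^*(x_i)\|^2$, bound the first gap by Lemma \ref{lem:gradient.error} and the second by Lemma \ref{lem:gradient.discretization.error}, and use Weyl's Law to compare $q$ and $r$. The only difference is that you re-expand the exponents inside Lemma \ref{lem:gradient.error} (leading to the $\nicefrac{(7d+5)}{2d}$-versus-$4$ worry), which is unnecessary since that lemma's stated bound $\mathcal{O}(\QuadraticError{})$ is already of the target form.
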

\begin{proof}
  Define the quantities $A,B,C$ by
  \begin{equation*}
    A=\max_i(\dirac{\operator}{r}^2f)(x_i)\quad
	B=\max_i\|\nabla_Mf^*(x_i)\|^2\quad
	C=\|\nabla_Mf^*\|_\infty
  \end{equation*}

  Lemmas \ref{lem:gradient.error}, \ref{lem:gradient.discretization.error}, and Weyl's Law $r=\mathcal{O}(q)$ imply
  \begin{align*}
    |A-B|&=\mathcal{O}\left(\QuadraticError{}\right)\\
	|B-C|&=\mathcal{O}(\delta\DoubleGradientError{r}).
  \end{align*}
  Then $E(x_1,\ldots,x_n)=|A-C|\leq|A-B|+|B-C|$ and so the result follows.
\end{proof}

For each function $g:M\ra\R$ and $x,y\in M$, let 
$$g(x,y)=|g(x)-g(y)|.$$  

\begin{lem}
  \label{lem:lower.bound}
  Consider the following data.
  \begin{enumerate}
	  \item $\delta$-net $x_1,\ldots,x_n$ of $M$
	  \item $g\in E_{1;\operator}+\cdots+E_{q;\operator}$ such that $(\nabla_{\operator;r}g)(x_i)\leq 1$ for each $1\leq i\leq q$
	  \item $1\leqslant a<b\leqslant n$
  \end{enumerate}
  There exists $f_g\in E_{1;\Delta_M}+\cdots+E_{q;\Delta_M}$ such that $\|\nabla_Mf_g\|_\infty\leqslant 1$ and for $\delta,\err_{\operator;r}$ sufficiently small and $r$ sufficiently large (and in particular $2|\lambda_q|+\half<|\lambda_r|$), the value of $\ell(x_a,x_b)=|g(x_a,x_b)-|f_g(x_a,x_b)|$ satisfies
  $$\ell(x_a,x_b)=\mathcal{O}(\LowerBoundError{g})$$
\end{lem}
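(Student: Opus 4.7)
The plan is to construct $f_g$ as a normalized smooth interpolant of $g$ that replaces the eigenvectors of $\operator$ by the eigenfunctions of $\Delta_M$, and then control $\ell(x_a,x_b)$ by combining the Fourier-coefficient perturbation bound (Lemma \ref{lem:zeroth.error}) with the gradient-squared discretization bound (Lemma \ref{lem:approximation.error}).

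First I would set $g^{\ast}=\sum_{i=1}^q\hat{g}_{i;\operator}e_i$ (as in Subsection~\ref{subsec:errors}) and define
\[
\alpha = \max\bigl(1,\|\nabla_M g^{\ast}\|_\infty\bigr), \qquad f_g = \alpha^{-1} g^{\ast}.
\]
Then $f_g$ is a scalar multiple of $g^{\ast}$, so it lies in $E_{1;\Delta_M}+\cdots+E_{q;\Delta_M}$, and by linearity of $\nabla_M$, $\|\nabla_M f_g\|_\infty = \alpha^{-1}\|\nabla_M g^{\ast}\|_\infty \le 1$ by construction. Hence the two structural requirements on $f_g$ are immediate.

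Next I would control the two natural sources of error, namely the substitution of $g^{\ast}$ for $g$ and the rescaling factor $\alpha$. The hypothesis $(\dirac{\operator}{r}g)(x_i) \le 1$ squares to $\max_i(\dirac{\operator}{r}^2 g)(x_i) \le 1$, so Lemma \ref{lem:approximation.error} applied with $f=g$ yields
\[
\|\nabla_M g^{\ast}\|_\infty^2 \;\le\; 1 + \mathcal{O}\bigl(r^4\|g\|_\infty^2(\nu_{r;N}+\err_{\operator;r}+\delta)\bigr).
\]
The Taylor bound $\sqrt{1+\varepsilon}-1 \le \varepsilon/2$ for small $\varepsilon$ then gives $\alpha-1 = \mathcal{O}(r^4\|g\|_\infty^2(\nu_{r;N}+\err_{\operator;r}+\delta))$ whenever $\delta,\err_{\operator;r}$ are small and $r$ large, while Lemma \ref{lem:zeroth.error} supplies $\|g-g^{\ast}\|_\infty = \mathcal{O}(r\err_{\operator;r}\|g\|_\infty)$ and hence $\|g^{\ast}\|_\infty = \mathcal{O}(\|g\|_\infty)$. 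Inserting these into the triangle-inequality decomposition
\[
\ell(x_a,x_b) \;\le\; 2\|g-g^{\ast}\|_\infty + |g^{\ast}(x_a)-g^{\ast}(x_b)|(1-\alpha^{-1}) \;\le\; 2\|g-g^{\ast}\|_\infty + 2\|g^{\ast}\|_\infty(\alpha-1)
\]
and collecting $r$-powers via Weyl's law — using in particular the identity $(9d+5)/(2d)+(d-5)/(2d)=5$, so that the product $r^4\cdot\|g^{\ast}\|_\infty$ absorbs into $r^{(9d+5)/(2d)}\cdot r^{(d-5)/(2d)}$ against the $\nu_{r;N}$ term — produces the stated bound $\LowerBoundError{g}$.

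The main obstacle is this last piece of bookkeeping: distributing the $\|g^{\ast}\|_\infty=\mathcal{O}(\|g\|_\infty)$ from the rescaling against the two halves of the ApproximationError from Lemma \ref{lem:approximation.error} without producing a rogue $\|g\|_\infty^3$ factor. The peculiar shape $\delta(1+\|g\|_\infty^{-1})$ in the target records that the gradient-squared discretization of Lemma \ref{lem:gradient.discretization.error} contributes one piece linear and one piece quadratic in $\|g\|_\infty$; respecting this split, rather than bounding the ApproximationError monolithically, is what produces the quoted exponent structure. I would also expect to treat the case $\alpha=1$ separately from $\alpha>1$, since when $\|\nabla_M g^{\ast}\|_\infty\le 1$ the rescaling contribution vanishes outright and only the cleaner Fourier-coefficient error from Lemma \ref{lem:zeroth.error} remains.
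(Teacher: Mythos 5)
Your proposal is correct and follows essentially the same route as the paper: the same construction $f_g=\mu g^*$ with $\mu=\min(1,\|\nabla_M g^*\|_\infty^{-1})$, the same split of the loss into a substitution error controlled by Lemma \ref{lem:zeroth.error} and a rescaling error controlled by Lemma \ref{lem:approximation.error}. The only (harmless) difference is that you bound the normalization defect via $\sqrt{1+\varepsilon}-1\le\varepsilon/2$ after comparing $\|\nabla_M g^*\|_\infty^2$ to the constrained discrete quantity, whereas the paper bounds the reciprocal difference $(\ddot\nabla_{\operator;r}g)^{-1}-\|\nabla_M g^*\|_\infty^{-1}$ as $E/F$ with $F$ bounded away from zero; your explicit treatment of the case $\alpha=1$ in fact covers a corner the paper glosses over, and the final exponent bookkeeping you flag is left equally loose in the paper's own proof.
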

\begin{proof}
  Define smooth maps $f_g$ and non-negative real number $\mu$ by
  \begin{align*}
	\mu&=\min\left(1,\left(\|\nabla_Mg^*\|^2_\infty\right)^{-\half}\right)\\
	f_g&=\mu g^*
  \end{align*}

  We first note that the smooth map $f_{g}$ satisfies
  \begin{align*}
    \|\nabla_Mf_g\|^2_\infty
	&=\mu^2\|\nabla_Mg^*\|^2_\infty\leqslant \|\nabla_Mg^*\|_\infty^{-2}\|\nabla_Mg^*\|^2_\infty=1
  \end{align*}

  Let $\ddot\nabla_{\operator;r}g=\max_{1\leq i\leq r}(\dirac{\operator}{r}^2g)(x_i)$ and
  \begin{align*}
    E(x_1,\ldots,x_n)&=|\ddot\nabla_{\operator;r}g^2-\|\nabla_Mg^*\|^2|\\
	F(x_1,\ldots,x_n)&=\ddot\nabla_{\operator;r}g\|\nabla_Mg^*\|_\infty(\ddot\nabla_{\operator;r}g+\|\nabla_Mg^*\|_\infty).
  \end{align*}
  
  Lemma \ref{lem:zeroth.error} implies that 
  \begin{align*}
    |g(x_a,x_b)-f_g(x_a,x_b)|
	&= |g(x_a,x_b)-\mu g^*(x_a,x_b)|\\
	&\leq |{g(x_a,x_b)}-g^*(x_a,x_b)\|\nabla_Mg^*\|_\infty^{-1}|\\
	&\leq |(g(x_a,x_b)-g^*(x_a,x_b))\|\nabla_Mg^*\|_\infty^{-1}\\
	&+(1-\|\nabla_Mg^*\|_\infty^{-1})g(x_a,x_b)|\\
	&\leq \mathcal{O}\left(q\err_{\operator;q}\|g\|_\infty+\|g\|_\infty((\ddot\nabla_{\operator;r}g)^{-1}-\|\nabla_Mg^*\|_\infty^{-1})\right)\\
	&= \mathcal{O}\left(q\err_{\operator;q}\|g\|_\infty+\|g\|_\infty( E(x_1,\ldots,x_n)F(x_1,\ldots,x_n)^{-1}\right)
  \end{align*}
  for all $1\leqslant a\leqslant b\leqslant 1$. 
  The term $\|\nabla_Mg^*\|_\infty$ can be made arbitrarily close to $\kappa=\dirac{\operator}{r}g<1$ and therefore $F(x_1,\ldots,x_n)$ can be made arbitrarily close to $2\kappa^3$ for $\err_{\operator;r}$ sufficiently small, $r$ sufficiently large, and the covering radius of $x_1,\ldots,x_n$ sufficiently small by an application of Lemma \ref{lem:approximation.error}.  
  Therefore the result follows from an application of Lemma \ref{lem:approximation.error} again.
\end{proof}

\begin{lem}
  \label{lem:upper.bound}
  Consider the following data.
  \begin{enumerate}
	  \item $\delta$-net $x_1,\ldots,x_n$ of $M$
	  \item $f\in E_{1;\Delta_M}+\cdots+E_{q;\Delta_M}$ such that $\|\nabla_Mf\|_\infty=1$
	  \item $1\leqslant a<b\leqslant n$
  \end{enumerate}
  There exists $g_f\in E_{1;\operator}+\cdots+E_{q;\operator}$ such that $\|\dirac{\operator}{r}g_f\|_\infty\leqslant 1$ and for $\delta,\err_{\operator;r}$ sufficiently small and $r$ sufficiently large (and in particular $2|\lambda_q|+\half<|\lambda_r|$),  the value of $\ell(x_a,x_b)=|f(x_a,x_b)-g_f(x_a,x_b)|$ satisfies
  $$\ell(x_a,x_b)=\mathcal{O}(\LowerBoundError{f}).$$
\end{lem}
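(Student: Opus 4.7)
The plan is to mirror Lemma \ref{lem:lower.bound} with the roles of $\Delta_M$ and $\operator$ interchanged. Given $f\in E_{1;\Delta_M}+\cdots+E_{q;\Delta_M}$, form its discrete analogue by transplanting its Fourier coefficients onto the perturbed eigenvectors: set
\[\tilde f=\sum_{i=1}^q\hat f_i\,e_{i;\operator},\qquad \mu=\min\!\left(1,\bigl(\ddot\nabla_{\operator;r}\tilde f\bigr)^{-1}\right),\qquad g_f=\mu\,\tilde f,\]
where $\ddot\nabla_{\operator;r}h=\max_{1\leq i\leq n}(\dirac{\operator}{r}h)(x_i)$ as in the preceding lemma. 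By Lemma \ref{lem:nonlinear.gradient}, $\dirac{\operator}{r}g_f=\mu\,\dirac{\operator}{r}\tilde f$, so $\|\dirac{\operator}{r}g_f\|_\infty\leq 1$ by construction, giving the required constraint. The key identity that makes the construction symmetric to the lower bound is $\tilde f{}^{*}=f$: indeed, the $\operator$-Fourier coefficients of $\tilde f$ are exactly $\hat f_i$, and reassembling against the $\Delta_M$-eigenbasis recovers $f$ verbatim.

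Next I would bound the two sources of error separately. For the discretization/eigenvector perturbation, Lemma \ref{lem:coefficient.bound} combined with $\|e_i-e_{i;\operator}\|_\infty\leq\err_{\operator;r}$ gives
\[\|f-\tilde f\|_\infty=\Bigl\|\sum_{i=1}^q\hat f_i(e_i-e_{i;\operator})\Bigr\|_\infty=\mathcal O\!\left(q\err_{\operator;r}\|f\|_\infty\right),\]
which is the counterpart of Lemma \ref{lem:zeroth.error} needed in the reverse direction. For the rescaling, applying Lemma \ref{lem:approximation.error} to $\tilde f$ (using $\tilde f{}^{*}=f$ and the hypothesis $\|\nabla_M f\|_\infty=1$) yields
\[\bigl|\ddot\nabla_{\operator;r}^{2}\tilde f-1\bigr|=\mathcal O\!\left(\ApproximationError{}\right),\]
so for $\delta,\err_{\operator;r}$ small enough and $r$ large enough, $\ddot\nabla_{\operator;r}\tilde f$ is bounded away from $0$ and $|1-\mu|$ is controlled by the same rate (divided by the stable denominator, exactly as in the final paragraph of the proof of Lemma \ref{lem:lower.bound}).

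Finally I would assemble using the triangle inequality
\[|f(x_a,x_b)-g_f(x_a,x_b)|\leq 2\|f-\tilde f\|_\infty+|1-\mu|\,|\tilde f(x_a,x_b)|,\]
bounding $|\tilde f(x_a,x_b)|\leq 2\|f\|_\infty+2\|f-\tilde f\|_\infty$, invoking Weyl's Law $q=\mathcal O(r)$ to absorb the prefactor $q$ into $r$, and collecting the resulting terms. The dominant contribution comes from the $|1-\mu|$ factor via Lemma \ref{lem:approximation.error}, which supplies exactly the rate appearing in $\LowerBoundError{f}$. The main obstacle, as in the lower bound, is controlling the denominator $\ddot\nabla_{\operator;r}\tilde f$: one must use Lemma \ref{lem:approximation.error} to argue that this quantity is $1+o(1)$, so that truncating $\mu$ at $1$ does not destroy the rate and so that division by it is legitimate. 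Once that is done, the bound is symmetric to Lemma \ref{lem:lower.bound} and no new estimates are required.
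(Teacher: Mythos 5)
Your construction is essentially the paper's own proof: the paper likewise sets $f_*=\sum_i\hat f_i e_{i;\operator}$ (your $\tilde f$, with $(f_*)^*=f$), rescales by $\mu=\min\left(1,\left(\|\dirac{\operator}{r}^2 f_*\|_\infty\right)^{-\half}\right)$, bounds the zeroth-order error via Lemma \ref{lem:zeroth.error}, and controls $|1-\mu|$ through Lemma \ref{lem:approximation.error} with the same stable-denominator argument, so no new ideas are involved. The one discrepancy is that you normalize by the sample-point maximum $\ddot\nabla_{\operator;r}\tilde f$ rather than the supremum over $M$, so the constraint $\|\dirac{\operator}{r}g_f\|_\infty\leqslant 1$ is not literally ``by construction'' (it can be exceeded by a discretization term of order $\delta$); defining $\mu$ with the full sup norm, as the paper does, repairs this without altering the rest of your argument.
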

\begin{proof}
  Let $f_*=\sum_{i}\hat{f}_{i}e_{i;\operator}$, so that $f=(f_*)^*$. 
  Define
  \begin{align*}
	  \mu&=\min\left(1,\left(\|\dirac{\operator}{r}^2f_*\|_\infty\right)^{-\half}\right)\\
	  g_f&=\mu f_*
  \end{align*}

  Let $\ddot\nabla_{\operator;r}g_f=\max_{1\leq i\leq r}(\dirac{\operator}{r}^2g_f)(x_i)$ and
  \begin{align*}
    E(x_1,\ldots,x_n)&=|(\ddot\nabla_{\operator;r}f_*)^2-\|\nabla_Mf\|^2|\\
	F(x_1,\ldots,x_n)&=\ddot\nabla_{\operator;r}f_*\|\nabla_Mf\|_\infty(\ddot\nabla_{\operator;r}f_*+\|\nabla_Mf\|_\infty).
  \end{align*}

  We first note that the function $g_f$ satisfies
  \begin{align*}
	  \max_i|(\dirac{\operator}{r}^2g_f)(x_i)|
	  &=\mu^2\max_i|(\dirac{\operator}{r}^2f)(x_i)|\\
	  &\leqslant \left(\max_i|(\dirac{\operator}{r}^2f)(x_i)|\right)^{-1}\max_i|(\dirac{\operator}{r}^2)f(x_i)|\\
	&=1
  \end{align*}
  
  Lemma \ref{lem:zeroth.error} implies that 
  \begin{align*}
    |f(x_a,x_b)-g_f(x_a,x_b)|
	&= |f(x_a,x_b)-\mu f_*(x_a,x_b)|\\
	&\leq |{f(x_a,x_b)}-f_*(x_a,x_b)\|\nabla_{\operator;r}f_*\|_\infty^{-1}|\\
	&\leq |(f(x_a,x_b)-f_*(x_a,x_b))\|\nabla_{\operator;r}f_*\|_\infty^{-1}+(1-\|\nabla_{\operator;r}f_*\|_\infty^{-1})g(x_a,x_b)|\\
	&\leq \mathcal{O}\left(q\err_{\operator;q}\|f\|_\infty+\|f\|_\infty((\ddot\nabla_{\operator;r}f)^{-1}-\|\nabla_Mf_*\|_\infty^{-1})\right)\\
	&= \mathcal{O}\left(q\err_{\operator;q}\|f\|_\infty+\|f\|_\infty( E(x_1,\ldots,x_n)F(x_1,\ldots,x_n)^{-1}\right)
  \end{align*}
  for all $1\leqslant a\leqslant b\leqslant 1$. 
  The term $\ddot\nabla_{\operator;r}f_*$ can be made arbitrarily close to $1=\|\nabla_Mf\|_\infty$ and therefore $F(x_1,\ldots,x_n)$ can be made arbitrarily close to $1(1)(1+1)=2$ for $\err_{\operator;r}$ sufficiently small, $r$ sufficiently large, and the covering radius of $x_1,\ldots,x_n$ sufficiently small by an application of Lemma \ref{lem:approximation.error}.  
  Therefore the result follows from an application of Lemma \ref{lem:approximation.error} again.   
\end{proof}

\begin{proof}[proof of Theorem \ref{thm:convergence}]
  Let $E=\dist_{M;q}(X_a,X_b)-\hat{d}_{\graphlaplacian;q;r}(X_a,X_b)$.  
  In solving the optimization problem defining $\dist_{M;q}(X_a,X_b)$, it suffices to add the constraint that $\|f\|_\infty\leq\diam_M$ because $\dist_{M;q}(X_a,X_b)$ is arbitrarily approximated by values of the form $|f(0)-f(X_b)|=|f(X_b)|$ for $f$ the $q$th partial Fourier series of a smooth Lipschitz approximation of the continuous distance function $\dist_M(X_a,-):M\ra\R$ for sufficiently large $q$.   
  In solving the optimization problem defining $\hat{d}_{\graphlaplacian;q;r}(X_a,X_b)$, it similarly suffices to add the constraint that the unique smooth interpolation of $\phi$ to $g_\phi\in C^\infty(M)$ satisfies $\|g_\phi\|_\infty\leq\diam_M$ and additionally that $\nabla_{\graphlaplacian;r}(X_i)\leq 1\quad 1\leq i\leq n$ by Lemma \ref{lem:nonlinear.gradient}, for sufficiently large $q$.  
  Lemmas \ref{lem:lower.bound} and \ref{lem:upper.bound} therefore implies that $\pm E=\mathcal{O}(\MISE{})$ and hence the theorem follows. 
\end{proof}

\begin{proof}[proof of Corollary \ref{cor:stochastic}]
  Let $s_n$ be the minimal natural number $s$ such that $2|\lambda_{q}+\half<|\lambda_s|$.  
  Weyl's Law and Lemma \ref{lem:k.criterion} implies $s_n=o(1)$ almost surely.
  Therefore the supremum $S=\sup_ns_n<\infty$.  
  There exists a sequence $r_n\ra\infty$ bounded below by $S$ such that $\err_{\graphlaplacian_n;r_n}\ra 0$ - because otherwise for each $n\gg 0$ there would exist some natural number $R_n$ such that $\err_{\graphlaplacian_{n+i};R_n+1}\geq\err_{\graphlaplacian_n;R_n}$ for all $i$, contradicting  $\err_{\graphlaplacian_{n+i};R_n+1}\ra 0$ as $i\ra\infty$.
  The previous theorem implies that
  $$|\estimator{q;r_n;\hat\graphlaplacian_n}(X_a,X_b)-\dist_{M;q}(X_a,X_b)|=\mathcal{O}\left(\IntroMISE{}\right).$$
  where $\delta_n$ is the covering radius of the samples $X_1,X_2,\ldots,X_n$.  
  Equidistribution together with the positivity of the density implies that $\delta_n\rightarrow 0$ almost surely.  
  Therefore the right hand side goes to $0$.
\end{proof}

\section{Acknowledgements}
The author would like to thank Henri Moscovici for helpful discussions on the Connes' Distance Formula, Andrey Gogolyev for helpful pointers on Riemannian geometry, and Cosma Shalizi for helpful discussions on Wasserstein distance.  

\appendix
\section{Harmonics}\label{sec:fourier}
We recall some basic facts about the $L_2$-orthonormal sequence
$$e_1,e_2,e_3,\ldots$$

Even though $\langle e_2,e_2\rangle=1$ by definition, $e_i$ is not necessarily bounded by $1$.  
The following result from \cite{uniform.harmonic.bound} gives a uniform upper bound on $e_i$.  

\begin{thm}
  \label{thm:uniform.bound}
  There exists a constant $C_{2}>0$ such that for each $i=1,2,\ldots$,
  $$\|e_{i}\|_\infty\leqslant C_{2}\lambda_i^{\nicefrac{(d-1)}{4}}.$$
\end{thm}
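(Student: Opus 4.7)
The plan is to follow the classical Hörmander route, since the stated inequality is exactly the sharp sup-norm bound for Laplace--Beltrami eigenfunctions on a compact Riemannian manifold. The argument has two stages: first reduce the pointwise control of a single eigenfunction to an on-diagonal bound on the spectral projector onto a unit-length spectral window; then prove that window bound via a short-time parametrix for the half-wave operator $\cos(t\sqrt{-\Delta_M})$.

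For the reduction, I would observe that by $L_2$-orthonormality of the $e_j$'s, for every $x\in M$ and every $\mu>0$,
$$|e_i(x)|^2 \;\leq\; \sum_{j\,:\,|\lambda_j|\in[\mu-1,\mu+1]}|e_j(x)|^2 \;=\; E_{[\mu-1,\mu+1]}(x,x),$$
where $E_I(x,y)$ denotes the Schwartz kernel of the spectral projector of $-\Delta_M$ onto $I$, and we specialize to $\mu=|\lambda_i|$. Thus it suffices to prove the uniform on-diagonal bound $E_{[\mu-1,\mu+1]}(x,x)=O(\mu^{(d-1)/2})$ as $\mu\to\infty$, and the theorem then follows by taking square roots.

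To prove this window bound I would pick an even, non-negative $\chi\in C^\infty_c(\R)$ with $\hat\chi\geq 0$, $\hat\chi(0)>0$ and $\hat\chi$ supported in a small interval $(-T,T)$ where $\cos(t\sqrt{-\Delta_M})$ admits a short-time Fourier integral operator parametrix. Fourier inversion yields
$$\sum_j \chi\bigl(\mu-\sqrt{|\lambda_j|}\bigr)\,|e_j(x)|^2 \;=\; \tfrac{1}{\pi}\int \hat\chi(t)\,e^{it\mu}\,K(t;x,x)\,dt,$$
where $K(t;x,y)$ is the kernel of $\cos(t\sqrt{-\Delta_M})$. Stationary phase applied to the FIO parametrix gives that the right-hand side is $O(\mu^{d-1})$, uniformly in $x$. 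Using the positivity of all summands and covering $[\mu-1,\mu+1]$ by finitely many translates of the window where $\chi$ is bounded below then converts the smoothed bound into the sharp bound $E_{[\mu-1,\mu+1]}(x,x)=O(\mu^{(d-1)/2})$.

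The main technical obstacle is the construction and symbol calculus of the short-time parametrix for $\cos(t\sqrt{-\Delta_M})$ on a general compact Riemannian manifold: its principal symbol is precisely what produces the exponent $(d-1)/4$, and verifying the required $O(\mu^{d-1})$ oscillatory-integral bound requires some microlocal analysis. This content is classical (due to Hörmander) and is fully developed in the cited reference \cite{uniform.harmonic.bound}, so I would simply invoke that result rather than reproduce the parametrix construction in the appendix.
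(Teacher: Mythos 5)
The paper gives no proof of this statement---it is imported wholesale from \cite{uniform.harmonic.bound} (the classical H\"ormander sup-norm bound for Laplace eigenfunctions)---and your plan ultimately defers to the same reference, so you are taking essentially the same route as the paper, with your sketch being the standard proof of the cited result. One small caution if you ever write it out: you use $\mu$ both as the eigenvalue (in the projector window $[\mu-1,\mu+1]$) and as the frequency (in the smoothing $\chi(\mu-\sqrt{|\lambda_j|})$ and the bound $O(\mu^{d-1})$), so the intermediate exponents refer to different normalizations even though the final exponent $\lambda_i^{\nicefrac{(d-1)}{4}}$ comes out right.
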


The following result from \cite{uniform.harmonic.gradient.bound} gives a uniform upper bound on $\|\nabla e_i\|$ in terms of $\|e_i\|_\infty$

\begin{thm}
  \label{thm:gradient.bound}
  There exists a constant $C_{1}>0$ such that for each $i=1,2,\ldots$,
  $$\|\nabla_M\,e_{i}\|_\infty\leqslant C_{1}\lambda_i^{\half}\|e_{i}\|_{\infty}$$
\end{thm}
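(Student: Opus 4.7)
The plan is to prove the gradient bound $\|\nabla_M e_i\|_\infty \leq C_1|\lambda_i|^{1/2}\|e_i\|_\infty$ by a standard parabolic/elliptic rescaling argument: rescale the eigenfunction equation so that the eigenvalue becomes unit-sized on a fixed domain, then apply interior Schauder estimates for second-order elliptic PDE. Throughout, I treat $\lambda_i$ as shorthand for $|\lambda_i|$, which is consistent with how the statement is used downstream (e.g.\ in Corollary~\ref{cor:gradient.bound}, combined with Theorem~\ref{thm:uniform.bound}).

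First, I would set up uniform coordinates. Because $M$ is compact and smooth, it has a positive injectivity radius $\iota > 0$, and in normal coordinates centered at an arbitrary $p \in M$ the metric components $g_{ij}(x)$, their derivatives of any order, and their inverse $g^{ij}(x)$ are all bounded on $B_\iota(0) \subset \mathbb{R}^d$ uniformly in $p$. In these coordinates the eigenfunction equation becomes $g^{ij}(x)\partial_i\partial_j e_i + b^i(x)\partial_i e_i = \lambda_i e_i$ for smooth, uniformly elliptic coefficients whose $C^k$-norms do not depend on $p$.

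Second, I would rescale. For $p \in M$ and $i$ with $\lambda := |\lambda_i| \geq 1$, define $u(y) = e_i(\exp_p(\lambda^{-1/2}y))$ on the Euclidean ball $B_1(0)$. A direct chain-rule calculation shows that $u$ satisfies $\tilde L u = -u$ on $B_1(0)$, where $\tilde L$ is the uniformly elliptic operator with coefficients $g^{ij}(\lambda^{-1/2}y)$ and lower-order coefficients $\lambda^{-1/2}b^i(\lambda^{-1/2}y)$. Crucially, since $\lambda \geq 1$, these rescaled coefficients retain uniform $C^k$-bounds on $B_1(0)$ independent of both $p$ and $\lambda$. I would then apply the standard interior $C^{1,\alpha}$ Schauder estimate for such an elliptic equation: there exists a universal constant $C$ with $\|\nabla u\|_{L^\infty(B_{1/2})} \leq C\|u\|_{L^\infty(B_1)}$. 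Unscaling using $\nabla u(0) = \lambda^{-1/2}\nabla_M e_i(p)$ and $\|u\|_{L^\infty(B_1)} \leq \|e_i\|_\infty$ gives $|\nabla_M e_i(p)| \leq C\lambda^{1/2}\|e_i\|_\infty$, and taking the supremum over $p$ yields the desired inequality for all $i$ with $|\lambda_i| \geq 1$. The finitely many $i$ with $|\lambda_i| < 1$ (finitely many by Weyl's law) are absorbed into the constant.

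The main obstacle is showing that the Schauder constant $C$ can genuinely be taken uniform in $p$ and in $\lambda \to \infty$. Uniformity in $p$ follows from compactness of $M$ (equivalently, compactness of the orthonormal frame bundle), while uniformity in $\lambda$ follows from the observation that rescaling the domain by $\lambda^{-1/2} \leq 1$ only \emph{improves} the regularity of the metric coefficients on the fixed target ball $B_1(0)$, so their $C^k$-norms decrease (or remain bounded) as $\lambda$ grows. Once this uniformity is in hand, the rest is routine elliptic theory, which is why the paper quotes this result rather than proving it internally.
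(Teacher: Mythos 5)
Your argument is correct in substance, but it is worth noting that the paper does not prove Theorem \ref{thm:gradient.bound} at all: it is imported as a black box from the cited reference on uniform gradient bounds for eigenfunctions, so there is no internal proof to match. What you have written is essentially the standard proof that underlies such results: pass to normal coordinates (uniform in the base point by compactness), rescale by $|\lambda_i|^{-1/2}$ so the eigenvalue equation becomes a uniformly elliptic equation with unit-size zeroth-order term and coefficients whose H\"older norms only improve under the contraction, apply interior Schauder, and unscale. The citation buys the paper brevity (and, in the literature, versions with explicit constants or weaker regularity hypotheses); your route buys self-containedness at the cost of a page of routine elliptic theory. Two small points to tighten if you were to write it out: (i) the estimate $\|\nabla u\|_{L^\infty(B_{1/2})}\leq C\|u\|_{L^\infty(B_1)}$ should be obtained by treating $\tilde{L}u+u=0$ as a \emph{homogeneous} equation with bounded zeroth-order coefficient (Gilbarg--Trudinger interior Schauder), since quoting the inhomogeneous estimate with $f=-u$ would put $\|u\|_{C^\alpha}$, not $\|u\|_{L^\infty}$, on the right-hand side; (ii) the rescaled chart $y\mapsto\exp_p(|\lambda_i|^{-1/2}y)$ covers $B_1(0)$ only once $|\lambda_i|\geq\iota^{-2}$, so either work on $B_{\min(1,\iota)}(0)$ or relegate the finitely many eigenvalues below that threshold to the same constant-absorption step you already use for $|\lambda_i|<1$ (legitimate here because the paper indexes only nonzero eigenvalues, so each such ratio is finite). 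With those adjustments your proposal is a complete and correct proof of the quoted bound.
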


Combining the two gives us the following uniform upper bound on  $\|\nabla e_i\|$ in terms of $\lambda_i$.  

\begin{cor}
  \label{cor:gradient.bound}
  There exists a constant $C_3>0$ such that for  each $i=1,2,\ldots$,
  $$\|\nabla_M\,e_{i}\|_\infty\leqslant C_{3}\lambda_i^{\nicefrac{(d+1)}{4}}.$$
\end{cor}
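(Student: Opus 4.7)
The plan is to obtain the corollary by directly chaining the two preceding theorems in the appendix: Theorem \ref{thm:gradient.bound} bounds $\|\nabla_M e_i\|_\infty$ in terms of $\lambda_i^{1/2}$ and $\|e_i\|_\infty$, while Theorem \ref{thm:uniform.bound} in turn controls $\|e_i\|_\infty$ by a power of $\lambda_i$. So I would simply substitute the second bound into the first and collect exponents.

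More concretely: applying Theorem \ref{thm:gradient.bound} first, I get a constant $C_1 > 0$ with $\|\nabla_M e_i\|_\infty \leq C_1 \lambda_i^{\nicefrac{1}{2}} \|e_i\|_\infty$. Applying Theorem \ref{thm:uniform.bound} next, I get a constant $C_2 > 0$ with $\|e_i\|_\infty \leq C_2 \lambda_i^{\nicefrac{(d-1)}{4}}$. Combining the two inequalities yields
$$\|\nabla_M e_i\|_\infty \leq C_1 C_2 \, \lambda_i^{\nicefrac{1}{2} + \nicefrac{(d-1)}{4}} = C_1 C_2 \, \lambda_i^{\nicefrac{(d+1)}{4}},$$
so setting $C_3 = C_1 C_2$ finishes the proof.

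There is no real obstacle here; the only thing to check is that the arithmetic of the exponents gives $(d+1)/4$, which it does since $\nicefrac{1}{2} + \nicefrac{(d-1)}{4} = \nicefrac{2}{4} + \nicefrac{(d-1)}{4} = \nicefrac{(d+1)}{4}$. The corollary is essentially a repackaging of the two cited bounds into a single, $\lambda_i$-only upper bound convenient for the applications in Lemmas \ref{lem:first.derivative.bound} and \ref{lem:third.derivative.bound} above.
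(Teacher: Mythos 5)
Your proof is correct and is exactly the paper's intended argument: the corollary is stated as the immediate combination of Theorem \ref{thm:gradient.bound} and Theorem \ref{thm:uniform.bound}, with the exponent arithmetic $\nicefrac{1}{2}+\nicefrac{(d-1)}{4}=\nicefrac{(d+1)}{4}$ as you computed.
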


Recall $c_{ijk}$ denotes the \textit{triple product} $\langle e_{i}e_j,e_k\rangle$.
The following decay rate from \cite{triple.product.decay} ensures that the Fourier coefficents $c_{ijk}$ of $e_ie_j$ rapidly decay after $|\lambda_k|>|\lambda_i|+|\lambda_j|$.    

\begin{thm}
  \label{thm:triple.products}
  For all $\epsilon>0$ and $N>0$, there exists $C_{\epsilon,N}>0$ such that
  $$\sum_{|\lambda_i+\lambda_j|(1+\epsilon)<|\lambda_k|}\!\!\!\!\!\!\!\!\!\!\!\!\!|c_{ijk}|\leqslant C_{\epsilon,N}|\lambda_i+\lambda_j|^{-N}$$
  for all $i,j$ such that $\lambda_i,\lambda_j\leqslant\lambda$.
\end{thm}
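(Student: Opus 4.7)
Write $\mu = \lambda_i + \lambda_j$. The strategy is to mimic the classical proof of rapid decay of Fourier coefficients of smooth functions on the torus: repeatedly integrate by parts against the Laplace--Beltrami operator, exploit the eigenvalue equation $\Delta_M e_k = \lambda_k e_k$ to pull out factors of $\lambda_k$, and commute $\Delta_M$ through the product $e_i e_j$ using the Leibniz rule. The key algebraic observation is that, setting $P = \Delta_M - \mu I$,
$$P(e_i e_j) = \Delta_M(e_ie_j) - \mu e_ie_j = 2\,\nabla_M e_i \cdot \nabla_M e_j,$$
so that the ``leading'' $\mu$-contribution of $\Delta_M$ on the product cancels out and only gradient-type corrections survive.

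First, using self-adjointness of $\Delta_M$ on $L^2(M)$ and iterating, for each integer $N \geq 0$ one has
$$(\lambda_k - \mu)^N\, c_{ijk} \;=\; \langle P^N(e_i e_j),\, e_k\rangle.$$
Next, since $P^N(e_i e_j)$ unfolds, by repeated application of the Leibniz-type formula $\Delta_M(fg) = f\Delta_M g + g\Delta_M f + 2\nabla_M f\cdot\nabla_M g$ and the identity $P(e_i e_j) = 2\nabla_M e_i\cdot\nabla_M e_j$, into a finite sum of contractions of iterated covariant derivatives of $e_i$ and $e_j$, one can combine Theorem~\ref{thm:uniform.bound}, Corollary~\ref{cor:gradient.bound}, and higher-order analogues for $\nabla_M^a e_i$ (each derivative costs roughly one factor of $\lambda_i^{1/2}$ in $L^\infty$) to obtain a uniform bound
$$\|P^N(e_i e_j)\|_2 \;\leq\; C'_{N}\,\lambda^{\alpha(N,d)}$$
for all $i,j$ with $|\lambda_i|,|\lambda_j| \leq \lambda$, where $\alpha(N,d)$ depends only on $N$ and $d$.

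Finally, the spectral condition $|\mu|(1+\epsilon) < |\lambda_k|$ implies $|\lambda_k - \mu| \geq \epsilon|\mu|$, so Bessel's inequality applied to the expansion of $P^N(e_i e_j)$ in the orthonormal basis $\{e_k\}$ yields
$$\sum_{|\mu|(1+\epsilon)<|\lambda_k|} |c_{ijk}|^2 \;\leq\; \frac{\|P^N(e_i e_j)\|_2^2}{(\epsilon|\mu|)^{2N}}.$$
To upgrade this $\ell^2$-bound to the desired $\ell^1$-bound, I apply Cauchy--Schwarz and use Weyl's Law to control the counting function $\#\{k : |\lambda_k| \leq T\} = O(T^{d/2})$, truncating the sum at a polynomial cutoff in $|\mu|$ and treating the tail by taking $N$ one step larger. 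Choosing $N$ large enough in terms of the desired exponent $N'$ and $d$ yields the claimed bound $C_{\epsilon,N'}|\mu|^{-N'}$.

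The main obstacle is the second step: producing a uniform bound on $\|P^N(e_i e_j)\|_2$ with explicit polynomial dependence on $\lambda$ requires (i) combinatorial bookkeeping of the many terms generated by iterating the Leibniz rule for $\Delta_M$ on a product, and (ii) sharp $L^\infty$ estimates on higher covariant derivatives of eigenfunctions, which go beyond the first-derivative bound of Corollary~\ref{cor:gradient.bound}. Both ingredients are microlocal in nature and constitute the technical core of the argument in \cite{triple.product.decay}, which the present theorem quotes.
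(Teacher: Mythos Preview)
The paper does not prove Theorem~\ref{thm:triple.products} at all: it is stated in the appendix as a quoted result from \cite{triple.product.decay}, with no argument given. So there is no ``paper's own proof'' to compare your proposal against, and you yourself acknowledge this in your final paragraph.

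As a sketch of the argument in the cited reference, your outline is broadly on the right track---the mechanism is indeed repeated integration by parts against $\Delta_M-\mu I$ to manufacture powers of $\lambda_k-\mu$, followed by estimates on the resulting differential expressions in $e_i,e_j$. One point to watch: your intermediate bound $\|P^N(e_ie_j)\|_2\le C'_N\lambda^{\alpha(N,d)}$ carries a polynomial factor in the eigenvalue scale $\lambda$, not in $\mu$, so to reach a clean $|\mu|^{-N'}$ you must be explicit that the extra $\lambda^{\alpha}$ is absorbed by taking the internal $N$ strictly larger than $N'$ and using $|\mu|\lesssim\lambda$; otherwise the constant $C_{\epsilon,N'}$ would silently depend on $\lambda$. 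You flag the genuine difficulty correctly: the higher-order $L^\infty$ (or $L^2$) bounds on $\nabla_M^a e_i$ are not available from the first-order estimates recorded in this paper, and supplying them is exactly where the microlocal work of \cite{triple.product.decay} enters.
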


\begin{thm:weyl.law}
  There exists $C_3>0$ such that 
  $$k=C_3\lambda_k^{d/2}+o\left(\lambda_k^{\nicefrac{(d-1)}{2}}\right).$$
\end{thm:weyl.law}

We can now bound the error in truncating the Laplacian of a product.

\begin{lem}
  \label{lem:eigenproduct.error}
  For each $N\gg 0$ and all $i,j,k$ such that $0<\lambda_i+\lambda_j<\lambda_k$, 
  $$\|\Delta_M(e_ie_j)-(\Delta_M(e_ie_j))_{(k)}\|_\infty=\mathcal{O}\left(\sum_{\ell=k}^{\infty}\ell^{-N}\right).$$
\end{lem}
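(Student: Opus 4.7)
The plan is to Fourier-expand $\Delta_M(e_ie_j)$ in the orthonormal eigenbasis $e_1,e_2,\dots$ and then bound the tail of the expansion in $L_\infty$ by combining Theorem \ref{thm:uniform.bound}, Theorem \ref{thm:triple.products}, and Weyl's Law.

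First I would observe that since $\Delta_M$ is self-adjoint,
$$\langle \Delta_M(e_ie_j),e_\ell\rangle = \langle e_ie_j,\Delta_Me_\ell\rangle = \lambda_\ell c_{ij\ell},$$
so the truncation error admits the Fourier representation
$$\Delta_M(e_ie_j)-(\Delta_M(e_ie_j))_{(k)}=\sum_{\ell>k}\lambda_\ell c_{ij\ell}e_\ell.$$
Applying the triangle inequality and then Theorem \ref{thm:uniform.bound} to $\|e_\ell\|_\infty$ would give
$$\bigl\|\Delta_M(e_ie_j)-(\Delta_M(e_ie_j))_{(k)}\bigr\|_\infty\leq C_2\sum_{\ell>k}|\lambda_\ell|^{\nicefrac{(d+3)}{4}}|c_{ij\ell}|.$$

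Next I would use the hypothesis $\lambda_i+\lambda_j<\lambda_k$ to pick $\epsilon>0$ with $(1+\epsilon)(\lambda_i+\lambda_j)<\lambda_k\leq\lambda_\ell$ for every $\ell\geq k$, so that Theorem \ref{thm:triple.products} applies to all the triple products appearing in the tail. The theorem, applied with a sufficiently large exponent $N'$ in place of $N$, forces $|c_{ij\ell}|$ to decay faster than any inverse power of $\lambda_\ell$ in the range $\ell\geq k$. Combining this polynomial decay with the polynomial growth factor $|\lambda_\ell|^{\nicefrac{(d+3)}{4}}$ and converting $\lambda_\ell$ to $\ell$ via Weyl's Law ($\lambda_\ell\asymp\ell^{\nicefrac{2}{d}}$) yields a termwise bound of the form $|\lambda_\ell||c_{ij\ell}|\|e_\ell\|_\infty=\mathcal{O}(\ell^{-N})$ once $N'$ is taken large enough (depending on $N$ and $d$). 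Summing over $\ell\geq k$ then delivers the claimed $\mathcal{O}(\sum_{\ell=k}^\infty\ell^{-N})$ bound.

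The step I expect to be the main obstacle is the passage from the \emph{summed} decay estimate of Theorem \ref{thm:triple.products} to an effective \emph{termwise} bound on $|c_{ij\ell}|$ that can absorb the polynomial factor $|\lambda_\ell|^{\nicefrac{(d+3)}{4}}$. The cleanest route is to note that since $e_ie_j\in C^\infty(M)$, iterated application of $\Delta_M$ combined with Parseval gives $|c_{ij\ell}|=\mathcal{O}(\lambda_\ell^{-M})$ for every $M$, and Theorem \ref{thm:triple.products} encodes essentially the same rapid decay. Either route produces enough decay to dominate $|\lambda_\ell|^{\nicefrac{(d+3)}{4}}$; once that termwise rate is in hand, the conversion to $\ell$-indexed powers via Weyl's Law and summation are routine.
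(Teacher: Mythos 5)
Your setup coincides with the paper's: you expand the truncation error as $\sum_{\ell>k}\lambda_\ell c_{ij\ell}e_\ell$, bound $\|e_\ell\|_\infty$ by Theorem \ref{thm:uniform.bound}, and convert eigenvalues to indices via Weyl's Law, and you correctly identify the crux, namely converting the \emph{summed} decay of Theorem \ref{thm:triple.products} into something that beats the growing factor $|\lambda_\ell|^{\nicefrac{(d+3)}{4}}$. But neither of your two ways past that crux works as stated. Theorem \ref{thm:triple.products} bounds the whole sum $\sum_{|\lambda_i+\lambda_j|(1+\epsilon)<|\lambda_\ell|}|c_{ij\ell}|$ by $C_{\epsilon,N'}|\lambda_i+\lambda_j|^{-N'}$, a quantity that does not involve $\ell$ at all; termwise it yields only $|c_{ij\ell}|\leq C_{\epsilon,N'}|\lambda_i+\lambda_j|^{-N'}$, constant in $\ell$, so no decay ``faster than any inverse power of $\lambda_\ell$'' can be read off the theorem, and the factor $|\lambda_\ell|^{\nicefrac{(d+3)}{4}}$ is not absorbed.

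Your fallback via smoothness and self-adjointness, $|c_{ij\ell}|=|\lambda_\ell|^{-m}|\langle\Delta_M^m(e_ie_j),e_\ell\rangle|\leq|\lambda_\ell|^{-m}\|\Delta_M^m(e_ie_j)\|_2$, is a correct inequality and does give the displayed bound for each \emph{fixed} pair $(i,j)$, taking $m$ of order $\nicefrac{dN}{2}$. The defect is the constant: $\|\Delta_M^m(e_ie_j)\|_2$ grows like a power of $|\lambda_i|+|\lambda_j|$ of order $m$, hence like a large power of $|\lambda_q|$ when $i,j\leq q$. The lemma is used in Lemma \ref{lem:product.error} summed over all $i,j\leq q$ with $q$ growing along with $r$, and the bound claimed there tolerates only the explicit factor $|\lambda_q|^{\nicefrac{(d-1)}{2}}$ beyond a constant uniform in $i,j$; your constants destroy that uniformity, so what you prove is strictly weaker than what the paper needs (a symptom is that your argument never uses the hypothesis relating $\lambda_i+\lambda_j$ and $\lambda_k$). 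The paper avoids termwise bounds altogether: it keeps the tail sums $C_{ij,q}=\sum_{\ell\geq q}|c_{ij\ell}|$, which Theorem \ref{thm:triple.products} makes rapidly small uniformly in $i,j$ under the gap condition, and performs summation by parts (Abel summation) against the increasing weight $f(\ell)=|\lambda_\ell|\,\|e_\ell\|_\infty$, finishing with Theorem \ref{thm:uniform.bound} and Weyl's Law exactly as you do. If you replace your termwise step with this summation by parts, your argument matches the paper's and retains the uniformity in $i,j$ that the downstream lemmas require.
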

\begin{proof}
  Let $f(q)=\lambda_q\|e_q\|_\infty$.  
  Let $B_q=\nicefrac{|\lambda_q|}{(1+\epsilon)}$.  
  For each $n=1,2,\ldots$,  
  \begin{align*}
    \|\Delta_M(e_ie_j)-(\Delta_M(e_ie_j))_{(k)}\|_\infty
	&=\|\sum_{q=k}^\infty\lambda_qc_{ijq}e_q\|_\infty\leq\sum_{q=k}^\infty f(q)c_{ijq}
  \end{align*}

  Letting $C_{ij,k}=\sum_{q=k}^\infty c_{ijq}$, the last sum can be expanded into

  \begin{align*}
	  f(k)C_{ij,k}+\sum_{q=k}^\infty (f(q+1)-f(q))C_{ij,q+1}
	  &=\mathcal{O}\left(\sum_{q=k}^\infty f(q)B_q^{-M}\right)\\
	  &=\mathcal{O}\left(\sum_{q=k}^\infty q^{\nicefrac{2}{d}}q^{\nicefrac{-M(d+3)}{2d}}\right)
	  =\mathcal{O}\left(\sum_{q=k}^\infty q^{\nicefrac{(4-M(d+3))}{2d}}\right)\\
	  &=\mathcal{O}\left(\sum_{q=k}^{\infty}q^{-\nicefrac{(M-1)}{2d}}\right)
  \end{align*}
  by Theorems \ref{thm:triple.products}, \ref{thm:uniform.bound}, Weyl's Law, and $d\geq 1$ for $M\gg 0$.  
\end{proof}

We can also now bound the error in truncating the gradient of the Laplacian of a product.
The proof is virtually identical, except that we are using Corollary \ref{cor:gradient.bound} in place of Theorem \ref{thm:uniform.bound}.  

\begin{lem}
  \label{lem:gradient.eigenproduct.error}
  For each $N\gg 0$ and all $i,j,k$ such that $0<\lambda_i+\lambda_j<\lambda_k$, 
  $$\|\nabla_M\Delta_M(e_ie_j)-\nabla(\Delta_M(e_ie_j))_{(k)}\|_\infty=\mathcal{O}\left(\sum_{\ell=k}^{\infty}\ell^{-N}\right).$$
\end{lem}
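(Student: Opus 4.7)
The plan is to mimic the proof of Lemma \ref{lem:eigenproduct.error} essentially verbatim, swapping the uniform bound on $\|e_q\|_\infty$ for the uniform bound on $\|\nabla_M e_q\|_\infty$ supplied by Corollary \ref{cor:gradient.bound}. Concretely, I would first expand $\Delta_M(e_ie_j)=\sum_{q=1}^\infty\lambda_q c_{ijq}e_q$ in its Fourier series and differentiate termwise (using that $\nabla_M$ commutes with the eigenprojections in the sense of uniform Sobolev convergence of the partial sums) to obtain
$$\nabla_M\Delta_M(e_ie_j)-\nabla_M(\Delta_M(e_ie_j))_{(k)}=\sum_{q=k}^\infty\lambda_q c_{ijq}\nabla_M e_q.$$
Applying the triangle inequality and Corollary \ref{cor:gradient.bound} then yields the pointwise bound
$$\bigl\|\nabla_M\Delta_M(e_ie_j)-\nabla_M(\Delta_M(e_ie_j))_{(k)}\bigr\|_\infty\;\leq\;C_3\sum_{q=k}^\infty |\lambda_q|^{\nicefrac{(d+5)}{4}}|c_{ijq}|,$$
so the whole task reduces to showing the scalar series on the right is $\mathcal{O}(\sum_{\ell=k}^\infty\ell^{-N})$.

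Next I would carry out the Abel-summation argument exactly as in Lemma \ref{lem:eigenproduct.error}. Set $f(q)=|\lambda_q|^{(d+5)/4}$ and $C_{ij,q}=\sum_{s=q}^\infty|c_{ijs}|$. Rewriting
$$\sum_{q=k}^\infty f(q)|c_{ijq}|=f(k)C_{ij,k}+\sum_{q=k}^\infty(f(q+1)-f(q))C_{ij,q+1},$$
I would invoke Theorem \ref{thm:triple.products} with a fixed $\epsilon$ (legitimate because the hypothesis $0<\lambda_i+\lambda_j<\lambda_k$ plus taking $k$ large enough puts us in the range $|\lambda_i+\lambda_j|(1+\epsilon)<|\lambda_q|$ for $q\geq k$) to dominate each $C_{ij,q+1}$ by $C_{\epsilon,M}|\lambda_{q+1}|^{-M}$ for an arbitrary large $M$. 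Then Weyl's Law converts the $\lambda$-powers to $q$-powers: $|\lambda_q|^{(d+5)/4}=\mathcal{O}(q^{(d+5)/(2d)})$ and $|\lambda_q|^{-M}=\mathcal{O}(q^{-2M/d})$, giving the series bound $\mathcal{O}\bigl(\sum_{q=k}^\infty q^{(d+5)/(2d)-2M/d}\bigr)$.

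Finally, choosing $M$ sufficiently large (depending on $N$ and $d$) makes the exponent less than $-N$, so the tail sum is $\mathcal{O}(\sum_{\ell=k}^\infty \ell^{-N})$ as claimed. The only potentially nontrivial step is justifying termwise differentiation of the Fourier series inside $\|\cdot\|_\infty$; this I would handle by noting that the partial sums of $\Delta_M(e_ie_j)$ converge to $\Delta_M(e_ie_j)$ in the order-$1$ uniform Sobolev norm (already invoked earlier in the paper to assert $\|\nabla f-\nabla f_{(k)}\|_\infty\xra{k\to\infty}0$ for smooth $f$), which is precisely what is needed to pass $\nabla_M$ through the infinite sum. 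Everything else is a direct transcription of the earlier lemma's calculation with $C_2\lambda_q^{(d-1)/4}$ replaced by $C_3\lambda_q^{(d+1)/4}$ in the estimate of the relevant eigenfunction norm, so no new ideas are required beyond this substitution.
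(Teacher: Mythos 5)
Your proposal is correct and follows essentially the same route as the paper: the paper's proof is exactly the Abel-summation argument of Lemma \ref{lem:eigenproduct.error} with Corollary \ref{cor:gradient.bound} substituted for Theorem \ref{thm:uniform.bound}, combined with Theorem \ref{thm:triple.products} and Weyl's Law, just as you describe. Your added justification of termwise differentiation via Sobolev convergence of the partial sums is a detail the paper leaves implicit, but it does not change the argument.
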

\begin{proof}
  For each $n=1,2,\cdots$, 
  \begin{align*}
    \|\nabla\Delta_M(e_ie_j)-\nabla(\Delta_M(e_ie_j))_{(k)}\|_\infty
	&=\|\sum_{q=k}^\infty\lambda_kc_{ijq}e_k\|_\infty\\
	&=\mathcal{O}\left(\sum_{q=k}^\infty|\lambda_{q+1}-\lambda_q|\lambda_q^{-n}\lambda_q^{(d+1)/4}\right)\\
	&=\mathcal{O}\left(\sum_{q=k}^{\infty}q^{d(n-1)/2}\right)
  \end{align*}
  by Theorem \ref{thm:triple.products}, Corollary \ref{cor:gradient.bound}, and Weyl's Law.
  Then
  $$\|\Delta_M(e_ie_j)-(\Delta_M(e_ie_j))_{(k)}\|_\infty=\mathcal{O}\left(\sum_{\ell=k}^{\infty}\ell^{-N}\right)$$
  for $N=\lceil d(n-1)/2\rceil\geqslant 0$.
\end{proof}

\bibliography{geometric-statistics, euclidean-statistics, geometry}
\bibliographystyle{imsart-nameyear}
\end{document}